\documentclass[11pt, oneside, reqno]{amsart}
\usepackage[utf8]{inputenc}
\usepackage{mathrsfs}
\usepackage{bm}
\usepackage{amstext}
\usepackage{amsthm}
\usepackage{amssymb}
\usepackage{mathtools}
\usepackage[all]{xy}
\usepackage{cleveref}
\usepackage{graphics}
\numberwithin{equation}{section}
\numberwithin{figure}{section}
\theoremstyle{plain}
\newtheorem{thm}{Theorem}[section]
  \crefname{thm}{Theorem}{Theorems}
  \newtheorem{lem}[thm]{Lemma}
  \crefname{lem}{Lemma}{Lemmas}
  \newtheorem{prop}[thm]{Proposition}
  \crefname{prop}{Proposition}{Propositions}
  
	\crefname{cor}{Corollary}{Corollaries}
  \newtheorem*{theor1}{Theorem 1}
  
\theoremstyle{definition}
  \newtheorem{defi}[thm]{Definition}
  \crefname{defi}{Definition}{Definitions}
  \theoremstyle{remark}
  
  \crefname{ntn}{Notation}{Notations}
	 \theoremstyle{remark}
  \newtheorem{rem}[thm]{Remark}
  \crefname{rem}{Remark}{Remarks}
  \newtheorem{ex}[thm]{Example}
  \crefname{ex}{Example}{Examples}
  
\usepackage{a4wide}

\def\r{\mathbb{R}}
\def\c{\mathbb{C}}

\def\z{\mathbb{Z}}

\newcommand{\Spec}{\operatorname{Spec}}

\makeatother

\setcounter{tocdepth}{1}%subsectionなどをどこまで表示するか
\makeatletter
\ifcsname phantomsection\endcsname
    \newcommand*{\qrr@gobblenexttocentry}[5]{}
\else
    \newcommand*{\qrr@gobblenexttocentry}[4]{}
\fi
\newcommand*{\addsubsection}{%
    \addtocontents{toc}{\protect\qrr@gobblenexttocentry}%
    \subsection}
\makeatother%acknowledgement消す
 \makeatletter
    
    \@addtoreset{equation}{section}
  \makeatother%数式番号にsection number
\begin{document}
\title[Semi-toric degenerations arising from cluster structures]{Semi-toric degenerations of Richardson varieties arising from cluster structures on flag varieties}

\author{Naoki FUJITA}

\address[Naoki FUJITA]{Graduate School of Mathematical Sciences, The University of Tokyo, 3-8-1 Komaba, Meguro-ku, Tokyo 153-8914, Japan.}

\email{nfujita@ms.u-tokyo.ac.jp}

\subjclass[2010]{Primary 14M15; Secondary 05E10, 13F60, 14M25}

\keywords{Newton--Okounkov body, cluster algebra, flag variety, semi-toric degeneration, Richardson variety}

\thanks{The work of the author was supported by Grant-in-Aid for JSPS Fellows (No.\ 19J00123) and by Grant-in-Aid for Early-Career Scientists (No.\ 20K14281).}

\date{}

\begin{abstract}
A toric degeneration of an irreducible variety is a flat degeneration to an irreducible toric variety. 
In the case of a flag variety, its toric degeneration with desirable properties induces degenerations of Richardson varieties to unions of irreducible closed toric subvarieties, called semi-toric degenerations.
For instance, Morier-Genoud proved that Caldero's toric degenerations arising from string polytopes have this property. 
Semi-toric degenerations are closely related to Schubert calculus. 
Indeed, Kogan--Miller constructed semi-toric degenerations of Schubert varieties from Knutson--Miller's semi-toric degenerations of matrix Schubert varieties which give a geometric proof of the pipe dream formula of Schubert polynomials. 
In this paper, we focus on a toric degeneration of a flag variety arising from a cluster structure, and prove that it induces semi-toric degenerations of Richardson varieties. 
Our semi-toric degeneration can be regarded as a generalization of Morier-Genoud's and Kogan--Miller's semi-toric degenerations. 
\end{abstract}
\maketitle
\tableofcontents 
\section{Introduction}\label{s:section}
A toric degeneration of an irreducible variety is a flat degeneration to an irreducible toric variety. 
In the case of a flag variety, its toric degeneration has been studied from various points of view such as standard monomial theory \cite{Chi, GL}, string parametrizations of dual canonical bases \cite{AB, Cal}, PBW (Poincar\'e--Birkhoff--Witt) filtrations \cite{FeFL1, FeFL2, FeFL3}, and so on; see \cite{FaFL} for a survey on this topic. 
A toric degeneration of a flag variety with desirable properties induces degenerations of Richardson varieties to unions of irreducible closed toric subvarieties, called semi-toric degenerations.
For instance, Morier-Genoud \cite{Mor} proved that Caldero's toric degenerations \cite{Cal} arising from string polytopes have this property. 
Semi-toric degenerations are closely related to Schubert calculus. 
Indeed, Kogan--Miller \cite{KoM} constructed semi-toric degenerations of Schubert varieties from Knutson--Miller's semi-toric degenerations \cite{KnM} of matrix Schubert varieties which give a geometric proof of the pipe dream formula of Schubert polynomials. 
The author \cite{Fuj2} showed that Morier-Genoud's semi-toric degeneration can be regarded as a generalization of Kogan--Miller's semi-toric degeneration.

A systematic method of constructing toric degenerations is given by the theory of Newton--Okounkov bodies (see \cite{And, HK}). 
A Newton--Okounkov body $\Delta(Z, \mathcal{L}, v)$ is a convex body constructed from a projective variety $Z$ with a line bundle $\mathcal{L}$ on $Z$ generated by global sections and with a higher rank valuation $v$ on the function field $\mathbb{C}(Z)$, which was introduced by Okounkov \cite{Oko1, Oko2, Oko3}, and afterward developed independently by Lazarsfeld--Mustata \cite{LM} and by Kaveh--Khovanskii \cite{KK1}. 
In the case of flag varieties, their Newton--Okounkov bodies realize the following representation-theoretic polytopes:
\begin{enumerate}
\item[{\rm (i)}] Berenstein--Littelmann--Zelevinsky's string polytopes \cite{Kav},
\item[{\rm (ii)}] Nakashima--Zelevinsky polytopes \cite{FN},
\item[{\rm (iii)}] Feigin--Fourier--Littelmann--Vinberg polytopes \cite{FeFL3, Kir},
\end{enumerate}
which all induce toric degenerations. 
In the present paper, we focus on Newton--Okounkov bodies of flag varieties arising from cluster structures. 

The theory of cluster algebras was originally introduced by Fomin--Zelevinsky \cite{FZ:ClusterI, FZ:ClusterIV} to develop a combinatorial approach to total positivity in reductive groups and to Lusztig's dual canonical basis. 
Fock--Goncharov \cite{FG} introduced a pair $(\mathcal{A}, \mathcal{X})$ of cluster varieties, called a cluster ensemble, which gives a more geometric point of view to the theory of cluster algebras. 
Gross--Hacking--Keel--Kontsevich \cite{GHKK} developed the theory of cluster ensembles using methods in mirror symmetry, and showed that the theory of cluster algebras can be used to construct toric degenerations of projective varieties.
The author and Oya \cite{FO2} gave another approach to toric degenerations by relating the theory of cluster algebras with Newton--Okounkov bodies, and constructed a family of toric degenerations of flag varieties using cluster structures. 
Our aim of the present paper is to prove that the toric degenerations in this family induce semi-toric degenerations of Richardson varieties.

To state our result more explicitly, let $G$ be a simply-connected semisimple algebraic group over $\mathbb{C}$, $B \subseteq G$ a Borel subgroup, $P_+$ the set of dominant integral weights, and $P_{++}$ the set of regular dominant integral weights. 
For $\lambda \in P_+$, denote by $\mathcal{L}_\lambda$ the corresponding line bundle on $G/B$ which is generated by global sections. 
Let $W$ be the Weyl group, $w_0 \in W$ the longest element, and $\leq$ the Bruhat order on $W$.
For $w \in W$, we denote by $U_w ^- \subseteq G$ the associated unipotent cell.
Berenstein--Fomin--Zelevinsky \cite{BFZ} proved that the coordinate ring $\c[U_w ^-]$ admits an upper cluster algebra structure. 
Note that the unipotent cell $U_{w_0} ^-$ associated with $w_0$ is naturally thought of as an open subvariety of the full flag variety $G/B$. 
Hence we have $\c(G/B) \simeq \c(U_{w_0} ^-)$, and the upper cluster algebra $\c[U_{w_0} ^-]$ is identified with a $\c$-subalgebra of the function field $\c(G/B)$.
Using this identification, the author and Oya \cite{FO2} constructed for $\lambda \in P_+$ a family $\{\Delta(G/B, \mathcal{L}_\lambda, v_{\mathbf s})\}_{{\mathbf s} \in \mathcal{S}}$ of Newton--Okounkov bodies parametrized by the set $\mathcal{S}$ of Fomin--Zelevinsky seeds for $\c[U_{w_0} ^-]$ such that 
\begin{itemize}
\item this family contains string polytopes and Nakashima--Zelevinsky polytopes up to unimodular transformations;
\item $\Delta(G/B, \mathcal{L}_\lambda, v_{\mathbf s})$, ${\mathbf s} \in \mathcal{S}$, are all rational convex polytopes;
\item $\Delta(G/B, \mathcal{L}_\lambda, v_{\mathbf s})$, ${\mathbf s} \in \mathcal{S}$, are all related by tropicalized cluster mutations;
\item for $\lambda \in P_{++}$ and ${\mathbf s} \in \mathcal{S}$, there exists a toric degeneration of $G/B$ to the irreducible normal projective toric variety $X(\Delta(G/B, \mathcal{L}_\lambda, v_{\mathbf s}))$ corresponding to $\Delta(G/B, \mathcal{L}_\lambda, v_{\mathbf s})$.
\end{itemize}
See Theorems \ref{t:NO_body_parametrized_by_seeds}, \ref{t:NO_body_parametrized_by_seeds_string}, \ref{t:NO_body_parametrized_by_seeds_NZ} for more details. 
For $v, w \in W$ such that $v \leq w$, we denote by $X_w^v \subseteq G/B$ the corresponding Richardson variety.
The following is the main result of the present paper.

\begin{theor1}[{see \cref{t:main_result_semi-toric}}]
Let $\lambda \in P_{++}$, ${\mathbf s} \in \mathcal{S}$, and $v, w \in W$ such that $v \leq w$.
Then, under the toric degeneration of $G/B$ to $X(\Delta(G/B, \mathcal{L}_\lambda, v_{\mathbf s}))$, the Richardson variety $X_w^v$ degenerates to a union of irreducible closed toric subvarieties of $X(\Delta(G/B, \mathcal{L}_\lambda, v_{\mathbf s}))$.
\end{theor1}

The semi-toric degenerations in Theorem 1 can be regarded as a generalization of Morier-Genoud's and Kogan--Miller's semi-toric degenerations. 

This paper is organized as follows. 
In Section \ref{s:NO_bodies}, we review the definition of Newton--Okounkov bodies.
The construction of toric degenerations given in \cite{And, HK} is also explained.
In Section \ref{s:crystal_bases}, we recall some basic facts on crystal bases, string polytopes, and Nakashima--Zelevinsky polytopes. 
We also review results by Morier-Genoud \cite{Mor} in this section.
In Section \ref{s:Cluster_valuation}, we recall some basic definitions on cluster algebras. 
We also see a construction of valuations using cluster structures, following \cite{FO2}.  
In Section \ref{s:unipotent_cell}, we review cluster structures on unipotent cells, and recall results in \cite{FO2}.  
In Section \ref{s:main_result}, we prove Theorem 1 above.

\section{Newton--Okounkov bodies}\label{s:NO_bodies}

In this section, we review some basic definitions and facts on Newton--Okounkov bodies, following \cite{And, HK, Kav, KK1, KK2}. 
Let $R$ be a $\mathbb{C}$-algebra without nonzero zero-divisors. 
We take $m \in \mathbb{Z}_{>0}$ and fix a total order $\leq$ on $\mathbb{Z}^m$ respecting the addition. 

\begin{defi}
A map $v \colon R \setminus \{0\} \rightarrow \mathbb{Z}^m$ is called a \emph{valuation} on $R$ if the following conditions hold: 
for $\sigma, \tau \in R \setminus \{0\}$ and $c \in \mathbb{C}^\times \coloneqq \mathbb{C} \setminus \{0\}$,
\begin{enumerate}
\item[{\rm (i)}] $v(\sigma \cdot \tau) = v(\sigma) + v(\tau)$,
\item[{\rm (ii)}] $v(c \cdot \sigma) = v(\sigma)$, 
\item[{\rm (iii)}] $v (\sigma + \tau) \geq \min\{v(\sigma), v(\tau)\}$ unless $\sigma + \tau = 0$. 
\end{enumerate}
\end{defi}

The following is a fundamental property of valuations.

\begin{prop}[{see, for instance, \cite[Proposition 1.8]{Kav}}]\label{p:property_valuation}
Let $v$ be a valuation on $R$, and take $\sigma_1, \ldots, \sigma_k \in R \setminus \{0\}$ such that $v(\sigma_1), \ldots, v(\sigma_k)$ are all distinct. 
Then, for $c_1, \ldots, c_k \in \mathbb{C}$ such that $\sigma \coloneqq c_1 \sigma_1 + \cdots + c_k \sigma_k \neq 0$, it holds that
\[v(\sigma) = \min\{v(\sigma_\ell) \mid 1 \leq \ell \leq k,\ c_\ell \neq 0 \}.\]
\end{prop}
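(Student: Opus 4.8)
The plan is to reduce everything to the two-term case and then induct on the number of nonzero coefficients.

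\emph{Step 1: a two-term refinement of axiom (iii).} I would first show that if $\sigma, \tau \in R \setminus \{0\}$ satisfy $v(\sigma) \neq v(\tau)$ and $\sigma + \tau \neq 0$, then $v(\sigma + \tau) = \min\{v(\sigma), v(\tau)\}$. Assume without loss of generality $v(\sigma) < v(\tau)$. Axiom (iii) gives $v(\sigma + \tau) \geq v(\sigma)$, so it suffices to rule out $v(\sigma + \tau) > v(\sigma)$. If this held, then from the identity $\sigma = (\sigma + \tau) + (-\tau)$, axiom (iii), and $v(-\tau) = v(\tau)$ (axiom (ii) with $c = -1$), we would obtain $v(\sigma) \geq \min\{v(\sigma + \tau), v(\tau)\} > v(\sigma)$, a contradiction. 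This ``subtraction trick'' is the only non-formal ingredient of the whole proof.

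\emph{Step 2: induction.} Discarding the indices $\ell$ with $c_\ell = 0$ and relabelling, I may assume $c_1, \ldots, c_n \in \mathbb{C}^\times$ and $v(\sigma_1) < v(\sigma_2) < \cdots < v(\sigma_n)$, which is possible precisely because the $v(\sigma_\ell)$ are pairwise distinct. Setting $\tau_i \coloneqq c_i \sigma_i$, so that $v(\tau_i) = v(\sigma_i)$ by axiom (ii), it remains to prove $v(\tau_1 + \cdots + \tau_n) = v(\tau_1)$. I argue by induction on $n$, the case $n = 1$ being axiom (ii). For $n \geq 2$, write $\sigma = \tau_1 + \rho$ with $\rho \coloneqq \tau_2 + \cdots + \tau_n$. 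If $\rho = 0$, then $\sigma = \tau_1$ and $v(\sigma) = v(\tau_1)$ by axiom (ii). If $\rho \neq 0$, the induction hypothesis applied to $\tau_2, \ldots, \tau_n$ (whose valuations are again strictly increasing) gives $v(\rho) = v(\tau_2) > v(\tau_1)$; since $\sigma \neq 0$ by assumption, Step 1 applies to $\sigma = \tau_1 + \rho$ and yields $v(\sigma) = \min\{v(\tau_1), v(\rho)\} = v(\tau_1)$, which after undoing the relabelling is exactly $\min\{v(\sigma_\ell) \mid 1 \leq \ell \leq k,\ c_\ell \neq 0\}$.

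I do not anticipate a genuine obstacle: this is the standard first property of higher-rank valuations and the argument above is elementary. The only point deserving care is that the hypothesis that the $v(\sigma_\ell)$ be pairwise distinct is truly needed — it is what guarantees $v(\tau_1) \neq v(\rho)$ in Step 2, so that the sharp equality of Step 1 is available — and without it both the statement and this proof break down.
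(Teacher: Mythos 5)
Your proof is correct, and it is the standard argument: the sharp two-term equality obtained from axiom (iii) via the subtraction trick $\sigma = (\sigma+\tau) + (-\tau)$, followed by induction on the number of nonzero terms after ordering the values. The paper itself gives no proof of this proposition (it simply cites \cite[Proposition 1.8]{Kav}), and your argument is precisely the one found in that reference, so there is nothing further to reconcile.
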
 

For ${\bm a} \in \z^m$ and a valuation $v \colon R \setminus \{0\} \rightarrow \mathbb{Z}^m$, set 
\[R_{\bm a} \coloneqq \{\sigma \in R \setminus \{0\} \mid v(\sigma) \geq {\bm a}\} \cup \{0\},\]
which is a $\c$-subspace of $R$. 
Then the \emph{leaf} above ${\bm a} \in \z^m$ is defined as the quotient space $R[{\bm a}] \coloneqq R_{\bm a}/\bigcup_{{\bm a} < {\bm b}} R_{\bm b}$. 
We say that a valuation $v$ has \emph{1-dimensional leaves} if $\dim_\c(R[{\bm a}]) = 0\ {\rm or}\ 1$ for all ${\bm a} \in \mathbb{Z}^m$. 

\begin{ex}\label{ex:lowest_term_valuation}
Let $\mathbb{C}(t_1, \ldots, t_m)$ be the field of rational functions in $m$ variables. 
The fixed total order $\leq$ on $\mathbb{Z}^m$ induces a total order (denoted by the same symbol $\leq$) on the set of Laurent monomials in $t_1, \ldots, t_m$ as follows: 
\begin{center}
$t_1 ^{a_1} \cdots t_m ^{a_m} \leq t_1 ^{a_1 ^\prime} \cdots t_m ^{a_m ^\prime}$ if and only if $(a_1, \ldots, a_m) \leq (a_1 ^\prime, \ldots, a_m ^\prime)$. 
\end{center}
We define a map $v^{\rm low}_{\leq} \colon \mathbb{C}(t_1, \ldots, t_m) \setminus \{0\} \rightarrow \mathbb{Z}^m$ by
\begin{itemize}
\item $v^{\rm low} _{\leq} (f) \coloneqq (a_1, \ldots, a_m)$ for
\[f = c t_1 ^{a_1} \cdots t_m ^{a_m} + ({\rm higher\ terms}) \in \mathbb{C}[t_1, \ldots, t_m] \setminus \{0\},\]
where $c \in \mathbb{C}^\times$, and the summand ``(higher terms)'' stands for a linear combination of monomials bigger than $t_1 ^{a_1} \cdots t_m ^{a_m}$ with respect to $\leq$;
\item $v^{\rm low}_{\leq} (f/g) \coloneqq v^{\rm low} _{\leq} (f) - v^{\rm low} _{\leq} (g)$ for $f, g \in \mathbb{C}[t_1, \ldots, t_m] \setminus \{0\}$.
\end{itemize}
Then it follows that $v^{\rm low} _{\leq}$ is a valuation with respect to the total order $\leq$, which has $1$-dimensional leaves.
It is called the \emph{lowest term valuation} with respect to $\leq$.
\end{ex}

\begin{defi}[{see \cite[Section 1.2]{Kav} and \cite[Definition 1.10]{KK2}}]\label{d:Newton--Okounkov body}
Let $Z$ be an irreducible normal projective variety over $\mathbb{C}$ of complex dimension $m$, and $\mathcal{L}$ a line bundle on $Z$ generated by global sections. 
We fix a valuation $v \colon \mathbb{C}(Z) \setminus \{0\} \rightarrow \mathbb{Z}^m$ with $1$-dimensional leaves and a nonzero section $\tau \in H^0 (Z, \mathcal{L})$. 
Define a subset $S(Z, \mathcal{L}, v, \tau) \subseteq \mathbb{Z}_{>0} \times \mathbb{Z}^m$ by 
\[S(Z, \mathcal{L}, v, \tau) \coloneqq \bigcup_{k \in \z_{>0}} \{(k, v(\sigma/\tau^k)) \mid \sigma \in H^0(Z, \mathcal{L}^{\otimes k}) \setminus \{0\}\},\] 
and denote by $C(Z, \mathcal{L}, v, \tau) \subseteq \mathbb{R}_{\geq 0} \times \mathbb{R}^m$ the smallest real closed cone containing $S(Z, \mathcal{L}, v, \tau)$. 
Then we define a subset $\Delta(Z, \mathcal{L}, v, \tau) \subseteq \mathbb{R}^m$ by 
\[\Delta(Z, \mathcal{L}, v, \tau) \coloneqq \{{\bm a} \in \r^m \mid (1, {\bm a}) \in C(Z, \mathcal{L}, v, \tau)\},\] 
which is called the \emph{Newton--Okounkov body} of $(Z, \mathcal{L})$ associated with $v$ and $\tau$. 
\end{defi}

By the definition of valuations, we see that $S(Z, \mathcal{L}, v, \tau)$ is a semigroup. 
Hence the closed cone $C(Z, \mathcal{L}, v, \tau)$ and the set $\Delta(Z, \mathcal{L}, v, \tau)$ are both convex.
In addition, it follows by \cite[Theorem 2.30]{KK2} that $\Delta(Z, \mathcal{L}, v, \tau)$ is a convex body, i.e., a compact convex set. 
If $\mathcal{L}$ is very ample, then we see by \cite[Corollary 3.2]{KK2} that the real dimension of $\Delta(Z, \mathcal{L}, v, \tau)$ coincides with $m$.
If the semigroup $S(Z, \mathcal{L}, v, \tau)$ is finitely generated, then the Newton--Okounkov body $\Delta(Z, \mathcal{L}, v, \tau)$ is a rational convex polytope.
If the semigroup $S(Z, \mathcal{L}, v, \tau)$ is saturated in addition, that is, if for all $k \in \mathbb{Z}_{> 0}$ and ${\bm a} \in \mathbb{Z}^m$, $k {\bm a} \in S(Z, \mathcal{L}, v, \tau)$ implies ${\bm a} \in S(Z, \mathcal{L}, v, \tau)$, then ${\rm Proj} (\c[S(Z, \mathcal{L}, v, \tau)])$ is normal and isomorphic to the irreducible normal projective toric variety $X(\Delta(Z, \mathcal{L}, v, \tau))$ corresponding to the rational convex polytope $\Delta(Z, \mathcal{L}, v, \tau)$, where we take ``${\rm Proj}$'' for the natural $\z_{\geq 0}$-grading of $\c[S(Z, \mathcal{L}, v, \tau)]$ induced from the $\z_{>0}$-grading of $S(Z, \mathcal{L}, v, \tau)$.

\begin{rem}
Let $\tau^\prime \in H^0 (Z, \mathcal{L})$ be another nonzero section. 
Then it holds that $\Delta(Z, \mathcal{L}, v, \tau^\prime) = \Delta(Z, \mathcal{L}, v, \tau) + v(\tau/\tau^\prime)$. 
Thus, the Newton--Okounkov body $\Delta(Z, \mathcal{L}, v, \tau)$ is independent of the choice of a nonzero section $\tau \in H^0 (Z, \mathcal{L})$ up to translations by integer vectors. 
Hence it is also denoted simply by $\Delta(Z, \mathcal{L}, v)$.
\end{rem} 

We say that $Z$ admits a \emph{flat degeneration} to a variety $Z_0$ if there exists a flat morphism 
\[\pi \colon \mathfrak{X} \rightarrow \Spec(\c[z])\] 
of schemes such that the scheme-theoretic fiber $\pi^{-1}(z)$ (resp., $\pi^{-1}(0)$) over a closed point $z \in \c^\times$ (resp., the origin $0 \in \c$) is isomorphic to $Z$ (resp., $Z_0$). 
If $Z_0$ is a toric variety, then this degeneration is called a \emph{toric degeneration}.

\begin{thm}[{see \cite[Theorem 1]{And} and \cite[Corollary 3.14]{HK}}]\label{t:toric_deg}
Assume that $\mathcal{L}$ is very ample. 
If the semigroup $S(Z, \mathcal{L}, v, \tau)$ is finitely generated and saturated, then there exists a toric degeneration of $Z$ to the irreducible normal projective toric variety $X(\Delta(Z, \mathcal{L}, v, \tau))$ corresponding to the rational convex polytope $\Delta(Z, \mathcal{L}, v, \tau)$.
\end{thm}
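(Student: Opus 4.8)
The plan is to realize the toric degeneration as a relative $\operatorname{Proj}$ over $\mathbb{A}^1 = \Spec(\c[z])$ of a Rees algebra attached to the filtration that $v$ induces on the section ring of $(Z,\mathcal{L})$. Since $\mathcal{L}$ is very ample, the section ring $R \coloneqq \bigoplus_{k \geq 0} H^0(Z, \mathcal{L}^{\otimes k})$ is a finitely generated integral domain over $\c$ with $\operatorname{Proj}(R) \cong Z$. Fixing the nonzero section $\tau$, the rule sending a homogeneous element $\sigma \in H^0(Z,\mathcal{L}^{\otimes k})$ to $(k, v(\sigma/\tau^k)) \in \z_{>0}\times\z^m$ defines a filtration of $R$ by the total order on $\z_{>0}\times\z^m$; because $v$ has $1$-dimensional leaves and is multiplicative, the associated graded algebra $\operatorname{gr}_v R$ is canonically isomorphic, as a graded algebra, to the semigroup algebra $\c[S]$, where $S \coloneqq S(Z,\mathcal{L},v,\tau)$.

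The next, and in my view most delicate, step is to replace the $\z^m$-valued filtration by an honest $\z_{\geq 0}$-valued one with the same associated graded, so that a genuine Rees algebra over $\c[z]$ can be built. Here I would use that $S$ is finitely generated: choose homogeneous elements $f_1, \ldots, f_r \in R$ whose $v$-values generate $S$, so that they form a Khovanskii basis and the presentation $R \cong \c[x_1, \ldots, x_r]/\ker\varphi$, $x_i \mapsto f_i$, satisfies $\operatorname{gr}_v R \cong \c[x_1, \ldots, x_r]/I_S \cong \c[S]$, where $I_S$ denotes the toric ideal of $S$. One then picks an integer weight vector $w \in \z_{>0}^r$ with $\operatorname{in}_w(\ker\varphi) = I_S$; such a $w$ exists because this condition amounts to finitely many strict linear inequalities on $w$ (one for each pair of monomials compared in a Gr\"obner basis of $\ker\varphi$ for the $v$-order), which can be solved by a lexicographic construction from the weights of $v$ followed by a rational perturbation. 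The induced $w$-grading descends to an exhaustive $\z_{\geq 0}$-filtration $\c = F_0 \subseteq F_1 \subseteq \cdots$ of $R$ with $\operatorname{gr}_F R \cong \c[x_1, \ldots, x_r]/\operatorname{in}_w(\ker\varphi) = \c[S]$. I expect the verification that $w$ can indeed be chosen so that flattening the term order to a single $\z$-grading leaves the associated graded unchanged to be the main obstacle; this is precisely the point at which finite generation of $S$ enters essentially.

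Finally I would form the Rees algebra $\widetilde{R} \coloneqq \bigoplus_{n \geq 0} F_n z^n \subseteq R[z]$: it is an integral domain carrying the $\z_{\geq 0}$-grading inherited from $R$, with $\widetilde{R}_0 = \c[z]$, $\widetilde{R}/z\widetilde{R} \cong \operatorname{gr}_F R \cong \c[S]$, and $\widetilde{R}[z^{-1}] \cong R[z^{-1}]$. It is finitely generated over $\c[z]$ since $\operatorname{gr}_F R \cong \c[S]$ is, and flat over $\c[z]$ since it is torsion-free over the principal ideal domain $\c[z]$. Taking $\mathfrak{X} \coloneqq \operatorname{Proj}(\widetilde{R})$ with respect to the $R$-grading then yields a projective flat morphism $\pi \colon \mathfrak{X} \to \Spec(\c[z])$ such that $\pi^{-1}(z_0) \cong \operatorname{Proj}(\widetilde{R}/(z - z_0)\widetilde{R}) \cong \operatorname{Proj}(R) \cong Z$ for $z_0 \in \c^\times$, and $\pi^{-1}(0) \cong \operatorname{Proj}(\c[S])$ with its natural $\z_{>0}$-grading. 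Since $S$ is finitely generated and saturated, the discussion preceding the statement identifies $\operatorname{Proj}(\c[S])$ with the irreducible normal projective toric variety $X(\Delta(Z,\mathcal{L},v,\tau))$, and $\c[S]$ being a domain makes this special fiber irreducible. This exhibits the desired toric degeneration; the remaining points are only the routine verifications of the fiber identifications and of the compatibility of the gradings, all of which follow from standard properties of Rees algebras and of $\operatorname{Proj}$.
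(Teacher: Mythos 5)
Your proposal is correct and follows essentially the same route the paper takes: it cites \cite{And, HK} and sketches precisely this construction, namely applying the Rees algebra construction to the filtration on $R(Z,\mathcal{L})$ induced by $v$, after reducing the $\z^m$-valued filtration to a $\z_{\geq 0}$-filtration (the step you rightly flag as delicate, handled in the proofs of \cite[Proposition 3]{And} and \cite[Theorem 3.13]{HK} exactly via finite generation of $S(Z,\mathcal{L},v,\tau)$), with saturation then identifying the special fiber ${\rm Proj}(\c[S(Z,\mathcal{L},v,\tau)])$ with $X(\Delta(Z,\mathcal{L},v,\tau))$. No genuine gap beyond the routine verifications you mention.
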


More strongly, a concrete construction of the toric degeneration is given in the papers \cite{And, HK}.
Roughly speaking, the corresponding flat morphism $\pi \colon \mathfrak{X} \rightarrow \Spec(\c[z])$ is obtained by applying the \emph{Rees algebra construction} to a $\z_{\geq 0} \times \z^m$-filtration, given by the valuation $v$, on the $\z_{\geq 0}$-graded ring 
\[R(Z, \mathcal{L}) \coloneqq \bigoplus_{k \in \z_{\geq 0}} H^0(Z, \mathcal{L}^{\otimes k}).\]
Although the Rees algebra construction is a method for a $\z_{\geq 0}$-filtered algebra, we can reduce the problem to a $\z_{\geq 0}$-filtration on $R(Z, \mathcal{L})$; see the proofs of \cite[Proposition 3]{And} and \cite[Theorem 3.13]{HK} for more details.
Throughout the present paper, we always assume that a toric degeneration for a Newton--Okounkov body is constructed in this way.

\section{Crystal bases and their polyhedral parametrizations}\label{s:crystal_bases}

\subsection{Basic definitions on crystal bases}

In this subsection, we review some basic definitions and facts on crystal bases, following \cite{Kas2, Kas3, Kas4}. 
Let $G$ be a connected, simply-connected semisimple algebraic group over $\mathbb{C}$, $\mathfrak{g}$ its Lie algebra, and $H \subseteq G$ a maximal torus.
We denote by $\mathfrak{h} \subseteq \mathfrak{g}$ the Lie algebra of $H$, by $\mathfrak{h}^\ast \coloneqq {\rm Hom}_\mathbb{C} (\mathfrak{h}, \mathbb{C})$ its dual space, and by $\langle \cdot, \cdot \rangle \colon \mathfrak{h}^\ast \times \mathfrak{h} \rightarrow \mathbb{C}$ the canonical pairing. 
Let $P \subseteq \mathfrak{h}^\ast$ be the weight lattice, $I = \{1, 2, \ldots, n\}$ an index set for the vertices of the Dynkin diagram, $\{\alpha_i \mid i \in I\} \subseteq P$ the set of simple roots, and $\{h_i \mid i \in I\} \subseteq \mathfrak{h}$ the set of simple coroots.

\begin{defi}[{\cite[Definition 1.2.1]{Kas4}}]
A \emph{crystal} is a set $\mathcal{B}$ equipped with maps 
\begin{itemize}
\item ${\rm wt} \colon \mathcal{B} \rightarrow P$, 
\item $\varepsilon_i \colon \mathcal{B} \rightarrow \mathbb{Z} \cup \{-\infty\}$ and $\varphi_i \colon \mathcal{B} \rightarrow \mathbb{Z} \cup \{-\infty\}$ for $i \in I$,
\item $\tilde{e}_i \colon \mathcal{B} \rightarrow \mathcal{B} \cup \{0\}$ and $\tilde{f}_i \colon \mathcal{B} \rightarrow \mathcal{B} \cup \{0\}$ for $i \in I$,
\end{itemize}
where $-\infty$ (resp., $0$) is an additional element which is not included in $\mathbb{Z}$ (resp., $\mathcal{B}$), such that the following conditions hold: for $i \in I$ and $b, b^\prime \in \mathcal{B}$,
\begin{enumerate}
\item[(i)] $\varphi_i(b) = \varepsilon_i(b) + \langle{\rm wt}(b), h_i \rangle$,
\item[(ii)] ${\rm wt}(\tilde{e}_i b) = {\rm wt}(b) + \alpha_i$, $\varepsilon_i (\tilde{e}_i b) = \varepsilon_i (b) - 1$, and $\varphi_i(\tilde{e}_i b) = \varphi_i(b) +1$ if $\tilde{e}_i b \in \mathcal{B}$,
\item[(iii)] ${\rm wt}(\tilde{f}_i b) = {\rm wt}(b) - \alpha_i$, $\varepsilon_i (\tilde{f}_i b) = \varepsilon_i(b) +1$, and $\varphi_i(\tilde{f}_i b) = \varphi_i(b) - 1$ if $\tilde{f}_i b \in \mathcal{B}$,
\item[(iv)] $b^\prime = \tilde{e}_i b$ if and only if $b = \tilde{f}_i b^\prime$,
\item[(v)] $\tilde{e}_i b = \tilde{f}_i b = 0$ if $\varphi_i(b) = -\infty$.
\end{enumerate}
\end{defi}

\begin{defi}[{see \cite[Section 1.2]{Kas4}}]
For crystals $\mathcal{B}_1$ and $\mathcal{B}_2$, a map $\psi \colon \mathcal{B}_1 \cup \{0\} \rightarrow \mathcal{B}_2 \cup \{0\}$ is called a \emph{strict morphism} of crystals from $\mathcal{B}_1$ to $\mathcal{B}_2$ if it satisfies the following conditions:
\begin{enumerate}
\item[(i)] $\psi(0) = 0$,
\item[(ii)] ${\rm wt}(\psi(b)) = {\rm wt}(b)$, $\varepsilon_i(\psi(b)) = \varepsilon_i(b)$, and $\varphi_i(\psi(b)) = \varphi_i(b)$ for $i \in I$ and $b \in \mathcal{B}_1$ such that $\psi(b) \in \mathcal{B}_2$,
\item[(iii)] $\tilde{e}_i \psi(b) = \psi(\tilde{e}_i b)$ and $\tilde{f}_i \psi(b) = \psi(\tilde{f}_i b)$ for $i \in I$ and $b \in \mathcal{B}_1$,
\end{enumerate}
where we set $\tilde{e}_i \psi(b) = \tilde{f}_i \psi(b) = 0$ if $\psi(b) = 0$. 
If $\psi \colon \mathcal{B}_1 \cup \{0\} \rightarrow \mathcal{B}_2 \cup \{0\}$ is injective in addition, then we call it a \emph{strict embedding} of crystals. 
\end{defi}

For a Borel subgroup $B \subseteq G$ containing $H$, the quotient space $G/B$ is called the \emph{full flag variety}, which is a nonsingular projective variety. 
Let $P_+ \subseteq P$ be the set of dominant integral weights, $P_{++} \subseteq P_+$ the set of regular dominant integral weights, and $c_{i, j} \coloneqq \langle \alpha_j, h_i \rangle$ the Cartan integer for $i, j \in I$.
For $\lambda \in P_+$, we denote by $V(\lambda)$ the irreducible highest weight $G$-module over $\c$ with highest weight $\lambda$ and with highest weight vector $v_{\lambda}$. 
For $\lambda \in P_+$, define a line bundle $\mathcal{L}_\lambda$ on $G/B$ by
\[\mathcal{L}_\lambda \coloneqq (G \times \mathbb{C})/B,\] 
where the right $B$-action on $G \times \mathbb{C}$ is given by $(g, c) \cdot b \coloneqq (g b, \lambda(b) c)$ for $g \in G$, $c \in \mathbb{C}$, and $b \in B$. 
For a closed subvariety $Z \subseteq G/B$, the restriction of $\mathcal{L}_\lambda$ to $Z$ is also denoted by the same symbol $\mathcal{L}_\lambda$. 

\begin{prop}[{see, for instance, \cite[Sections I\hspace{-.1em}I.4.4 and I\hspace{-.1em}I.8.5]{Jan}}]
For $\lambda \in P_+$, the line bundle $\mathcal{L}_\lambda$ on $G/B$ is generated by global sections. 
In addition, it is very ample if and only if $\lambda \in P_{++}$.
\end{prop}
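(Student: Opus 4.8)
The plan is to reduce the statement to the Borel--Weil description of the spaces of global sections together with an analysis of the canonical morphism to a projective space. By the Borel--Weil theorem, for $\lambda \in P_+$ the $G$-module $H^0(G/B, \mathcal{L}_\lambda)$ is nonzero and irreducible, isomorphic to $V(\lambda)^\ast$ with the conventions fixed above; in fact one needs only the weaker assertion that $H^0(G/B, \mathcal{L}_\lambda) \neq 0$, which can also be checked directly by exhibiting the $B$-eigensection attached to an extremal weight vector and verifying that it does not vanish at the base point $eB$.

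First I would settle global generation. Since $\mathcal{L}_\lambda$ is a $G$-equivariant line bundle on the homogeneous space $G/B$, its base locus is a $G$-stable closed subvariety, hence either empty or all of $G/B$; as $H^0(G/B, \mathcal{L}_\lambda) \neq 0$, the base locus is empty and $\mathcal{L}_\lambda$ is generated by global sections. Global generation yields a $G$-equivariant morphism $\Phi_\lambda \colon G/B \to \mathbb{P}(H^0(G/B, \mathcal{L}_\lambda)^\ast) \cong \mathbb{P}(V(\lambda))$ with $\Phi_\lambda^\ast \mathcal{O}(1) \cong \mathcal{L}_\lambda$, given explicitly by $gB \mapsto [g \cdot v_\lambda]$. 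Since $G/B$ is proper, $\mathcal{L}_\lambda$ is very ample if and only if $\Phi_\lambda$ is a closed immersion (an immersion of a proper variety into a separated scheme being automatically a closed immersion, and the complete linear system dominating every subsystem). For $\lambda \in P_{++}$ I would verify the two conditions forcing a proper morphism to be a closed immersion: that $\Phi_\lambda$ is injective on $\mathbb{C}$-points, because the stabilizer in $G$ of the line $\mathbb{C} v_\lambda$ is the standard parabolic $P_{J(\lambda)}$ with $J(\lambda) \coloneqq \{i \in I \mid \langle \lambda, h_i \rangle = 0\}$ and $P_{J(\lambda)} = B$ exactly when $J(\lambda) = \emptyset$; and that $\Phi_\lambda$ is unramified, for which by $G$-equivariance it suffices to show that the differential at $eB$, namely the map $\mathfrak{g}/\mathfrak{b} \cong \bigoplus_{\alpha > 0} \mathfrak{g}_{-\alpha} \to V(\lambda)/\mathbb{C} v_\lambda$ sending $\xi$ to the class of $\xi \cdot v_\lambda$, is injective. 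As the images $e_{-\alpha} v_\lambda$ land in pairwise distinct weight spaces, this last point reduces to $e_{-\alpha} v_\lambda \neq 0$ for every positive root $\alpha$, which for $\lambda \in P_{++}$ one proves by induction on the height of $\alpha$, starting from $e_{-\alpha_i} v_\lambda \neq 0$ (valid because $\langle \lambda, h_i \rangle \geq 1$) and using $\mathfrak{sl}_2$-representation theory. A proper monomorphism being a closed immersion, this shows $\mathcal{L}_\lambda$ is very ample.

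For the converse, suppose $\lambda \in P_+ \setminus P_{++}$, so that $J(\lambda) \neq \emptyset$. Then $\Phi_\lambda$ is constant along the cosets of $P_{J(\lambda)}$ and hence factors through the projection $G/B \twoheadrightarrow G/P_{J(\lambda)}$, whose fibers $P_{J(\lambda)}/B$ are positive-dimensional; thus $\Phi_\lambda$ is not injective, the complete linear system of $\mathcal{L}_\lambda$ fails to embed $G/B$, and $\mathcal{L}_\lambda$ is not very ample. The only genuine obstacle is establishing the immersion property at $eB$ for regular $\lambda$ — equivalently $e_{-\alpha} v_\lambda \neq 0$ for all $\alpha > 0$, rather than mere injectivity of $\Phi_\lambda$ — together with nailing down the sign conventions that identify $H^0(G/B, \mathcal{L}_\lambda)$ with $V(\lambda)^\ast$ and $\Phi_\lambda$ with $gB \mapsto [g v_\lambda]$; both are classical, which is why the statement is simply cited to Jantzen.
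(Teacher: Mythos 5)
The paper states this proposition with a citation to Jantzen's book and gives no proof of its own, so there is no internal argument to compare against; I am evaluating your derivation on its merits. Your outline is correct and is essentially the standard Borel--Weil argument that Jantzen carries out: global generation follows from nonvanishing of $H^0(G/B, \mathcal{L}_\lambda)$ together with $G$-equivariance of the base locus; for $\lambda \in P_{++}$ the morphism $\Phi_\lambda \colon gB \mapsto [g v_\lambda]$ separates points (the stabilizer of the line $\mathbb{C} v_\lambda$ is $P_{J(\lambda)} = B$ when $J(\lambda) = \emptyset$) and separates tangent vectors, so it is a closed immersion; and for nonregular $\lambda$ the map factors through $G/P_{J(\lambda)}$ with positive-dimensional fibers, so $\mathcal{L}_\lambda$ is not very ample.

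One place you make the argument look harder than it is: the claim that $e_{-\alpha} v_\lambda \neq 0$ for every positive root $\alpha$ needs no induction on height, and the naive induction you gesture at is in fact not straightforward to close (writing $e_{-\alpha}$ as a bracket $[f_i, e_{-\beta}]$ gives $e_{-\alpha} v_\lambda$ as a difference of two terms, and the nonvanishing of $e_{-\beta} v_\lambda$ alone does not immediately force the difference to be nonzero). The clean route is to apply $\mathfrak{sl}_2$-representation theory directly to the copy of $\mathfrak{sl}_2$ attached to $\alpha$ itself: take root vectors $e_\alpha \in \mathfrak{g}_\alpha$, $e_{-\alpha} \in \mathfrak{g}_{-\alpha}$ normalized so that $h_\alpha := [e_\alpha, e_{-\alpha}]$ is the coroot of $\alpha$. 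Then $e_\alpha v_\lambda = 0$ and $h_\alpha v_\lambda = \langle \lambda, h_\alpha \rangle v_\lambda$, so $v_\lambda$ is a highest weight vector for this $\mathfrak{sl}_2$ of highest weight $\langle \lambda, h_\alpha \rangle$. Since $h_\alpha$ is a nonnegative, nonzero integral combination of the simple coroots $h_i$, regularity of $\lambda$ gives $\langle \lambda, h_\alpha \rangle \geq 1$, and finite-dimensional $\mathfrak{sl}_2$-theory then yields $e_{-\alpha} v_\lambda \neq 0$ in one step, uniformly over all positive roots $\alpha$. With that replacement your proof is complete; the remaining points (proper, injective on closed points, and unramified implies closed immersion; the complete linear system dominating any subsystem; the factorization through $G/P_{J(\lambda)}$ in the nonregular case) are all standard and correctly deployed.
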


For $\lambda \in P_+$, we know by the Borel--Weil theorem that the space $H^0(G/B, \mathcal{L}_\lambda)$ of global sections is a $G$-module which is isomorphic to the dual module $V(\lambda)^\ast \coloneqq {\rm Hom}_\mathbb{C}(V(\lambda), \mathbb{C})$.
Let $B^- \subseteq G$ denote the Borel subgroup opposite to $B$, and $U^-$ the unipotent radical of $B^-$.
Lusztig \cite{Lus_can, Lus_quivers, Lus1} and Kashiwara \cite{Kas1, Kas2, Kas3} constructed specific $\c$-bases of $\c[U^-]$ and $H^0(G/B, \mathcal{L}_\lambda)$ via the quantized enveloping algebra $U_q(\mathfrak{g})$ associated with $\mathfrak{g}$. 
These are called (the specialization at $q = 1$ of) the \emph{dual canonical bases} ($=$ the \emph{upper global bases}), and denoted by $\{G^{\rm up}(b) \mid b \in \mathcal{B}(\infty)\} \subseteq \c[U^-]$ and by $\{G_{\lambda}^{\rm up}(b) \mid b \in \mathcal{B}(\lambda)\} \subseteq H^0(G/B, \mathcal{L}_\lambda)$, respectively. 
The index sets $\mathcal{B}(\infty)$ and $\mathcal{B}(\lambda)$ are typical examples of crystals, called \emph{crystal bases}.
See \cite{Kas5} for a survey on crystal bases. 
An element $b_\infty \in \mathcal{B}(\infty)$ (resp., $b_\lambda \in \mathcal{B}(\lambda)$) is uniquely determined by the condition that $G^{\rm up}(b_\infty) \in \c[U^-]$ is a constant function on $U^-$ (resp., $G_{\lambda}^{\rm up}(b_\lambda)$ is a lowest weight vector in $H^0(G/B, \mathcal{L}_\lambda)$).

\begin{thm}[{see \cite[Theorem 5]{Kas2}}]\label{p:relations_between_crystals}
For $\lambda \in P_+$, there exists a unique surjective map 
\[\pi_\lambda \colon \mathcal{B} (\infty) \twoheadrightarrow \mathcal{B} (\lambda) \cup \{0\}\]
such that $\pi_\lambda (b_\infty) = b_\lambda$ and $\pi_\lambda (\tilde{f}_i b) = \tilde{f}_i \pi_\lambda (b)$ for all $i \in I$ and $b \in \mathcal{B}(\infty)$.
In addition, for
\[\widetilde{\mathcal{B}} (\lambda) \coloneqq \{b \in \mathcal{B}(\infty) \mid \pi_\lambda(b) \neq 0\},\] 
the restriction map $\pi_\lambda \colon \widetilde{\mathcal{B}} (\lambda) \rightarrow \mathcal{B} (\lambda)$ is bijective. 
\end{thm}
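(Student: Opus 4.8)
The plan is to reduce the general dominant weight $\lambda \in P_+$ to the rank-one analysis already built into the axioms of crystal bases, using the tensor product rule and the well-known embedding of $\mathcal{B}(\lambda)$ into $\mathcal{B}(\infty) \otimes T_\lambda$, where $T_\lambda = \{t_\lambda\}$ is the one-element crystal with $\mathrm{wt}(t_\lambda) = \lambda$, $\varepsilon_i(t_\lambda) = \varphi_i(t_\lambda) = -\infty$. First I would recall the embedding of crystals $\mathcal{B}(\lambda) \hookrightarrow \mathcal{B}(\infty) \otimes T_\lambda$ sending $b_\lambda \mapsto b_\infty \otimes t_\lambda$ and commuting with all $\tilde{f}_i$; this is exactly the statement that $\mathcal{B}(\lambda)$ is the connected component of $b_\infty \otimes t_\lambda$ in $\mathcal{B}(\infty) \otimes T_\lambda$, together with the observation that $\tilde{e}_i$ acts as $0$ on any element mapping outside this component. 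The desired map $\pi_\lambda$ is then the composition of $\mathcal{B}(\infty) \to \mathcal{B}(\infty) \otimes T_\lambda$, $b \mapsto b \otimes t_\lambda$, with the retraction onto $\mathcal{B}(\lambda) \cup \{0\}$ that sends everything outside the component of $b_\infty \otimes t_\lambda$ to $0$.

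Next I would verify the two defining properties. The normalization $\pi_\lambda(b_\infty) = b_\lambda$ is immediate from the identification $b_\lambda = b_\infty \otimes t_\lambda$. For the intertwining property $\pi_\lambda(\tilde{f}_i b) = \tilde{f}_i \pi_\lambda(b)$, I would argue by induction on the distance of $b$ from $b_\infty$ in the $\tilde{f}_i$-reachable part of $\mathcal{B}(\infty)$ (every element of $\mathcal{B}(\infty)$ is obtained from $b_\infty$ by applying a sequence of $\tilde{f}_i$'s), splitting into the case where $b \otimes t_\lambda$ lies in the component of $b_\infty \otimes t_\lambda$ — where the tensor product rule for $\tilde{f}_i$ reduces to $\tilde{f}_i(b \otimes t_\lambda) = (\tilde{f}_i b) \otimes t_\lambda$ because $\varphi_i(t_\lambda) = -\infty < \varepsilon_i(b)$ always fails the other branch — and the case where it does not, in which case one checks $\tilde{f}_i b \otimes t_\lambda$ also stays outside, so both sides are $0$. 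Uniqueness of $\pi_\lambda$ then follows because $b_\infty$ generates $\mathcal{B}(\infty)$ under the $\tilde{f}_i$ and the relations force the value on every element.

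For the second assertion, that $\pi_\lambda$ restricts to a bijection $\widetilde{\mathcal{B}}(\lambda) \to \mathcal{B}(\lambda)$: surjectivity is clear since $\pi_\lambda$ is surjective onto $\mathcal{B}(\lambda) \cup \{0\}$ and by definition $\widetilde{\mathcal{B}}(\lambda)$ is precisely the preimage of $\mathcal{B}(\lambda)$. For injectivity I would show that if $\pi_\lambda(b) = \pi_\lambda(b') \neq 0$ then $b = b'$, again by downward induction on the weight, i.e. by writing $\pi_\lambda(b) = \tilde{f}_{i_1} \cdots \tilde{f}_{i_r} b_\lambda$ and lifting: since $\tilde{e}_i$ commutes with $\pi_\lambda$ on the locus where the image is nonzero (one checks $\pi_\lambda(\tilde{e}_i b) = \tilde{e}_i \pi_\lambda(b)$ whenever $\pi_\lambda(b) \neq 0$, from property (iv) of crystals and the $\tilde{f}_i$-intertwining just proved), we can apply a sequence of $\tilde{e}_i$'s to both $b$ and $b'$ bringing their images to $b_\lambda$; at that stage the lift of $b_\lambda$ in $\widetilde{\mathcal{B}}(\lambda)$ is forced to be $b_\infty$ because it must have the maximal weight among elements mapping to something nonzero, hence $b$ and $b'$ become equal and, running the $\tilde{f}_i$'s back, $b = b'$.

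The main obstacle I anticipate is the careful bookkeeping in the intertwining argument, specifically controlling exactly when $b \otimes t_\lambda$ falls out of the connected component of $b_\infty \otimes t_\lambda$ — equivalently characterizing $\widetilde{\mathcal{B}}(\lambda)$ as $\{b \in \mathcal{B}(\infty) \mid \varepsilon_i^*(\cdots) \text{ bounds hold}\}$ — and making sure the tensor-product-rule case analysis for $\tilde{e}_i$ and $\tilde{f}_i$ is exhaustive and consistent with the "$=0$" conventions. Everything else is a formal consequence of the crystal axioms once the embedding $\mathcal{B}(\lambda) \hookrightarrow \mathcal{B}(\infty) \otimes T_\lambda$ is in hand.
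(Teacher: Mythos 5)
The paper does not prove this result; it is quoted directly from Kashiwara's paper \cite[Theorem~5]{Kas2}, so there is no in-paper argument to compare against. Your approach is, however, the standard one and essentially what Kashiwara does: use the strict embedding $\mathcal{B}(\lambda)\hookrightarrow\mathcal{B}(\infty)\otimes T_\lambda$, observe that because $\varepsilon_i(t_\lambda)=\varphi_i(t_\lambda)=-\infty$ the Kashiwara operators always act on the $\mathcal{B}(\infty)$ tensor factor, identify the image of $\mathcal{B}(\lambda)$ with the connected component of $b_\infty\otimes t_\lambda$, define $\pi_\lambda$ as the composite $b\mapsto b\otimes t_\lambda$ followed by the retraction onto that component (sending everything else to $0$), and then read off the $\tilde f_i$-intertwining, surjectivity, uniqueness, and --- via the $\tilde e_i$-intertwining on the nonzero locus and the fact that $b_\infty$ is the unique element of $\mathcal{B}(\infty)$ with $\varepsilon_i\equiv 0$ --- injectivity on $\widetilde{\mathcal{B}}(\lambda)$. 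The proposal is correct.

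Two small clean-ups worth making. First, the inequality you cite for the tensor-product rule is written backwards: the reason $\tilde f_i$ (resp.\ $\tilde e_i$) always acts on the first factor is that $\varphi_i(b)>\varepsilon_i(t_\lambda)=-\infty$ (resp.\ $\varphi_i(b)\geq\varepsilon_i(t_\lambda)$) holds unconditionally, not ``$\varphi_i(t_\lambda)=-\infty<\varepsilon_i(b)$.'' Second, the induction you propose for the intertwining property is unnecessary: the identity $\tilde f_i(b\otimes t_\lambda)=(\tilde f_i b)\otimes t_\lambda$ holds for \emph{every} $b\in\mathcal{B}(\infty)$, and the retraction onto a connected component is automatically a strict crystal morphism (the component and its complement are each closed under $\tilde e_i$, $\tilde f_i$), so $\pi_\lambda\circ\tilde f_i=\tilde f_i\circ\pi_\lambda$ follows by directly composing these two facts, with no case split or induction.
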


The natural projection $G \twoheadrightarrow G/B$ induces an open embedding $U^- \hookrightarrow G/B$ by which $U^-$ is identified with an affine open subvariety of $G/B$. 

\begin{prop}[{see, for instance, \cite[Proposition 3.18]{FO}}]\label{p:relation_between_crystals_G/B}
Let $\lambda \in P_+$, and set $\tau_\lambda \coloneqq G_{\lambda}^{\rm up}(b_\lambda) \in H^0(G/B, \mathcal{L}_\lambda)$.
\begin{enumerate}
\item[{\rm (1)}] The section $\tau_\lambda$ does not vanish on $U^-\ (\hookrightarrow G/B)$;
in particular, the restriction $(\sigma/\tau_\lambda)|_{U^-}$ of a rational function $\sigma/\tau_\lambda \in \c(G/B)$ belongs to $\c[U^-]$ for all $\sigma \in H^0(G/B, \mathcal{L}_\lambda)$.
\item[{\rm (2)}] A map $\iota_\lambda \colon H^0(G/B, \mathcal{L}_\lambda) \rightarrow \c[U^-]$ given by $\iota_\lambda (\sigma) \coloneqq (\sigma/\tau_\lambda)|_{U^-}$ is injective.
\item[{\rm (3)}] The equality $G^{\rm up}(b) = \iota_\lambda (G_{\lambda}^{\rm up}(\pi_\lambda(b)))$ holds for all $b \in \widetilde{\mathcal{B}} (\lambda)$. 
\end{enumerate}
\end{prop}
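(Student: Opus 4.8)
The plan is to realize $H^0(G/B,\mathcal L_\lambda)$ concretely as a space of matrix coefficients on $G$, read off (1) and (2) directly from this description, and then deduce (3) from Kashiwara's compatibility of global bases with the evaluation map $U_q^-(\mathfrak g)\to V_q(\lambda)$. Via the Borel--Weil identification $H^0(G/B,\mathcal L_\lambda)\simeq V(\lambda)^\ast$, embed this space into $\c[G]$ by $\xi\mapsto f_\xi$ with $f_\xi(g)\coloneqq\langle\xi,g\cdot v_\lambda\rangle$; its image is the space of $f\in\c[G]$ with $f(gb)=\lambda(b)f(g)$ for $b\in B$, which is compatible with the presentation $\mathcal L_\lambda=(G\times\c)/B$. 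The lowest weight vector $\tau_\lambda$ corresponds, up to a nonzero scalar, to the unique weight-$(-\lambda)$ functional $\xi_\lambda\in V(\lambda)^\ast$, which pairs nontrivially with $v_\lambda$ and vanishes on the other weight spaces of $V(\lambda)$. Since every $u\in U^-$ satisfies $u\cdot v_\lambda=v_\lambda+(\text{a sum of vectors of weight strictly below }\lambda)$, the value $f_{\tau_\lambda}(u)=\xi_\lambda(u\cdot v_\lambda)=\xi_\lambda(v_\lambda)$ is a nonzero constant independent of $u$. Hence $\tau_\lambda$ is nowhere vanishing on the big cell $U^-B/B\simeq U^-$ and trivializes $\mathcal L_\lambda$ there, so $(\sigma/\tau_\lambda)|_{U^-}$ is a scalar multiple of the restriction of the regular function $f_\sigma\in\c[G]$ and therefore lies in $\c[U^-]$, proving (1). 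For (2), if $\iota_\lambda(\sigma)=0$ then $f_\sigma|_{U^-}=0$, whence $f_\sigma(ub)=\lambda(b)f_\sigma(u)=0$ for all $u\in U^-$ and $b\in B$; since $U^-B$ is dense in $G$, this forces $f_\sigma=0$ and $\sigma=0$.

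For (3) I would lift to the quantized setting. Over $\q(q)$, let $V_q(\lambda)$ be the integrable highest weight $U_q(\mathfrak g)$-module with highest weight vector $u_\lambda$, and recall Kashiwara's theorem---the global-basis refinement of \cref{p:relations_between_crystals}---that the surjection $\psi_\lambda\colon U_q^-(\mathfrak g)\twoheadrightarrow V_q(\lambda)$, $x\mapsto x\cdot u_\lambda$, sends the lower global basis element $G^{\rm low}(b)$ to $G^{\rm low}_\lambda(\pi_\lambda(b))$ when $b\in\widetilde{\mathcal B}(\lambda)$ and to $0$ otherwise. Dualizing with respect to the pairings under which the lower and upper global bases are dual, and using that $\pi_\lambda$ restricts to a bijection from $\widetilde{\mathcal B}(\lambda)$ onto $\mathcal B(\lambda)$, one deduces that the transpose ${}^{t}\psi_\lambda$ carries $G^{\rm up}_\lambda(\pi_\lambda(b))$ to $G^{\rm up}(b)$ for every $b\in\widetilde{\mathcal B}(\lambda)$. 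It then remains to identify ${}^{t}\psi_\lambda$, after specialization at $q=1$ and under the standard pairing of $\c[U^-]$ with the algebra of distributions on $U^-$ supported at the identity, with the geometric map $\iota_\lambda$ of (2): unwinding the definitions, both send a covector $\xi$ to (the germ at $e$ of) the regular function $u\mapsto\langle\xi,u\cdot v_\lambda\rangle$ on $U^-$. Since the specializations at $q=1$ of these upper global bases are by definition the dual canonical bases $\{G^{\rm up}(b)\}$ and $\{G^{\rm up}_\lambda(b)\}$ appearing in the statement, (3) follows.

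I expect the main obstacle to be bookkeeping rather than anything conceptual: pinning down conventions so that the representation-theoretic and quantum descriptions of the canonical bases are consistent with the geometric normalization of $\iota_\lambda$ (which weight vector counts as ``lowest'', whether a twist by $w_0$ intervenes, how $\pi_\lambda$ interacts with restriction to the opposite big cell), together with verifying that the crystal lattices and Kostant-type integral forms on both sides specialize compatibly at $q=1$, so that the transpose of Kashiwara's map really becomes $\iota_\lambda$ in the classical limit. A more self-contained alternative for (3) would be to characterize both $G^{\rm up}(b)$ and $G^{\rm up}_\lambda(b)$ by bar-invariance together with unitriangularity against a common monomial (PBW-type) basis adapted to a fixed reduced word, and then verify directly that $\iota_\lambda(G^{\rm up}_\lambda(\pi_\lambda(b)))$ is bar-invariant with the correct triangular expansion; the uniqueness in that characterization would force the desired equality.
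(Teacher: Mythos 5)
The paper gives no proof here; the proposition is stated with only a citation to \cite[Proposition~3.18]{FO}, so there is no in-paper argument to compare against. Your argument is correct and is the standard one. Parts (1) and (2) are the direct computation once $H^0(G/B,\mathcal L_\lambda)\simeq V(\lambda)^*$ is realized as matrix coefficients $g\mapsto\xi(g v_\lambda)$; your transformation rule $f(gb)=\lambda(b)f(g)$ does match the paper's convention $\mathcal L_\lambda=(G\times\c)/B$ with $(g,c)\cdot b=(gb,\lambda(b)c)$, and the evaluation $f_{\tau_\lambda}(u)=\xi_\lambda(u v_\lambda)=\xi_\lambda(v_\lambda)\neq0$ for $u\in U^-$ is exactly the trivialization of $\mathcal L_\lambda$ over the big cell. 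Part (3) is the transpose of Kashiwara's global-basis compatibility for the evaluation $U_q^-(\mathfrak g)\twoheadrightarrow V_q(\lambda)$ (the global-basis refinement of \cref{p:relations_between_crystals}, from \cite{Kas3}), followed by identifying the transpose with $\iota_\lambda$ under the pairing of $\c[U^-]$ with the algebra of distributions on $U^-$ supported at the identity. The ``bookkeeping'' you flag is genuinely all there is: the dualization step needs only that $\psi_\lambda$ carry global-basis elements to global-basis elements or to zero together with the bijectivity of $\pi_\lambda$ on $\widetilde{\mathcal B}(\lambda)$, and the normalization $\xi_\lambda(v_\lambda)=1$ is forced because $\tau_\lambda=G^{\rm up}_\lambda(b_\lambda)$ is by definition dual to $G^{\rm low}_\lambda(b_\lambda)=v_\lambda$; no twist by $w_0$ intervenes precisely because $b_\lambda$ is the highest-weight element of $\mathcal B(\lambda)$ while $G^{\rm up}_\lambda(b_\lambda)$ sits in $V(\lambda)^*$ at weight $-\lambda$. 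Your alternative characterization of (3) via bar-invariance and unitriangularity against a fixed PBW basis would also work; it trades Kashiwara's compatibility theorem for a uniqueness argument, which is more self-contained but heavier to execute.
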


Let $N_G(H)$ be the normalizer of $H$ in $G$, $W \coloneqq N_G(H)/H$ the Weyl group of $\mathfrak{g}$, and $\widetilde{w} \in N_G(H)$ a lift for $w \in W = N_G(H)/H$.  
The group $W$ is generated by the set $\{s_i \mid i \in I\}$ of simple reflections. 
A sequence ${\bm i} = (i_1, \ldots, i_m) \in I^m$ is called a \emph{reduced word} for $w \in W$ if $w = s_{i_1} \cdots s_{i_m}$ and if $m$ is the minimum among such expressions of $w$. 
In this case, the length $m$ is called the \emph{length} of $w$, which is denoted by $\ell(w)$. 
Let $R(w)$ be the set of reduced words for $w \in W$, and $w_0 \in W$ the longest element.
For $w \in W$, we set 
\[X_w \coloneqq \overline{B \widetilde{w} B/B} \subseteq G/B\quad ({\rm resp.}, X^w \coloneqq \overline{B^- \widetilde{w} B/B} \subseteq G/B),\]
which is called a \emph{Schubert variety} (resp., an \emph{opposite Schubert variety}). 
These varieties $X_w$ and $X^w$ are irreducible normal projective varieties and independent of the choice of a lift $\widetilde{w}$ for $w$. 
In addition, we have 
\[\widetilde{w_0} X_w = \overline{\widetilde{w_0} B \widetilde{w} B/B} = \overline{B^- \widetilde{w_0} \widetilde{w} B/B} = X^{w_0 w}.\]
Let $\leq$ denote the Bruhat order on $W$.

\begin{prop}[{see, for instance, \cite[Section 1.3]{Bri}}]
For $v, w \in W$, the intersection $X_w \cap X^v$ is nonempty if and only if $v \leq w$. 
In this case, the scheme-theoretic intersection 
\[X_w^v \coloneqq X_w \cap X^v \subseteq G/B\]
is reduced and irreducible. 
In addition, it holds that $\dim_\c (X_w^v) = \ell(w) - \ell(v)$. 
\end{prop}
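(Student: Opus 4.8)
The plan is to reduce everything to the Bruhat-type cell decompositions of $G/B$ together with transversality. Recall that $G/B$ decomposes both into Schubert cells and into opposite Schubert cells,
\[G/B = \bigsqcup_{u \in W} C_u = \bigsqcup_{u \in W} C^u, \qquad C_u \coloneqq B\widetilde u B/B \cong \mathbb{A}^{\ell(u)}, \quad C^u \coloneqq B^- \widetilde u B/B \cong \mathbb{A}^{\ell(w_0)-\ell(u)},\]
with $X_w = \bigsqcup_{u \le w} C_u$ and $X^v = \bigsqcup_{u \ge v} C^u$, and the $T$-fixed points are exactly the points $e_u \coloneqq \widetilde u B/B$, with $e_u \in C_{u'}$ (resp.\ $e_u \in C^{u'}$) if and only if $u = u'$. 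Intersecting the two decompositions gives the stratification $X_w \cap X^v = \bigsqcup_{u \le w,\, u' \ge v} (C_u \cap C^{u'})$. For nonemptiness: if $v \le w$, then $e_v \in C_v \cap C^v \subseteq X_w \cap X^v$. Conversely, I would fix a regular dominant cocharacter $\lambda \colon \mathbb{G}_m \to T$, so that $C_u$ (resp.\ $C^u$) is the attracting (resp.\ repelling) cell of $e_u$ for the induced $\mathbb{G}_m$-action on $G/B$; if $x \in C_u \cap C^{u'}$, then $\lambda(\mathbb{G}_m)\cdot x \subseteq C_u$ (as $C_u$ is $B$-stable), so $e_{u'} = \lim_{t \to \infty} \lambda(t)\cdot x \in \overline{C_u} = X_u$, forcing $u' \le u$. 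Applied to the stratification, $X_w \cap X^v \neq \emptyset$ yields $v \le u' \le u \le w$, hence $v \le w$.

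For irreducibility and the dimension formula, I would consider the open Richardson cell $\mathring X_w^v \coloneqq C_w \cap C^v$. It is classical (via the flow description just used, or via Richardson's analysis of the double coset $B\widetilde w B \cap B^- \widetilde v B$ in $G$) that $\mathring X_w^v$ is smooth and irreducible of dimension $\ell(w) - \ell(v)$, while every other stratum $C_u \cap C^{u'}$ occurring above has dimension $\ell(u) - \ell(u') < \ell(w) - \ell(v)$. On the other hand, since $G/B$ is smooth, every irreducible component of the scheme $X_w \cap X^v$ has dimension at least $\dim X_w + \dim X^v - \dim G/B = \ell(w) - \ell(v)$; combined with the dimension bound on the strata, this forces $X_w \cap X^v$ to be equidimensional of dimension $\ell(w) - \ell(v)$, with each component the closure of a single top-dimensional stratum, which can only be $\mathring X_w^v$. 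Since $\mathring X_w^v$ is irreducible, it follows that $X_w \cap X^v$ is irreducible, equal to $\overline{\mathring X_w^v}$, of dimension $\ell(w) - \ell(v)$.

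It remains to show that the scheme $X_w \cap X^v$ is reduced, and I expect this to be the main obstacle, since it concerns the scheme structure rather than the underlying set. One clean route is positive characteristic: $G/B$ carries a Frobenius splitting compatibly splitting all Schubert and all opposite Schubert varieties, and the scheme-theoretic intersection of two compatibly split subschemes is again compatibly split, hence reduced; one then transfers the conclusion to characteristic $0$ by a standard spreading-out argument. Alternatively, since Schubert and opposite Schubert varieties are Cohen--Macaulay and, by the above, $X_w$ and $X^v$ meet in the expected codimension, the intersection is Cohen--Macaulay, hence has no embedded components, so reducedness is equivalent to generic reducedness along $\mathring X_w^v$; the latter is the transversality of $X_w$ and $X^v$ at the generic point of $\mathring X_w^v$ (both are smooth there, as $\mathring X_w^v \subseteq C_w$ and $\mathring X_w^v \subseteq C^v$), which one reads off Richardson's parametrization of the double coset intersection. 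Either way, combining the three steps proves the proposition; see \cite{Bri} for the details.
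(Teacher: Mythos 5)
Your argument is correct. The paper states this proposition without proof, simply citing Brion's lectures \cite[Section 1.3]{Bri}, and your three-step outline --- the Bruhat/opposite-Bruhat stratification together with a $\mathbb{G}_m$-flow argument forcing $v \leq u' \leq u \leq w$ on any nonempty stratum $C_u \cap C^{u'}$; the dimension count combining the local lower bound $\ell(w) - \ell(v)$ in the smooth ambient $G/B$ with the strict upper bound on all strata other than $C_w \cap C^v$ to get irreducibility and the dimension formula; and Frobenius splitting (or Cohen--Macaulayness plus generic transversality along the open Richardson cell) for reducedness --- is exactly the standard route taken in that reference.
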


The closed subvariety $X_w^v$ of $G/B$ is called a \emph{Richardson variety}. 
Let $e \in W$ denote the identity element.
Since both $X_{w_0}$ and $X^e$ coincide with $G/B$, it follows that $X_{w_0}^w = X^w$ and $X_w^e = X_w$ for each $w \in W$.
For $w \in W$ and $\lambda \in P_+$, define a $B$-submodule $V_w(\lambda) \subseteq V(\lambda)$ (resp., a $B^-$-submodule $V^w(\lambda) \subseteq V(\lambda)$) by
\begin{align*}
V_w(\lambda) \coloneqq \sum_{b \in B} \c b \widetilde{w} v_{\lambda}\quad({\rm resp.},\ V^w(\lambda) \coloneqq \sum_{b \in B^-} \c b \widetilde{w} v_{\lambda}),
\end{align*}
which is called a \emph{Demazure module} (resp., an \emph{opposite Demazure module}).  
Then we see that $\widetilde{w_0} V_w(\lambda) = V^{w_0 w}(\lambda)$.
By the Borel--Weil type theorem (see, for instance, \cite[Corollary 8.1.26]{Kum}), the space $H^0(X_w, \mathcal{L}_\lambda)$ of global sections is a $B$-module which is isomorphic to the dual module $V_w (\lambda)^\ast \coloneqq {\rm Hom}_\mathbb{C}(V_w(\lambda), \mathbb{C})$.
From this, it follows that
\begin{align*}
H^0(X^w, \mathcal{L}_\lambda) = H^0(\widetilde{w_0} X_{w_0 w}, \mathcal{L}_\lambda) \simeq (\widetilde{w_0} V_{w_0 w}(\lambda))^\ast = V^w (\lambda)^\ast
\end{align*}
as $B^-$-modules. 
Let $\pi_w \colon H^0(G/B, \mathcal{L}_\lambda) \rightarrow H^0(X_w, \mathcal{L}_\lambda)$ (resp., $\pi^w \colon H^0(G/B, \mathcal{L}_\lambda) \rightarrow H^0(X^w, \mathcal{L}_\lambda)$) denote the restriction map.

\begin{prop}[{\cite[Proposition 3.2.3 (i) and equation (4.1)]{Kas4}}]\label{p:characterization_of_Demazure_crystals}
For $w \in W$ and $\lambda \in P_+$, there exists a unique subset $\mathcal{B}_w(\lambda)$ of $\mathcal{B}(\lambda)$ such that $\{\pi_w (G^{\rm up} _\lambda(b)) \mid b \in \mathcal{B}_w(\lambda)\}$ forms a $\mathbb{C}$-basis of $H^0(X_w, \mathcal{L}_\lambda)$ and such that $\pi_w (G^{\rm up} _\lambda(b)) = 0$ for all $b \in \mathcal{B}(\lambda) \setminus \mathcal{B}_w(\lambda)$.
Similarly, there exists a unique subset $\mathcal{B}^w(\lambda)$ of $\mathcal{B}(\lambda)$ such that analogous conditions hold for $\pi^w$.
\end{prop}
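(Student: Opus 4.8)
The plan is to recognize the proposition as the geometric reformulation, through the Borel--Weil identifications recorded above, of Kashiwara's theorem that the lower global basis of $V(\lambda)$ is compatible with Demazure submodules. Write $\{G^{\rm low}_\lambda(b) \mid b \in \mathcal{B}(\lambda)\}$ for the lower global basis of $V(\lambda)$, so that the upper global basis $\{G^{\rm up}_\lambda(b) \mid b \in \mathcal{B}(\lambda)\}$, viewed inside $V(\lambda)^\ast \simeq H^0(G/B, \mathcal{L}_\lambda)$, is the dual basis: $\langle G^{\rm up}_\lambda(b), G^{\rm low}_\lambda(b^\prime) \rangle = \delta_{b, b^\prime}$. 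Under $H^0(G/B, \mathcal{L}_\lambda) \simeq V(\lambda)^\ast$ and $H^0(X_w, \mathcal{L}_\lambda) \simeq V_w(\lambda)^\ast$ the map $\pi_w$ is the transpose of the inclusion $V_w(\lambda) \hookrightarrow V(\lambda)$, so $\pi_w(G^{\rm up}_\lambda(b))$ is nothing but the functional $G^{\rm up}_\lambda(b)$ restricted to $V_w(\lambda)$.

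The input I would establish first is the existence of the \emph{Demazure crystal} $\mathcal{B}_w(\lambda) \subseteq \mathcal{B}(\lambda)$, defined by $\mathcal{B}_e(\lambda) = \{b_\lambda\}$ and $\mathcal{B}_{s_i w}(\lambda) = \{\tilde{f}_i^{\,k} b \mid b \in \mathcal{B}_w(\lambda),\ k \in \z_{\geq 0}\} \setminus \{0\}$ whenever $\ell(s_i w) = \ell(w) + 1$, together with the equality
\[V_w(\lambda) = \bigoplus_{b \in \mathcal{B}_w(\lambda)} \c\, G^{\rm low}_\lambda(b).\]
I would prove this by induction on $\ell(w)$, passing to the quantized enveloping algebra: a reduced word for $w$ gives the recursion $V_{s_i w}(\lambda) = \sum_{n \geq 0} \q[q, q^{-1}]\, f_i^{(n)} \cdot V_w(\lambda)$ of quantized Demazure modules, and the compatibility with the global basis is carried through it using the fact that every $i$-string of the global basis is a global basis for the corresponding $U_q(\mathfrak{sl}_2)$-module, so that $f_i^{(n)} G^{\rm low}_\lambda(b)$ equals $G^{\rm low}_\lambda(\tilde{f}_i^{\,n} b)$ plus a combination of global basis vectors lower in the same $i$-string. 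One then checks that the span obtained (equivalently, the set $\mathcal{B}_w(\lambda)$) is independent of the reduced word and specializes at $q = 1$, which is legitimate because both sides are compatible with the standard integral forms and crystal lattices. This step is exactly Kashiwara's theorem; in the write-up I would cite it, but I expect its proof --- especially the well-definedness and the $q \to 1$ specialization --- to be the real obstacle, the rest being formal.

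Granting this, the proposition follows by duality. If $b \notin \mathcal{B}_w(\lambda)$, then $G^{\rm up}_\lambda(b)$ pairs to zero with every $G^{\rm low}_\lambda(b^\prime)$, $b^\prime \in \mathcal{B}_w(\lambda)$, hence annihilates $V_w(\lambda)$, so $\pi_w(G^{\rm up}_\lambda(b)) = 0$. If $b \in \mathcal{B}_w(\lambda)$, then $\{G^{\rm up}_\lambda(b)|_{V_w(\lambda)} \mid b \in \mathcal{B}_w(\lambda)\}$ is, by the displayed equality, the basis of $V_w(\lambda)^\ast$ dual to $\{G^{\rm low}_\lambda(b) \mid b \in \mathcal{B}_w(\lambda)\}$; transporting it through $V_w(\lambda)^\ast \simeq H^0(X_w, \mathcal{L}_\lambda)$ gives the claimed $\c$-basis $\{\pi_w(G^{\rm up}_\lambda(b)) \mid b \in \mathcal{B}_w(\lambda)\}$ of $H^0(X_w, \mathcal{L}_\lambda)$. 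Uniqueness is then automatic: any subset $\mathcal{B}^\prime$ with the two stated properties must equal $\{b \in \mathcal{B}(\lambda) \mid \pi_w(G^{\rm up}_\lambda(b)) \neq 0\}$ --- its elements have nonzero image since they index a basis, and its complement has zero image by hypothesis --- and we have just verified that this set is $\mathcal{B}_w(\lambda)$ and satisfies both conditions.

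For the opposite Schubert variety $X^w$ the argument is the mirror image: $H^0(X^w, \mathcal{L}_\lambda) \simeq V^w(\lambda)^\ast$ with $\pi^w$ the transpose of $V^w(\lambda) \hookrightarrow V(\lambda)$, and one repeats the discussion with $V^w(\lambda) = \sum_{b \in B^-} \c b \widetilde{w} v_\lambda$ in place of $V_w(\lambda)$, so that $\mathcal{B}^w(\lambda)$ is the opposite Demazure crystal; equivalently one transports the statement for $X_{w_0 w}$ along the longest-element twist $V^w(\lambda) = \widetilde{w_0} V_{w_0 w}(\lambda)$ and the induced involution of $\mathcal{B}(\lambda)$ (note that $\mathcal{B}^e(\lambda) = \mathcal{B}(\lambda)$ and $\mathcal{B}^{w_0}(\lambda)$ is a singleton, consistent with $X^e = G/B$ and $X^{w_0}$ being a point). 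The remaining verifications --- vanishing of $\pi^w(G^{\rm up}_\lambda(b))$ off $\mathcal{B}^w(\lambda)$, the basis property on $\mathcal{B}^w(\lambda)$, and uniqueness --- are identical to the previous paragraph.
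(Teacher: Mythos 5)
Your proposal is correct and takes essentially the same route as the paper, which does not prove this statement but cites it from Kashiwara: the entire content is Kashiwara's theorem that the lower global basis of $V(\lambda)$ is compatible with the Demazure modules $V_w(\lambda)$ and opposite Demazure modules $V^w(\lambda)$ (the cited \cite[Proposition 3.2.3 (i) and equation (4.1)]{Kas4}), and your dualization of that theorem through the Borel--Weil identifications $H^0(X_w,\mathcal{L}_\lambda)\simeq V_w(\lambda)^\ast$, $H^0(X^w,\mathcal{L}_\lambda)\simeq V^w(\lambda)^\ast$, together with the formal uniqueness argument, is exactly how the cited result yields the proposition. One small caution inside the part you defer to Kashiwara: for the opposite case use your first route (the induction with $V^w(\lambda)$ and the operators $\tilde{e}_i$), since the lift $\overline{w_0}$ does not permute the global basis, so the alternative of transporting the $X_{w_0w}$ statement along the $\widetilde{w_0}$-twist ``and the induced involution of $\mathcal{B}(\lambda)$'' is not literally available as stated.
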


These subsets $\mathcal{B}_w(\lambda)$ and $\mathcal{B}^w(\lambda)$ of $\mathcal{B}(\lambda)$ are called a \emph{Demazure crystal} and an \emph{opposite Demazure crystal}, respectively. 

\begin{prop}[{\cite[Propositions 3.2.3 (ii), (iii) and 4.2]{Kas4}}]\label{p:properties of Demazure}
Let $w \in W$, $\lambda \in P_+$, and $i \in I$.
\begin{enumerate}
\item[{\rm (1)}] It holds that $\tilde{e}_i \mathcal{B}_w(\lambda) \subseteq \mathcal{B}_w(\lambda) \cup \{0\}$ and $\tilde{f}_i \mathcal{B}^w(\lambda) \subseteq \mathcal{B}^w(\lambda) \cup \{0\}$.  
\item[{\rm (2)}] If $s_i w < w$, then  
\begin{align*}
\mathcal{B}_w(\lambda) = \bigcup_{k \in \z_{\geq 0}} \tilde{f}_i ^k \mathcal{B}_{s_i w}(\lambda) \setminus \{0\}\quad {\it and}\quad \mathcal{B}^{s_i w}(\lambda) = \bigcup_{k \in \z_{\geq 0}} \tilde{e}_i ^k \mathcal{B}^w(\lambda) \setminus \{0\}.
\end{align*}
\end{enumerate}
\end{prop}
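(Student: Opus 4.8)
I would first reduce everything to the Demazure crystals $\mathcal{B}_w(\lambda)$ and recover the opposite versions at the end. For that reduction, recall $\widetilde{w_0} V_w(\lambda) = V^{w_0 w}(\lambda)$; dualizing through $H^0(X_w, \mathcal{L}_\lambda) \simeq V_w(\lambda)^\ast$ and $H^0(X^{w_0 w}, \mathcal{L}_\lambda) \simeq V^{w_0 w}(\lambda)^\ast$ identifies the two sub-bases of the dual canonical basis singled out in \cref{p:characterization_of_Demazure_crystals}, and at the level of $\mathcal{B}(\lambda)$ this is implemented by the Lusztig--Sch\"utzenberger involution $b \mapsto b^\vee$, which satisfies ${\rm wt}(b^\vee) = w_0 {\rm wt}(b)$, $\tilde{e}_i b^\vee = (\tilde{f}_{i^\ast} b)^\vee$ and $\tilde{f}_i b^\vee = (\tilde{e}_{i^\ast} b)^\vee$ (with $\alpha_{i^\ast} = -w_0 \alpha_i$), and carries $\mathcal{B}_w(\lambda)$ onto $\mathcal{B}^{w_0 w}(\lambda)$. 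Granting this, $\tilde{f}_i \mathcal{B}^{w_0 w}(\lambda) = (\tilde{e}_{i^\ast} \mathcal{B}_w(\lambda))^\vee \subseteq \mathcal{B}^{w_0 w}(\lambda) \cup \{0\}$ is (1) for opposite Demazure crystals, and the second equality of (2) follows likewise from the first; replacing $w_0 w$ by $w$ puts these in the stated form. (Alternatively one repeats the induction below verbatim for $\mathcal{B}^w(\lambda)$, using $X^{s_i w} = \overline{P_i^- \widetilde{w} B/B}$ when $s_i w < w$.)

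It then remains to treat $\mathcal{B}_w(\lambda)$, which I would do by induction on $\ell(w)$, proving (1) and the first equality of (2) simultaneously. If $w = e$ then $X_e$ is a point and $V_e(\lambda) = \c v_\lambda$ is the highest weight line, so $\mathcal{B}_e(\lambda)$ is the singleton consisting of the highest weight element of $\mathcal{B}(\lambda)$, which $\tilde{e}_i$ annihilates for every $i$; this is the base case. For the inductive step, fix $i \in I$ with $s_i w < w$ and put $w' \coloneqq s_i w$, so $\ell(w') = \ell(w) - 1$. The geometric recursion $X_w = \overline{P_i \cdot X_{w'}}$ (minimal parabolic $P_i \supseteq B$) dualizes to
\[ V_w(\lambda) = \sum_{k \geq 0} f_i^{(k)} V_{w'}(\lambda); \]
here the inclusion $\subseteq$ holds because the right-hand side is a $\mathfrak{b}$-submodule of $V(\lambda)$ containing the extremal vector $\widetilde{w} v_\lambda \in \c\, f_i^{(\langle w'\lambda,\, h_i\rangle)} \widetilde{w'} v_\lambda$, while $\supseteq$ holds because $V_{w'}(\lambda) \subseteq V_w(\lambda)$ and $V_w(\lambda)$ is $\mathfrak{p}_i$-stable --- hence $f_i$-stable --- because $s_i w < w$. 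Dually, the restriction $H^0(X_w, \mathcal{L}_\lambda) \to H^0(X_{w'}, \mathcal{L}_\lambda)$ is surjective (Schubert varieties are normal and $\mathcal{L}_\lambda$ is globally generated), and correspondingly $\mathcal{B}_{w'}(\lambda) \subseteq \mathcal{B}_w(\lambda)$.

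I would then transport the module identity to crystals. Passing to $U_q(\mathfrak{g})$, where $V(\lambda)$ carries its crystal lattice $L(\lambda)$ and canonical basis $\{G_\lambda(b)\}_{b \in \mathcal{B}(\lambda)}$ (dual to $\{G_{\lambda}^{\rm up}(b)\}$, specializing to the classical one at $q = 1$), the Demazure module $V_w(\lambda)_q \coloneqq U_q(\mathfrak{b}) v_{w\lambda}$ is compatible with the crystal lattice $L(\lambda)$ and spanned by $\{G_\lambda(b) \mid b \in \mathcal{B}_w(\lambda)\}$ --- the dual form of \cref{p:characterization_of_Demazure_crystals} --- and the same argument yields $V_w(\lambda)_q = \sum_{k \geq 0} f_i^{(k)} V_{w'}(\lambda)_q$. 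By the inductive $\tilde{e}_i$-stability of $\mathcal{B}_{w'}(\lambda)$, this set meets every $i$-string in a top segment; the rank-one ($\mathfrak{sl}_2$) analysis of crystal lattices then shows that applying $\sum_{k \geq 0} f_i^{(k)}(-)$ completes each such segment to the entire $i$-string at $q = 0$. Hence $\mathcal{B}_w(\lambda) = \bigcup_{k \geq 0} \tilde{f}_i^{\,k} \mathcal{B}_{w'}(\lambda) \setminus \{0\}$, which is the first equality of (2), and since $\mathcal{B}_w(\lambda)$ is now a union of complete $i$-strings, $\tilde{e}_i \mathcal{B}_w(\lambda) \subseteq \mathcal{B}_w(\lambda) \cup \{0\}$, i.e. the case $j = i$ of (1).

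The hard part will be the remaining piece of (1) needed to close the induction: that $\mathcal{B}_w(\lambda)$ is $\tilde{e}_j$-stable for $j \neq i$. This does not follow from the recursion (2) alone and requires genuine information on how crystal operators in different directions interact. I would extract it from a crystal embedding $\mathcal{B}(\lambda) \hookrightarrow \mathcal{B}(\infty) \otimes T_\lambda$ compatible with the map $\pi_\lambda$ of \cref{p:relations_between_crystals} (here $T_\lambda$ is the one-element crystal of weight $\lambda$), using the explicit tensor product rule to express $\tilde{e}_j(\tilde{f}_i^{\,k} b)$ through $\tilde{e}_j b$ and the integers $\varepsilon_i(b), \varphi_i(b)$; Littelmann's path model, in which $\mathcal{B}_w(\lambda)$ is realized by a manifestly $\tilde{e}_j$-stable family of Lakshmibai--Seshadri paths, or Joseph's functorial operators on $U_q(\mathfrak{sl}_2^{(i)})$-integrable modules would serve equally well. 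This $\tilde{e}_j$-closure is the technical core of the statement --- it is in essence \cite[Proposition 3.2.3]{Kas4} --- and once it is available the induction goes through and, combined with the first paragraph, delivers the opposite-Demazure assertions as well.
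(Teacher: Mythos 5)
The paper does not supply its own proof of this proposition: the bracketed tag in the statement attributes it to \cite[Propositions 3.2.3 (ii), (iii) and 4.2]{Kas4}, and the result is simply imported. So there is no in-paper argument to compare against; your proposal has to be judged as a reconstruction of Kashiwara's proof.

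On that score the scaffolding is sound. The Lusztig--Sch\"utzenberger reduction from $\mathcal{B}^w(\lambda)$ to $\mathcal{B}_{w_0 w}(\lambda)$ is correct (and since $\vee$ interchanges $\tilde e_i$ with $\tilde f_{i^\ast}$ it does convert the $\tilde f_i$-assertions into the $\tilde e_i$-assertions exactly as you say), the recursion $V_w(\lambda) = \sum_{k \geq 0} f_i^{(k)} V_{s_i w}(\lambda)$ is standard, and working over $U_q$ with the crystal lattice is the right framework for transporting it. But you have correctly flagged that the argument does not close: the inductive step at $w = s_i w'$ yields $\tilde e_i$-stability only for the single index $i$ you peel off, whereas continuing the induction requires $\tilde e_j$-stability of $\mathcal{B}_w(\lambda)$ for every $j \in I$. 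The rank-one picture ($V_w(\lambda)_q$ decomposing into complete $U_q(\mathfrak{sl}_2^{(i)})$-strings) handles $j = i$; the $j \neq i$ direction is precisely where the substance of \cite[Proposition 3.2.3]{Kas4} lies --- Kashiwara proves compatibility of $V_w(\lambda)_q$ with the crystal lattice and the operator stability \emph{simultaneously} by a somewhat delicate induction, whereas you assume the compatibility (\cref{p:characterization_of_Demazure_crystals}) and then want to deduce the stability, which is a cleaner organization but still leaves the $j \neq i$ case open. Your three candidate mechanisms (the $\mathcal{B}(\infty) \otimes T_\lambda$ string bookkeeping, Littelmann's LS paths, Joseph's functorial $U_q(\mathfrak{sl}_2)$-closure) do all work, and the LS-path realization is probably the most self-contained way to fill the hole. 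As it stands, the gap you identify is the entire technical content of the proposition; since the paper offloads exactly the same content to \cite{Kas4}, this is a defensible stopping point, but you should present what you have as a reduction of the statement to Kashiwara's $\tilde e_j$-closure theorem rather than as a proof.
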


For $v, w \in W$ such that $v \leq w$, let $\pi_w^v \colon H^0(G/B, \mathcal{L}_\lambda) \rightarrow H^0(X_w^v, \mathcal{L}_\lambda)$ denote the restriction map. 
Then the kernel of $\pi_w^v$ coincides with the sum of kernels of $\pi_w$ and $\pi^v$. 
Hence we have
\begin{align*}
H^0(X_w^v, \mathcal{L}_\lambda) \simeq (V_w (\lambda) \cap V^v (\lambda))^\ast
\end{align*}
as $H$-modules.
In addition, the set $\{\pi_w^v (G^{\rm up} _\lambda(b)) \mid b \in \mathcal{B}_w(\lambda) \cap \mathcal{B}^v(\lambda)\}$ forms a $\mathbb{C}$-basis of $H^0(X_w^v, \mathcal{L}_\lambda)$ and the equality $\pi_w^v (G^{\rm up} _\lambda(b)) = 0$ holds for all $b \in \mathcal{B}(\lambda) \setminus (\mathcal{B}_w(\lambda) \cap \mathcal{B}^v(\lambda))$.

\subsection{String polytopes and Nakashima--Zelevinsky polytopes}

In this subsection, we recall some polyhedral parametrizations of crystal bases, following \cite{BZ, Kas4, Lit, Nak, NZ}. 
Set $N \coloneqq \ell(w_0) = \dim_\c (G/B)$.

\begin{defi}[{see \cite[Section 1]{Lit}}]\label{d:string_parametrization}
Let ${\bm i} = (i_1, i_2, \ldots, i_N) \in R(w_0)$, and $\lambda \in P_+$.
For $b \in \mathcal{B}(\lambda)$, define $\Phi_{\bm i} (b) = (a_1, a_2, \ldots, a_N) \in \mathbb{Z}_{\geq 0}^N$ by 
\[a_k \coloneqq \max\{a \in \mathbb{Z}_{\geq 0} \mid \tilde{e}_{i_k} ^a \tilde{e}_{i_{k-1}} ^{a_{k-1}} \cdots \tilde{e}_{i_1} ^{a_1} b \neq 0\}\quad {\rm for}\ 1 \leq k \leq N.\]
It is called \emph{Berenstein--Littelmann--Zelevinsky's string parametrization} associated with ${\bm i}$. 
\end{defi}

The map $\Phi_{\bm i} \colon \mathcal{B}(\lambda) \rightarrow \mathbb{Z}_{\geq 0} ^N$ is indeed injective. 

\begin{defi}[{see \cite[Definition 3.5]{Kav} and \cite[Section 1]{Lit}}]\label{d:string_polytopes}
For ${\bm i} \in R(w_0)$ and $\lambda \in P_+$, define a subset $\mathcal{S}_{\bm i} (\lambda) \subseteq \mathbb{Z}_{>0} \times \mathbb{Z}^N$ by 
\[\mathcal{S}_{\bm i} (\lambda) \coloneqq \bigcup_{k \in \mathbb{Z}_{>0}} \{(k, \Phi_{\bm i}(b)) \mid b \in \mathcal{B}(k\lambda)\}.\] 
We denote by $\mathcal{C}_{\bm i} (\lambda) \subseteq \mathbb{R}_{\geq 0} \times \mathbb{R}^N$ the smallest real closed cone containing $\mathcal{S}_{\bm i} (\lambda)$. 
Then let us define a subset $\Delta_{\bm i} (\lambda) \subseteq \mathbb{R}^N$ by 
\[\Delta_{\bm i} (\lambda) \coloneqq \{{\bm a} \in \mathbb{R}^N \mid (1, {\bm a}) \in \mathcal{C}_{\bm i} (\lambda)\},\] 
which is called \emph{Berenstein--Littelmann--Zelevinsky's string polytope}.
\end{defi}

\begin{prop}[{see \cite[Section 3.2]{BZ} and \cite[Section 1]{Lit}}]\label{string lattice points}
For ${\bm i} \in R(w_0)$ and $\lambda \in P_+$, the string polytope $\Delta_{\bm i} (\lambda)$ is a rational convex polytope, and it holds that $\Delta_{\bm i} (\lambda) \cap \mathbb{Z}^N = \Phi_{\bm i} (\mathcal{B}(\lambda))$.
\end{prop}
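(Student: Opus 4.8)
The plan is to identify $\Phi_{\bm i}(\mathcal{B}(\lambda))$ with the set of lattice points of an explicitly defined rational convex polytope and then to check that this polytope coincides with $\Delta_{\bm i}(\lambda)$ of \cref{d:string_polytopes}. As a first step I would pass from $\mathcal{B}(\lambda)$ to $\widetilde{\mathcal{B}}(\lambda) \subseteq \mathcal{B}(\infty)$: by \cref{p:relations_between_crystals} the restriction $\pi_\lambda \colon \widetilde{\mathcal{B}}(\lambda) \rightarrow \mathcal{B}(\lambda)$ is a bijection, and one checks, using that $\pi_\lambda$ intertwines the operators $\tilde{f}_i$, that it preserves string data, i.e.\ $\Phi_{\bm i}(\pi_\lambda(b)) = \Phi_{\bm i}(b)$ for all $b \in \widetilde{\mathcal{B}}(\lambda)$; the point is that at each stage of \cref{d:string_parametrization} the quantity $\varepsilon_{i_k}$ read off from $\pi_\lambda(b)$ agrees with the one read off from $b$. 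Hence $\Phi_{\bm i}(\mathcal{B}(\lambda)) = \Phi_{\bm i}(\widetilde{\mathcal{B}}(\lambda))$, and it suffices to describe the image of $\widetilde{\mathcal{B}}(\lambda)$ in $\mathbb{Z}^N$.

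The heart of the matter is the following polyhedral input, which I would invoke from \cite{BZ} and \cite{Lit} rather than reprove in full: (a) the \emph{string cone} $\mathcal{C}_{\bm i} \subseteq \mathbb{R}^N$, characterized by $\Phi_{\bm i}(\mathcal{B}(\infty)) = \mathcal{C}_{\bm i} \cap \mathbb{Z}^N$, is a rational polyhedral cone --- this is proved by induction on the length of Weyl group elements, removing the letter $i_1$ from the reduced word and reading off the coupling inequalities between consecutive string coordinates from rank-two computations; (b) there is a finite system of linear inequalities $L_j(\bm{a}) \leq \langle \lambda, h_{k_j} \rangle$ such that $\Phi_{\bm i}(\widetilde{\mathcal{B}}(\lambda))$ consists precisely of those $\bm{a} \in \mathcal{C}_{\bm i} \cap \mathbb{Z}^N$ satisfying all of them. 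Granting this, set $P_{\bm i}(\lambda) \coloneqq \{\bm{a} \in \mathcal{C}_{\bm i} \mid L_j(\bm{a}) \leq \langle \lambda, h_{k_j} \rangle \text{ for all } j\}$, so that $P_{\bm i}(\lambda) \cap \mathbb{Z}^N = \Phi_{\bm i}(\mathcal{B}(\lambda))$ by the first paragraph. Since $\mathcal{B}(\lambda)$ is finite and $\Phi_{\bm i}$ is injective, $P_{\bm i}(\lambda)$ is a rational polyhedron with only finitely many lattice points; a rational polyhedron with unbounded recession cone contains infinitely many lattice points, so $P_{\bm i}(\lambda)$ is bounded, hence a rational convex polytope. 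Moreover $\mathcal{C}_{\bm i}$ is a cone and the bounds $\langle \lambda, h_{k_j} \rangle$ are linear in $\lambda$, whence $P_{\bm i}(k\lambda) = k\, P_{\bm i}(\lambda)$ and therefore $P_{\bm i}(k\lambda) \cap \mathbb{Z}^N = \Phi_{\bm i}(\mathcal{B}(k\lambda))$ for every $k \in \mathbb{Z}_{>0}$.

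Next I would match $P_{\bm i}(\lambda)$ with $\Delta_{\bm i}(\lambda)$. From the preceding equalities, $\mathcal{S}_{\bm i}(\lambda) = \bigcup_{k \in \mathbb{Z}_{>0}} \{k\} \times (k P_{\bm i}(\lambda) \cap \mathbb{Z}^N)$. Each rational point $\bm{a} \in P_{\bm i}(\lambda)$ of denominator $k$ contributes $(k, k\bm{a}) \in \mathcal{S}_{\bm i}(\lambda)$, and rational points are dense in the rational polytope $P_{\bm i}(\lambda)$, so the smallest real closed cone $\mathcal{C}_{\bm i}(\lambda)$ containing $\mathcal{S}_{\bm i}(\lambda)$ equals the cone $\mathbb{R}_{\geq 0} \cdot (\{1\} \times P_{\bm i}(\lambda))$ over $P_{\bm i}(\lambda)$. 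Intersecting with the hyperplane $\{1\} \times \mathbb{R}^N$ gives $\Delta_{\bm i}(\lambda) = P_{\bm i}(\lambda)$, which is thus a rational convex polytope, and $\Delta_{\bm i}(\lambda) \cap \mathbb{Z}^N = P_{\bm i}(\lambda) \cap \mathbb{Z}^N = \Phi_{\bm i}(\mathcal{B}(\lambda))$, as desired.

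The main obstacle is the polyhedral input (a)--(b): the rank-two analysis producing the string cone inequalities together with the $\lambda$-dependent ones. Once this is available, the compatibility of $\Phi_{\bm i}$ with $\pi_\lambda$, the scaling $P_{\bm i}(k\lambda) = k P_{\bm i}(\lambda)$, and the passage between a rational polytope and its cone are all routine. An alternative route would be to prove directly that $\mathcal{S}_{\bm i}(\lambda)$ is a finitely generated and saturated semigroup --- the semigroup property reflecting a valuation-type additivity of $\Phi_{\bm i}$ under tensor products of crystals, as in the Newton--Okounkov body realization of \cite{Kav} --- but the finite generation again ultimately rests on the same polyhedral input, so I would proceed as above.
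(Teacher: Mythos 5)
Your proposal is correct, and it takes essentially the same route as the paper: the paper states this proposition without proof, citing exactly the polyhedral results of \cite[Section 3.2]{BZ} and \cite[Section 1]{Lit} that you invoke as inputs (a)--(b), namely the string cone description of $\Phi_{\bm i}(\mathcal{B}(\infty))$ and the $\lambda$-linear inequalities cutting out $\Phi_{\bm i}(\mathcal{B}(\lambda))$. Your additional bookkeeping --- compatibility of string data with $\pi_\lambda$, the scaling $P_{\bm i}(k\lambda) = kP_{\bm i}(\lambda)$, and the identification of the closed cone over $\mathcal{S}_{\bm i}(\lambda)$ with the cone over $P_{\bm i}(\lambda)$ --- is sound and is exactly the routine bridge between the paper's cone-based definition of $\Delta_{\bm i}(\lambda)$ and the inequality description in the cited references.
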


Caldero \cite{Cal} constructed a family of toric degenerations of $G/B$ using string polytopes. 
These toric degenerations were geometrically interpreted by Kaveh \cite{Kav} from the theory of Newton--Okounkov bodies.

\begin{thm}[{see \cite[Section 0.2]{Cal} and \cite[Corollary 4.2]{Kav}}]
For ${\bm i} \in R(w_0)$ and $\lambda \in P_+$, the string polytope $\Delta_{\bm i} (\lambda)$ is identical to a Newton--Okounkov body $\Delta(G/B, \mathcal{L}_\lambda, v, \tau)$ of $(G/B, \mathcal{L}_\lambda)$ associated with some $v$ and $\tau$. 
In addition, if $\lambda \in P_{++}$, then there exists a toric degeneration of $G/B$ to the irreducible normal projective toric variety $X(\Delta_{\bm i} (\lambda))$ corresponding to $\Delta_{\bm i} (\lambda)$.
\end{thm}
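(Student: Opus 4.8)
The plan is to realize the string polytope $\Delta_{\bm i}(\lambda)$ as a Newton--Okounkov body $\Delta(G/B,\mathcal L_\lambda,v,\tau)$ by transporting the lowest term valuation of \cref{ex:lowest_term_valuation} to $\mathbb C(G/B)$ through a birational chart on $G/B$ built from the reduced word $\bm i$, and then to identify the value semigroup with $\mathcal S_{\bm i}(\lambda)$ by means of the dual canonical basis of $H^0(G/B,\mathcal L_\lambda^{\otimes k})$. The resulting degeneration is the one of Caldero, reinterpreted à la Kaveh.

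First I would fix a birational isomorphism $\mathbb C(G/B)\simeq\mathbb C(t_1,\dots,t_N)$ adapted to $\bm i$: one identifies $U^-$ with an affine open subset of $G/B$ as in \cref{p:relation_between_crystals_G/B}, and uses $\bm i$ (or its reverse, depending on conventions) to give $U^-$ coordinates $t_1,\dots,t_N$. I then pick a total order $\leq$ on $\mathbb Z^N$ of lexicographic type adapted to this chart and let $v\coloneqq v^{\mathrm{low}}_{\leq}$ be the corresponding lowest term valuation pulled back to $\mathbb C(G/B)$; since a field isomorphism preserves leaf dimensions, $v$ has $1$-dimensional leaves. Set $\tau\coloneqq\tau_\lambda=G^{\mathrm{up}}_\lambda(b_\lambda)$. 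By \cref{p:relation_between_crystals_G/B}(1), for every $k\in\mathbb Z_{>0}$ and every $\sigma\in H^0(G/B,\mathcal L_\lambda^{\otimes k})=H^0(G/B,\mathcal L_{k\lambda})$ the quotient $\sigma/\tau_\lambda^k$ restricts to a regular function on $U^-$, so $v(\sigma/\tau_\lambda^k)$ is well defined.

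The key step — and the \emph{main obstacle} — is the identity
\[v\bigl(\iota_{k\lambda}(G^{\mathrm{up}}_{k\lambda}(b))\bigr)=\Phi_{\bm i}(b)\qquad\text{for all }k\in\mathbb Z_{>0}\ \text{and}\ b\in\mathcal B(k\lambda),\]
with $\iota_{k\lambda}$ as in \cref{p:relation_between_crystals_G/B}. It asserts that the lowest term, in the coordinates attached to $\bm i$, of a dual canonical basis element of $\mathbb C[U^-]$ is recorded by Berenstein--Littelmann--Zelevinsky's string parametrization; this is essentially the content of \cite{BZ} and \cite{Lit}, and it is where the representation-theoretic input enters. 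Granting it, the remainder is formal. Since $\tau_\lambda^k$ is a lowest weight vector of $H^0(G/B,\mathcal L_{k\lambda})\simeq V(k\lambda)^\ast$, it equals $G^{\mathrm{up}}_{k\lambda}(b_{k\lambda})$ up to a nonzero scalar, whence $v(\sigma/\tau_\lambda^k)=v(\iota_{k\lambda}(\sigma))$ for all $\sigma$. The set $\{G^{\mathrm{up}}_{k\lambda}(b)\mid b\in\mathcal B(k\lambda)\}$ is a $\mathbb C$-basis of $H^0(G/B,\mathcal L_\lambda^{\otimes k})$, and its $v$-values $\Phi_{\bm i}(b)$ are pairwise distinct because $\Phi_{\bm i}$ is injective; hence \cref{p:property_valuation} gives $\{v(\sigma/\tau_\lambda^k)\mid\sigma\in H^0(G/B,\mathcal L_\lambda^{\otimes k})\setminus\{0\}\}=\Phi_{\bm i}(\mathcal B(k\lambda))$. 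Taking the union over $k$ yields $S(G/B,\mathcal L_\lambda,v,\tau_\lambda)=\mathcal S_{\bm i}(\lambda)$, and therefore $C(G/B,\mathcal L_\lambda,v,\tau_\lambda)=\mathcal C_{\bm i}(\lambda)$ and $\Delta(G/B,\mathcal L_\lambda,v,\tau_\lambda)=\Delta_{\bm i}(\lambda)$, which is the first assertion.

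For the second assertion, assume $\lambda\in P_{++}$, so $\mathcal L_\lambda$ is very ample; by \cref{t:toric_deg} it suffices to show that $S(G/B,\mathcal L_\lambda,v,\tau_\lambda)=\mathcal S_{\bm i}(\lambda)$ is finitely generated and saturated. By \cref{string lattice points} the level-$k$ slice of $\mathcal S_{\bm i}(\lambda)$ equals $\Phi_{\bm i}(\mathcal B(k\lambda))=\Delta_{\bm i}(k\lambda)\cap\mathbb Z^N$, and since string polytopes scale linearly in the weight, $\Delta_{\bm i}(k\lambda)=k\,\Delta_{\bm i}(\lambda)$. Consequently $\mathcal C_{\bm i}(\lambda)$ is the cone over the rational convex polytope $\Delta_{\bm i}(\lambda)$, hence a rational polyhedral cone, and $\mathcal S_{\bm i}(\lambda)=\mathcal C_{\bm i}(\lambda)\cap(\mathbb Z_{>0}\times\mathbb Z^N)$, so $\mathcal S_{\bm i}(\lambda)\cup\{(0,0)\}$ is precisely the semigroup of lattice points of a rational polyhedral cone. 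Such a semigroup is finitely generated by Gordan's lemma and saturated by construction, so \cref{t:toric_deg} produces a toric degeneration of $G/B$ to $X(\Delta_{\bm i}(\lambda))$, completing the argument. In summary, the only nontrivial ingredient is the lowest term identity for dual canonical basis elements; passing from it to the Newton--Okounkov body and then to the toric degeneration is bookkeeping with valuations together with the polyhedral description of string polytopes.
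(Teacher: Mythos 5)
First, a point of comparison: the paper does not prove this theorem at all — it is imported from Caldero and Kaveh — so the only proof to measure your proposal against is the one in \cite{Kav}, and your outline does follow that route (a lowest term valuation attached to a coordinate chart on $U^-$ adapted to ${\bm i}$, identification of the value semigroup via the dual canonical basis, then Gordan's lemma and \cref{t:toric_deg}). The formal parts of your argument are correct: replacing $\tau_\lambda^k$ by $\tau_{k\lambda}$ up to a scalar, computing the set of values on $H^0(G/B,\mathcal{L}_{k\lambda})$ from the values on a basis via \cref{p:property_valuation} and the injectivity of $\Phi_{\bm i}$, and deducing finite generation and saturation of the value semigroup from \cref{string lattice points} together with the polyhedrality and scaling behaviour of the string cone.

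The genuine gap is the step you yourself isolate as the main obstacle: the identity $v\bigl(\iota_{k\lambda}(G^{\rm up}_{k\lambda}(b))\bigr)=\Phi_{\bm i}(b)$ is not a known input you may quote from \cite{BZ} or \cite{Lit} — those papers supply the string parametrization and the polyhedral description of the string cone, but not the statement that a lowest term valuation in ${\bm i}$-adapted coordinates computes $\Phi_{\bm i}$ on dual canonical basis elements. That statement is precisely the nontrivial content of the cited theorem itself (it is Kaveh's main result; in the framework of the present paper the analogous identity, for the basis $\Xi^{\rm up}_\lambda$, is $\tilde{v}^{\rm low}_{\bm i}(\iota_\lambda(\Xi^{\rm up}_\lambda(b)))=\Phi_{\bm i}(b)$, quoted from \cite{FO} in the proof of \cref{p:g-vectors_for_Richardson}). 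Moreover the identity is sensitive to data your proposal leaves unspecified (``or its reverse, depending on conventions''): the precise parametrization of $U^-$ by $t_1,\dots,t_N$, the choice of lexicographic order, and lowest versus highest term all matter, and a wrong choice makes the identity false. So as a self-contained argument the proposal reduces the theorem to its essential ingredient without establishing it; citing \cite{Kav} for that ingredient would make it correct, but then it coincides with the citation the paper already gives rather than constituting an independent proof.
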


Fix ${\bm i} = (i_1, i_2, \ldots, i_N) \in R(w_0)$, and consider a sequence ${\bm j} = (\ldots, j_k, \ldots, j_{N+1}, j_N, \ldots, j_1)$ of elements in $I$ such that $j_k = i_{N-k+1}$ for $1 \leq k \leq N$, $j_k \neq j_{k+1}$ for all $k \in \z_{\geq 1}$, and the cardinality of $\{k \in \z_{\geq 1} \mid j_k = i\}$ is $\infty$ for every $i \in I$. 
Following \cite{Kas4, NZ}, we associate to ${\bm j}$ a crystal structure on 
\[\z^{\infty} \coloneqq \{(\ldots, a_k, \ldots, a_2, a_1) \mid a_k \in \mathbb{Z}\ {\rm for}\ k \in \z_{\geq 1}\ {\rm and}\ a_k = 0\ {\rm for}\ k \gg 0\}\] 
as follows. 
For $k \in \z_{\geq 1}$, $i \in I$, and ${\bm a} = (\ldots, a_\ell, \ldots, a_2, a_1) \in \z^\infty$, set 
\begin{align*}
&\sigma_k({\bm a}) \coloneqq a_k + \sum_{\ell > k} c_{j_k, j_\ell} a_\ell \in \mathbb{Z},\\
&\sigma^{(i)}({\bm a}) \coloneqq \max\{\sigma_\ell({\bm a}) \mid \ell \in \z_{\geq 1},\ j_\ell = i\} \in \mathbb{Z},\ {\rm and}\\
&M^{(i)}({\bm a}) \coloneqq \{\ell \in \z_{\geq 1} \mid j_\ell = i,\ \sigma_\ell({\bm a}) = \sigma^{(i)}({\bm a})\}.
\end{align*}
Since $a_\ell = 0$ for $\ell \gg 0$, the integers $\sigma_k({\bm a}), \sigma^{(i)}({\bm a})$ are well-defined, and we have $\sigma^{(i)}({\bm a}) \geq 0$. 
In addition, if $\sigma^{(i)}({\bm a}) > 0$, then $M^{(i)}({\bm a})$ is a finite set. 
Define a crystal structure on $\z^\infty$ by 
\begin{align*}
&{\rm wt}({\bm a}) \coloneqq - \sum_{k = 1} ^\infty a_k \alpha_{j_k},\ \varepsilon_i({\bm a}) \coloneqq \sigma^{(i)}({\bm a}),\ \varphi_i({\bm a}) \coloneqq \varepsilon_i ({\bm a}) + \langle {\rm wt}({\bm a}), h_i \rangle,\ {\rm and}\\
&\tilde{e}_i {\bm a} \coloneqq 
\begin{cases}
(a_k - \delta_{k, \max M^{(i)}({\bm a})})_{k \in \z_{\geq 1}} &{\rm if}\ \sigma^{(i)}({\bm a}) > 0,\\
0 &{\rm otherwise},
\end{cases}\\
&\tilde{f}_i {\bm a} \coloneqq (a_k + \delta_{k, \min M^{(i)}({\bm a})})_{k \in \z_{\geq 1}} 
\end{align*}
for $i \in I$ and ${\bm a} = (\ldots, a_k, \ldots, a_2, a_1) \in \mathbb{Z}^\infty$, where $\delta_{k, \ell}$ is the Kronecker delta; we denote this crystal by $\z^\infty _{\bm j}$. 

\begin{prop}[{see \cite[Section 2.4]{NZ}}]\label{p:polyhedral realizations}
There exists a unique strict embedding of crystals $\widetilde{\Psi}_{\bm j} \colon \mathcal{B} (\infty) \lhook\joinrel\rightarrow \mathbb{Z}^\infty _{\bm j}$ such that $\widetilde{\Psi}_{\bm j} (b_\infty) = (\ldots, 0, \ldots, 0, 0)$.
In addition, if $(\ldots, a_k, \ldots, a_2, a_1) \in \widetilde{\Psi}_{\bm j} (\mathcal{B} (\infty))$, then $a_k = 0$ for all $k > N$.
\end{prop}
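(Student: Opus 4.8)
The plan is to produce $\widetilde{\Psi}_{\bm j}$ as a stabilized composition of Kashiwara's elementary crystal embeddings, and then to match the resulting crystal structure on $\z^\infty$ with that of $\z^\infty_{\bm j}$. For $i \in I$, recall the \emph{elementary crystal} $B_i = \{b_i(x) \mid x \in \z\}$, on which ${\rm wt}(b_i(x)) = -x\alpha_i$, $\varepsilon_i(b_i(x)) = x$, $\varphi_i(b_i(x)) = -x$, $\tilde{e}_i b_i(x) = b_i(x-1)$, $\tilde{f}_i b_i(x) = b_i(x+1)$, while $\varepsilon_j(b_i(x)) = \varphi_j(b_i(x)) = -\infty$ and $\tilde{e}_j b_i(x) = \tilde{f}_j b_i(x) = 0$ for $j \neq i$. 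Kashiwara constructed for each $i$ a unique strict embedding of crystals $\Psi_i \colon \mathcal{B}(\infty) \hookrightarrow \mathcal{B}(\infty) \otimes B_i$ with $\Psi_i(b_\infty) = b_\infty \otimes b_i(0)$, given explicitly by $\Psi_i(b) = (\tilde{e}_i^\ast)^{\varepsilon_i^\ast(b)}(b) \otimes b_i(\varepsilon_i^\ast(b))$, where $\ast$ is the Kashiwara involution on $\mathcal{B}(\infty)$, $\tilde{e}_i^\ast \coloneqq \ast \circ \tilde{e}_i \circ \ast$, and $\varepsilon_i^\ast \coloneqq \varepsilon_i \circ \ast$. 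Composing these along $\bm j = (\ldots, j_2, j_1)$ — first $\Psi_{j_1}$, then $\Psi_{j_2} \otimes {\rm id}$, and so on — yields for each $m \geq 1$ a strict embedding
\[\Psi^{(m)} \colon \mathcal{B}(\infty) \hookrightarrow \mathcal{B}(\infty) \otimes B_{j_m} \otimes B_{j_{m-1}} \otimes \cdots \otimes B_{j_1}\]
taking $b_\infty$ to $b_\infty \otimes b_{j_m}(0) \otimes \cdots \otimes b_{j_1}(0)$, whose $\mathcal{B}(\infty)$-component is the map obtained by applying to $b$ the maximal power of $\tilde{e}_{j_1}^\ast$, then the maximal power of $\tilde{e}_{j_2}^\ast$, and so on through $\tilde{e}_{j_m}^\ast$.

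The decisive point is that $(j_N, j_{N-1}, \ldots, j_1) = (i_1, i_2, \ldots, i_N)$ lies in $R(w_0)$, hence so does the reversed word $(j_1, j_2, \ldots, j_N)$ since $w_0^{-1} = w_0$. It is standard that for any reduced word of $w_0$, successively applying the maximal powers of the raising operators $\tilde{e}_i$ in that order carries every element of $\mathcal{B}(\infty)$ to $b_\infty$ (the $\mathcal{B}(\infty)$-analog of the injectivity of $\Phi_{\bm i}$ recorded after \cref{d:string_parametrization}, which one deduces from the $\mathcal{B}(\lambda)$-case for $\lambda \in P_{++}$ sufficiently dominant using \cref{p:relations_between_crystals}); conjugating by $\ast$ yields the same statement for the operators $\tilde{e}_i^\ast$, which is precisely what the $\mathcal{B}(\infty)$-component of $\Psi^{(N)}$ applies. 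Hence that component is identically $b_\infty$, i.e.\ $\Image(\Psi^{(N)}) \subseteq \{b_\infty\} \otimes B_{j_N} \otimes \cdots \otimes B_{j_1}$; and since $\Psi_{j_{N+1}}(b_\infty) = b_\infty \otimes b_{j_{N+1}}(0)$, and so on, the images $\Psi^{(m)}(b)$ stabilize for $m \geq N$, the tensor slots beyond the $N$-th carrying the label $0$. Recording the label $a_\ell \in \z_{\geq 0}$ of the $\ell$-th factor — with the sign fixed so that ${\rm wt}(b_{j_\ell}(a_\ell)) = -a_\ell\alpha_{j_\ell}$ is compatible with the weight function on $\z^\infty_{\bm j}$ — I would set $\widetilde{\Psi}_{\bm j}(b) \coloneqq (\ldots, 0, a_N, \ldots, a_2, a_1)$. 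This map is well defined, sends $b_\infty$ to $(\ldots, 0, 0, 0)$, and visibly takes values in $\z^\infty$ with $a_k = 0$ for all $k > N$, so the ``in addition'' assertion is immediate.

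Finally, $\widetilde{\Psi}_{\bm j}$ is injective and strict because each $\Psi^{(m)}$ is, so it remains only to verify that the crystal structure it carries on $\z^\infty$ coincides with $\z^\infty_{\bm j}$. This is a direct unwinding of Kashiwara's tensor-product rule for $\mathcal{B}(\infty) \otimes B_{j_m} \otimes \cdots \otimes B_{j_1}$: the recursion $\varepsilon_i(x \otimes y) = \max\{\varepsilon_i(x), \varepsilon_i(y) - \langle {\rm wt}(x), h_i \rangle\}$, applied with $\varepsilon_i(b_{j_\ell}(a_\ell)) = a_\ell$ for $j_\ell = i$ and ${\rm wt}(b_{j_p}(a_p)) = -a_p\alpha_{j_p}$, collapses to $\varepsilon_i = \max_{\ell \,:\, j_\ell = i}\bigl(a_\ell + \sum_{p > \ell}c_{j_\ell, j_p}a_p\bigr) = \sigma^{(i)}(\bm a)$, while the tie-breaking built into the tensor rule for $\tilde{e}_i$ (act as far to the left as possible) and for $\tilde{f}_i$ (act as far to the right as possible) singles out the slots $\max M^{(i)}(\bm a)$ and $\min M^{(i)}(\bm a)$ — exactly the operators defining $\z^\infty_{\bm j}$. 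Uniqueness of $\widetilde{\Psi}_{\bm j}$ then follows because any strict morphism commutes with all $\tilde{f}_i$ and $\mathcal{B}(\infty)$ is generated from $b_\infty$ by the $\tilde{f}_i$. I expect the main obstacle to be the decisive point of the second paragraph — isolating the reduced-word input that forces vanishing beyond coordinate $N = \ell(w_0)$, and combining it with the explicit description of $\Psi_i$ — together with the careful bookkeeping (the order of the tensor factors and the sign convention for the labels) needed to reproduce the exact shapes of $\sigma_\ell$, $\sigma^{(i)}$, and $M^{(i)}$.
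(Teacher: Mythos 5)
The paper offers no proof of this proposition, citing \cite[Section 2.4]{NZ} instead; your argument reproduces exactly the standard Nakashima--Zelevinsky construction — iterating Kashiwara's elementary embeddings $\Psi_i \colon \mathcal{B}(\infty) \hookrightarrow \mathcal{B}(\infty) \otimes B_i$ along ${\bm j}$, stabilizing via the reduced-word fact that the $\mathcal{B}(\infty)$-component collapses to $b_\infty$ after $N$ steps, and unwinding the tensor-product rule to recover $\sigma^{(i)}$ and $M^{(i)}$ — so it is correct and matches the cited source. One small imprecision: what you invoke as ``the $\mathcal{B}(\infty)$-analog of the injectivity of $\Phi_{\bm i}$'' is really the stronger statement that the endpoint of the string cascade along any reduced word for $w_0$ is always $b_\infty$ (equivalently, $b_\lambda$ in $\mathcal{B}(\lambda)$); injectivity is a consequence, not the hypothesis you need, though the reduction to the $\mathcal{B}(\lambda)$-case via $\pi_\lambda$ that you indicate does establish the right statement.
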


The embedding $\widetilde{\Psi}_{\bm j}$ is called the \emph{Kashiwara embedding} of $\mathcal{B}(\infty)$ with respect to ${\bm j}$. 

\begin{defi}[{see \cite[Theorem 3.2]{Nak} and \cite[Definition 3.8]{Fuj}}]
Define $\Psi_{\bm i} \colon \mathcal{B}(\lambda) \hookrightarrow \mathbb{Z}^N$, $b \mapsto (a_1, a_2, \ldots, a_N)$, by 
\[\widetilde{\Psi}_{\bm j} (\tilde{b}) = (\ldots, 0, 0, a_1, a_2, \ldots, a_N),\]
where $\tilde{b}$ is an element of $\widetilde{\mathcal{B}}(\lambda)$ determined by $\pi_\lambda (\tilde{b}) = b$; the map $\Psi_{\bm i}$ is also called a \emph{Kashiwara embedding}.
\end{defi}

The embedding $\Psi_{\bm i}$ is independent of the choice of an extension ${\bm j}$ of ${\bm i}$ by \cite[Section 2.4]{NZ}.

\begin{defi}[{see \cite[Definition 2.15]{FN} and \cite[Definition 3.9]{Fuj}}]
Let ${\bm i} \in R(w_0)$, and $\lambda \in P_+$. 
Replacing $\Phi_{\bm i}$ by $\Psi_{\bm i}$ in the definitions of $\mathcal{S}_{\bm i} (\lambda), \mathcal{C}_{\bm i} (\lambda), \Delta_{\bm i} (\lambda)$ in \cref{d:string_polytopes}, we define $\widetilde{\mathcal{S}}_{\bm i} (\lambda), \widetilde{\mathcal{C}}_{\bm i} (\lambda), \widetilde{\Delta}_{\bm i} (\lambda)$, respectively. 
The set $\widetilde{\Delta}_{\bm i} (\lambda)$ is called a \emph{Nakashima--Zelevinsky polytope}. 
\end{defi}

\begin{prop}[{\cite[Corollaries 2.20 and 4.3]{FN}}]\label{p:convexity}
For ${\bm i} \in R(w_0)$ and $\lambda \in P_+$, the Nakashima--Zelevinsky polytope $\widetilde{\Delta}_{\bm i} (\lambda)$ is a rational convex polytope, and the equality $\widetilde{\Delta}_{\bm i} (\lambda) \cap \z^N = \Psi_{\bm i} (\mathcal{B}(\lambda))$ holds.
\end{prop}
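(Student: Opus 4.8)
The statement has two parts: that $\widetilde\Delta_{\bm i}(\lambda)$ is a rational convex polytope, and that $\widetilde\Delta_{\bm i}(\lambda)\cap\z^N=\Psi_{\bm i}(\mathcal B(\lambda))$. The plan is to recognize $\widetilde\Delta_{\bm i}(\lambda)$ as a Newton--Okounkov body of $(G/B,\mathcal L_\lambda)$, which makes convexity and the inclusion $\Psi_{\bm i}(\mathcal B(\lambda))\subseteq\widetilde\Delta_{\bm i}(\lambda)\cap\z^N$ automatic, and then to feed in the polyhedral realization of the Kashiwara embedding to obtain rationality and the reverse inclusion.

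For the first step I would work inside $\c(G/B)\simeq\c(U^-)$, fix an extension ${\bm j}$ of ${\bm i}$ as above, and take on $\c(U^-)$ the lowest-term valuation $v_{\bm i}$, in the sense of \cref{ex:lowest_term_valuation}, attached to the coordinate system on $U^-$ read off from a product of root subgroups in the order prescribed by ${\bm i}$ (together with a suitable total order on $\z^N$). The key formula to establish is the leading-term identity
\[v_{\bm i}\bigl((G^{\rm up}_\lambda(b)/\tau_\lambda)|_{U^-}\bigr)=\Psi_{\bm i}(b)\qquad\text{for all }b\in\mathcal B(\lambda),\]
and its analogue at every level $k$ with $\mathcal B(k\lambda)$ in place of $\mathcal B(\lambda)$ (using $\mathcal L_\lambda^{\otimes k}\cong\mathcal L_{k\lambda}$); here \cref{p:relation_between_crystals_G/B} passes between sections of $\mathcal L_{k\lambda}$ on $G/B$ and elements of $\c[U^-]$, so this reduces to a statement about the lowest terms of dual canonical basis elements $G^{\rm up}(b)$, $b\in\mathcal B(\infty)$, being recorded by the Kashiwara embedding $\widetilde\Psi_{\bm j}$. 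Granting this, injectivity of $\Psi_{\bm i}$ on $\mathcal B(k\lambda)$ makes the values $\Psi_{\bm i}(b)$, $b\in\mathcal B(k\lambda)$, pairwise distinct, and since $\{G^{\rm up}_{k\lambda}(b)\mid b\in\mathcal B(k\lambda)\}$ is a $\c$-basis of $H^0(G/B,\mathcal L_{k\lambda})$, \cref{p:property_valuation} shows that the set of $v_{\bm i}$-values of nonzero sections of $\mathcal L_\lambda^{\otimes k}$ is exactly $\{\Psi_{\bm i}(b)\mid b\in\mathcal B(k\lambda)\}$. Hence $S(G/B,\mathcal L_\lambda,v_{\bm i},\tau_\lambda)=\widetilde{\mathcal S}_{\bm i}(\lambda)$; in particular $\widetilde{\mathcal S}_{\bm i}(\lambda)$ is a semigroup, $\widetilde\Delta_{\bm i}(\lambda)=\Delta(G/B,\mathcal L_\lambda,v_{\bm i},\tau_\lambda)$ is a convex body, and $(1,\Psi_{\bm i}(b))\in\widetilde{\mathcal S}_{\bm i}(\lambda)$ gives $\Psi_{\bm i}(\mathcal B(\lambda))\subseteq\widetilde\Delta_{\bm i}(\lambda)\cap\z^N$.

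It remains to prove that $\widetilde{\mathcal S}_{\bm i}(\lambda)$ is finitely generated and saturated: finite generation forces $\widetilde\Delta_{\bm i}(\lambda)$ to be a rational convex polytope, while saturation gives $\widetilde{\mathcal C}_{\bm i}(\lambda)\cap\z^{N+1}=\widetilde{\mathcal S}_{\bm i}(\lambda)$, so that every ${\bm a}\in\widetilde\Delta_{\bm i}(\lambda)\cap\z^N$ has $(1,{\bm a})\in\widetilde{\mathcal S}_{\bm i}(\lambda)$, i.e. ${\bm a}=\Psi_{\bm i}(b)$ for some $b\in\mathcal B(\lambda)$. For this I would invoke the polyhedral realization of the Kashiwara embedding, due to Nakashima--Zelevinsky: in finite type, for every ${\bm i}\in R(w_0)$ and every $\mu\in P_+$, the image $\Psi_{\bm i}(\mathcal B(\mu))\subseteq\z^N$ is cut out by finitely many linear inequalities whose only dependence on $\mu$ is a linear shift of the constant terms, the $\mu$-independent ones presenting $\widetilde\Psi_{\bm j}(\mathcal B(\infty))$ (which lies in $\z^N$ by \cref{p:polyhedral realizations}) as the set of integer points of a rational polyhedral cone, and the $\mu$-dependent ones transcribing the condition $b\in\widetilde{\mathcal B}(\mu)$ of \cref{p:relations_between_crystals} into the coordinates $\widetilde\Psi_{\bm j}$. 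Replacing $\mu$ by $k\mu$ then scales this inequality system, so $\widetilde{\mathcal S}_{\bm i}(\lambda)$ is the semigroup of integer points of a rational polyhedral cone in $\r^{N+1}$, hence finitely generated and saturated, completing the proof.

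The main obstacle is the pair of inputs just used: the leading-term identity --- which says that in the chosen coordinates on $U^-$ the lowest term of each dual canonical basis element is recorded by the Kashiwara embedding --- and the polyhedral realization --- that the Kashiwara-embedding image is the integer-point set of a rational polyhedral cone, with linear dependence on the highest weight. Both have classical counterparts for string parametrizations; the latter is \cref{string lattice points}, going back to \cite{BZ, Lit}, and carrying them over to the Kashiwara embedding is the technical content of \cite{FN}. The remaining ingredients --- the identification of the Newton--Okounkov semigroup, convexity, and the easy inclusion of lattice points --- are formal once the leading-term identity is in hand.
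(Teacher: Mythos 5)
Your overall strategy --- realize $\widetilde\Delta_{\bm i}(\lambda)$ as the Newton--Okounkov body of $(G/B,\mathcal L_\lambda)$ for a lowest-term valuation $v_{\bm i}$ read off from a product-of-root-subgroups coordinate system on $U^-$, establish the leading-term identity $v_{\bm i}(\iota_\lambda(G_\lambda^{\rm up}(b)))=\Psi_{\bm i}(b)$, and deduce from \cref{p:property_valuation} that the value semigroup at each level $k$ is $\{\Psi_{\bm i}(b)\mid b\in\mathcal B(k\lambda)\}$ --- is indeed the route taken in \cite{FN}, and the first half of your argument tracks FN Corollaries 4.2 and 4.3 closely.

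The place to be careful is the input you invoke for finite generation and saturation of $\widetilde{\mathcal S}_{\bm i}(\lambda)$. You formulate it as ``the polyhedral realization of the Kashiwara embedding, due to Nakashima--Zelevinsky,'' asserting that for every $\bm i\in R(w_0)$ the image $\Psi_{\bm i}(\mathcal B(\mu))$ is cut out by finitely many linear inequalities with only a linear shift in $\mu$. This is not a black-box result in that generality: the explicit polyhedral descriptions of \cite{NZ, Nak} are established only under a positivity assumption on the sequence $\bm j$, which fails for some reduced words already in low rank, and in the form you state it the claim is essentially the conclusion of the proposition (indeed of FN Corollary 2.20), so appealing to it risks circularity. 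What \cite{FN} actually does to prove that the semigroup $\widetilde{\mathscr S}_{\bm i}=\bigcup_\lambda\{\lambda\}\times\Psi_{\bm i}(\mathcal B(\lambda))$ is finitely generated and saturated is not to write down inequalities but to exploit the compatibility of the Kashiwara embedding $\widetilde\Psi_{\bm j}$ with the crystal tensor product, which yields a Minkowski-sum-type semigroup structure on $\widetilde{\mathscr S}_{\bm i}$, combined with a lattice-point count against $\dim V(k\lambda)$; this is precisely what bypasses the positivity assumption. If you replace your appeal to an a priori inequality description with that crystal-theoretic semigroup argument, the rest of your outline goes through.
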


The author and Naito \cite{FN} realized Nakashima--Zelevinsky polytopes as Newton--Okounkov bodies, which induce toric degenerations of $G/B$. 

\begin{thm}[{see \cite[Corollary 4.2]{FN} and \cite[Theorem 6.5]{Fuj}}]
For ${\bm i} \in R(w_0)$ and $\lambda \in P_+$, the Nakashima--Zelevinsky polytope $\widetilde{\Delta}_{\bm i} (\lambda)$ is identical to a Newton--Okounkov body $\Delta(G/B, \mathcal{L}_\lambda, v, \tau)$ of $(G/B, \mathcal{L}_\lambda)$ associated with some $v$ and $\tau$. 
In addition, if $\lambda \in P_{++}$, then there exists a toric degeneration of $G/B$ to the irreducible normal projective toric variety $X(\widetilde{\Delta}_{\bm i} (\lambda))$ corresponding to $\widetilde{\Delta}_{\bm i} (\lambda)$.
\end{thm}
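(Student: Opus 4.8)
The plan is to realize $\widetilde{\Delta}_{\bm i}(\lambda)$ as a Newton--Okounkov body $\Delta(G/B,\mathcal{L}_\lambda,v,\tau_\lambda)$ for an explicit valuation $v$ and the section $\tau_\lambda=G^{\rm up}_\lambda(b_\lambda)$, and then to deduce the toric degeneration from \cref{t:toric_deg}. The open embedding $U^-\hookrightarrow G/B$ gives an identification $\mathbb{C}(G/B)\simeq\mathbb{C}(U^-)$, and by \cref{p:relation_between_crystals_G/B} the assignment $\sigma\mapsto(\sigma/\tau_\lambda)|_{U^-}$ sends $G^{\rm up}_\lambda(\pi_\lambda(b))$ to the dual canonical basis element $G^{\rm up}(b)$ of $\mathbb{C}[U^-]$ for all $b\in\widetilde{\mathcal{B}}(\lambda)$; moreover $\tau_\lambda^k$ is a nonzero scalar multiple of the corresponding lowest weight section $\tau_{k\lambda}$, and the analogous facts hold with $\lambda$ replaced by $k\lambda$. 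Thus the whole problem reduces to producing a valuation $v^{\rm low}$ on $\mathbb{C}(U^-)$ with $1$-dimensional leaves such that $v^{\rm low}(G^{\rm up}(b))$ equals the image of $\widetilde{\Psi}_{\bm j}(b)$ in $\mathbb{Z}^N$ (via \cref{p:polyhedral realizations}) for every $b\in\mathcal{B}(\infty)$, where ${\bm j}$ is a fixed extension of ${\bm i}$.

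To construct such a valuation, fix a birational chart $\mathbb{C}^N\dashrightarrow U^-$ adapted to ${\bm i}$, built from the one-parameter root subgroups of $U^-$ associated with ${\bm i}$, and let $v^{\rm low}$ be the pullback along this chart of the lowest term valuation $v^{\rm low}_\leq$ of \cref{ex:lowest_term_valuation} with respect to a suitable total order on $\mathbb{Z}^N$ compatible with the order of coordinates in ${\bm j}$; this is automatically a valuation with $1$-dimensional leaves. The main step --- and the point I expect to be the real obstacle --- is to prove that in this chart each $G^{\rm up}(b)$ has a single lowest-order monomial, whose exponent is exactly $\widetilde{\Psi}_{\bm j}(b)$. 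This is a matching between the combinatorics of the Kashiwara operators $\tilde{f}_i$ on the crystal $\mathbb{Z}^\infty_{\bm j}$ and the multiplicative structure of the $G^{\rm up}(b)$ in the chosen coordinates; it is the heart of Nakashima's polyhedral realization and of the Newton--Okounkov description of $\widetilde{\Delta}_{\bm i}(\lambda)$, and in the cluster-theoretic setting it can alternatively be obtained from the valuations built from cluster structures to be recalled in \cref{s:Cluster_valuation}.

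Granting this, we compute the semigroup. For each $k>0$ the space $H^0(G/B,\mathcal{L}_{k\lambda})$ has the dual canonical basis $\{G^{\rm up}_{k\lambda}(b')\mid b'\in\mathcal{B}(k\lambda)\}$, and \cref{p:relation_between_crystals_G/B} together with the previous step gives $v(G^{\rm up}_{k\lambda}(b')/\tau_\lambda^k)=\Psi_{\bm i}(b')$ for every $b'$. Since $\Psi_{\bm i}$ is injective these values are pairwise distinct, so \cref{p:property_valuation} shows that the value of an arbitrary nonzero section in $H^0(G/B,\mathcal{L}_{k\lambda})$ is one of the $\Psi_{\bm i}(b')$. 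Hence
\[
S(G/B,\mathcal{L}_\lambda,v,\tau_\lambda)
=\bigcup_{k\in\mathbb{Z}_{>0}}\{(k,\Psi_{\bm i}(b'))\mid b'\in\mathcal{B}(k\lambda)\}
=\widetilde{\mathcal{S}}_{\bm i}(\lambda),
\]
and therefore $\Delta(G/B,\mathcal{L}_\lambda,v,\tau_\lambda)=\widetilde{\Delta}_{\bm i}(\lambda)$, which proves the first assertion.

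For the second assertion, suppose $\lambda\in P_{++}$, so $\mathcal{L}_\lambda$ is very ample; by \cref{t:toric_deg} it remains to check that $\widetilde{\mathcal{S}}_{\bm i}(\lambda)$ is finitely generated and saturated. From the definition of $\widetilde{\mathcal{C}}_{\bm i}(\lambda)$ one obtains $\widetilde{\Delta}_{\bm i}(k\lambda)=k\,\widetilde{\Delta}_{\bm i}(\lambda)$ for all $k>0$, and combining this with $\widetilde{\Delta}_{\bm i}(k\lambda)\cap\mathbb{Z}^N=\Psi_{\bm i}(\mathcal{B}(k\lambda))$ from \cref{p:convexity} shows that the level-$k$ part of $\widetilde{\mathcal{S}}_{\bm i}(\lambda)$ is precisely $\{k\}\times\big(k\,\widetilde{\Delta}_{\bm i}(\lambda)\cap\mathbb{Z}^N\big)$; in other words $\widetilde{\mathcal{S}}_{\bm i}(\lambda)$ is the set of all positive-level lattice points of the rational polyhedral cone $\widetilde{\mathcal{C}}_{\bm i}(\lambda)$. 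Saturation is then immediate, finite generation follows from Gordan's lemma, and hence ${\rm Proj}(\mathbb{C}[\widetilde{\mathcal{S}}_{\bm i}(\lambda)])\cong X(\widetilde{\Delta}_{\bm i}(\lambda))$, so \cref{t:toric_deg} yields the desired toric degeneration of $G/B$.
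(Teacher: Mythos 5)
The paper does not actually prove this theorem; it is cited verbatim from \cite[Corollary 4.2]{FN} and \cite[Theorem 6.5]{Fuj}. Your outline does follow the same route those references take, but as a proof it has a genuine gap exactly where you flag it. The entire mathematical content of the first assertion is the identity $v(G^{\rm up}(b)) = \widetilde{\Psi}_{\bm j}(b)$ (after the identification of $\mathbb{Z}^\infty_{\bm j}$-tails with $\mathbb{Z}^N$) for a suitable lowest-term valuation on the chart adapted to ${\bm i}$. You correctly name this as ``the real obstacle'' and observe that it is the heart of \cite{FN}, but you assert it rather than prove it, and everything downstream (the semigroup computation, the identification $S(G/B,\mathcal{L}_\lambda,v,\tau_\lambda)=\widetilde{\mathcal{S}}_{\bm i}(\lambda)$) is essentially formal once that step is granted. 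Proving it requires a nontrivial inductive argument matching the action of the Kashiwara operators $\tilde{f}_i$ on $\mathbb{Z}^\infty_{\bm j}$ with the behaviour of the dual canonical basis under a PBW-type expansion; the direction of the total order (lowest vs.\ highest term) and its compatibility with the extension ${\bm j}$ of ${\bm i}$ matter here and your sketch does not specify or justify them. A self-contained proof must fill this in; pointing to the cluster-theoretic valuations of \cref{s:Cluster_valuation} is also circular within this paper's narrative, since \cref{t:NO_body_parametrized_by_seeds_NZ} is itself quoted from \cite{FO2} and presupposes the result in question.

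Two smaller points. First, you invoke $\widetilde{\Delta}_{\bm i}(k\lambda) = k\,\widetilde{\Delta}_{\bm i}(\lambda)$ as if it were immediate from the definition of $\widetilde{\mathcal{C}}_{\bm i}(\lambda)$; it is true, but the two cones $\widetilde{\mathcal{C}}_{\bm i}(\lambda)$ and $\widetilde{\mathcal{C}}_{\bm i}(k\lambda)$ are generated by $a\ priori$ different sets of levels, so one needs either the multiplicativity of the Kashiwara embedding on Cartan components or the polyhedrality of the global cone $\widetilde{\mathscr{C}}_{\bm i}$ (as in the proposition following \cref{p:g-vectors_for_Richardson_NZ}) to see it; a word is warranted. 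Second, once you know $S(G/B,\mathcal{L}_\lambda,v,\tau_\lambda)$ equals the positive-level lattice points of a rational polyhedral cone, your treatment of saturation and finite generation via Gordan's lemma and the conclusion via \cref{t:toric_deg} is correct and is exactly how \cite{FN,Fuj} finish.
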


\begin{ex}
Let $G = SL_3(\c)$, ${\bm i} = (1, 2, 1)$, and $\rho \coloneqq \alpha_1 + \alpha_2 \in P_{++}$. 
Then the string parametrizations $\Phi_{\bm i} (b)$, $b \in \mathcal{B}(\rho)$, are given as follows: 
\[\xymatrix{
 & {(1, 0, 0)} \ar[r]^-{2} & {(0, 1, 1)} \ar[r]^-{2} & {(0, 2, 1)} \ar[dr]^-{1}\\
{(0, 0, 0)} \ar[ur]^-{1} \ar[dr]^-{2} & &  & & {(1, 2, 1)},\\
 & {(0, 1, 0)} \ar[r]^-{1} & {(1, 1, 0)} \ar[r]^-{1} & {(2, 1, 0)} \ar[ur]^-{2} & 
}\]
where $\Phi_{\bm i} (b) \xrightarrow{i} \Phi_{\bm i} (b^\prime)$ if and only if $b^\prime = \tilde{f}_i b$.
These eight integer vectors form the set of lattice points in the string polytope $\Delta_{\bm i} (\rho)$. 
Indeed, $\Delta_{\bm i} (\rho)$ is given by 
\[\Delta_{\bm i} (\rho) = \{(a_1, a_2, a_3) \in \r^3 \mid 0 \leq a_3 \leq 1,\ a_3 \leq a_2 \leq a_3 + 1,\ 0 \leq a_1 \leq a_2 - 2a_3 + 1\};\] 
see Figure \ref{figure_ex_string_polytope}.

\begin{figure}[!ht]
\begin{center}
   \includegraphics[width=8.0cm,bb=60mm 180mm 150mm 230mm,clip]{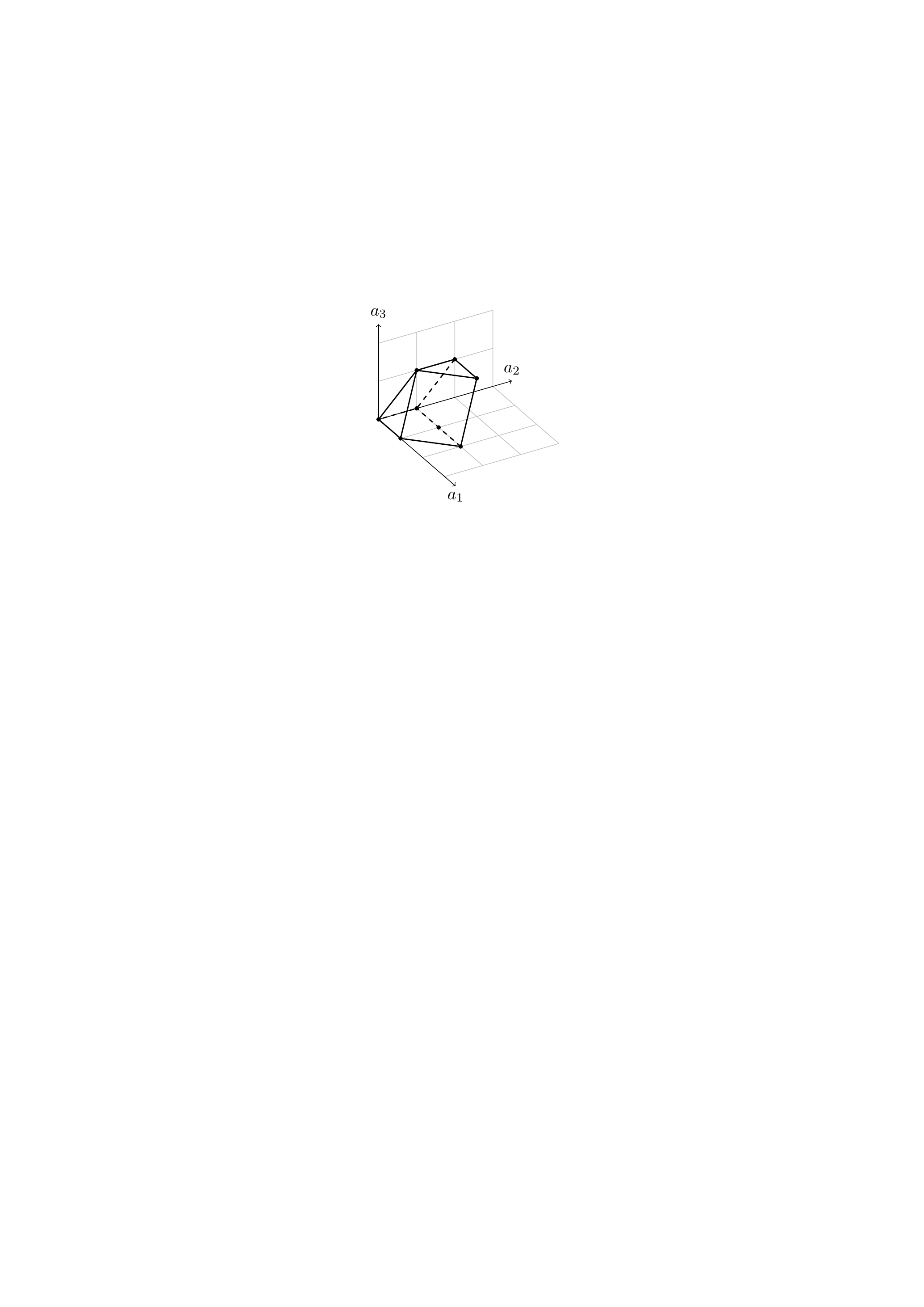}
	\caption{The string polytope $\Delta_{\bm i} (\rho)$.}
	\label{figure_ex_string_polytope}
\end{center}
\end{figure}

\noindent In addition, the values $\Psi_{\bm i} (b)$, $b \in \mathcal{B}(\rho)$, of the Kashiwara embedding $\Psi$ are given as follows: 
\[\xymatrix{
 & {(0, 0, 1)} \ar[r]^-{2} & {(0, 1, 1)} \ar[r]^-{2} & {(0, 2, 1)} \ar[dr]^-{1}\\
{(0, 0, 0)} \ar[ur]^-{1} \ar[dr]^-{2} & &  & & {(1, 2, 1)},\\
 & {(0, 1, 0)} \ar[r]^-{1} & {(1, 1, 0)} \ar[r]^-{1} & {(1, 1, 1)} \ar[ur]^-{2} & 
}\]
where $\Psi_{\bm i} (b) \xrightarrow{i} \Psi_{\bm i} (b^\prime)$ if and only if $b^\prime = \tilde{f}_i b$.
These eight integer vectors form the set of lattice points in the Nakashima--Zelevinsky polytope $\widetilde{\Delta}_{\bm i} (\rho)$. 
Indeed, $\widetilde{\Delta}_{\bm i} (\rho)$ is given by 
\[\widetilde{\Delta}_{\bm i} (\rho) = \{(a_1, a_2, a_3) \in \r^3 \mid 0 \leq a_1 \leq 1,\ 0 \leq a_3 \leq 1,\ a_1 \leq a_2 \leq a_3 + 1\};\] 
see Figure \ref{figure_ex_NZ_polytope}.

\begin{figure}[!ht]
\begin{center}
   \includegraphics[width=8.0cm,bb=60mm 180mm 150mm 230mm,clip]{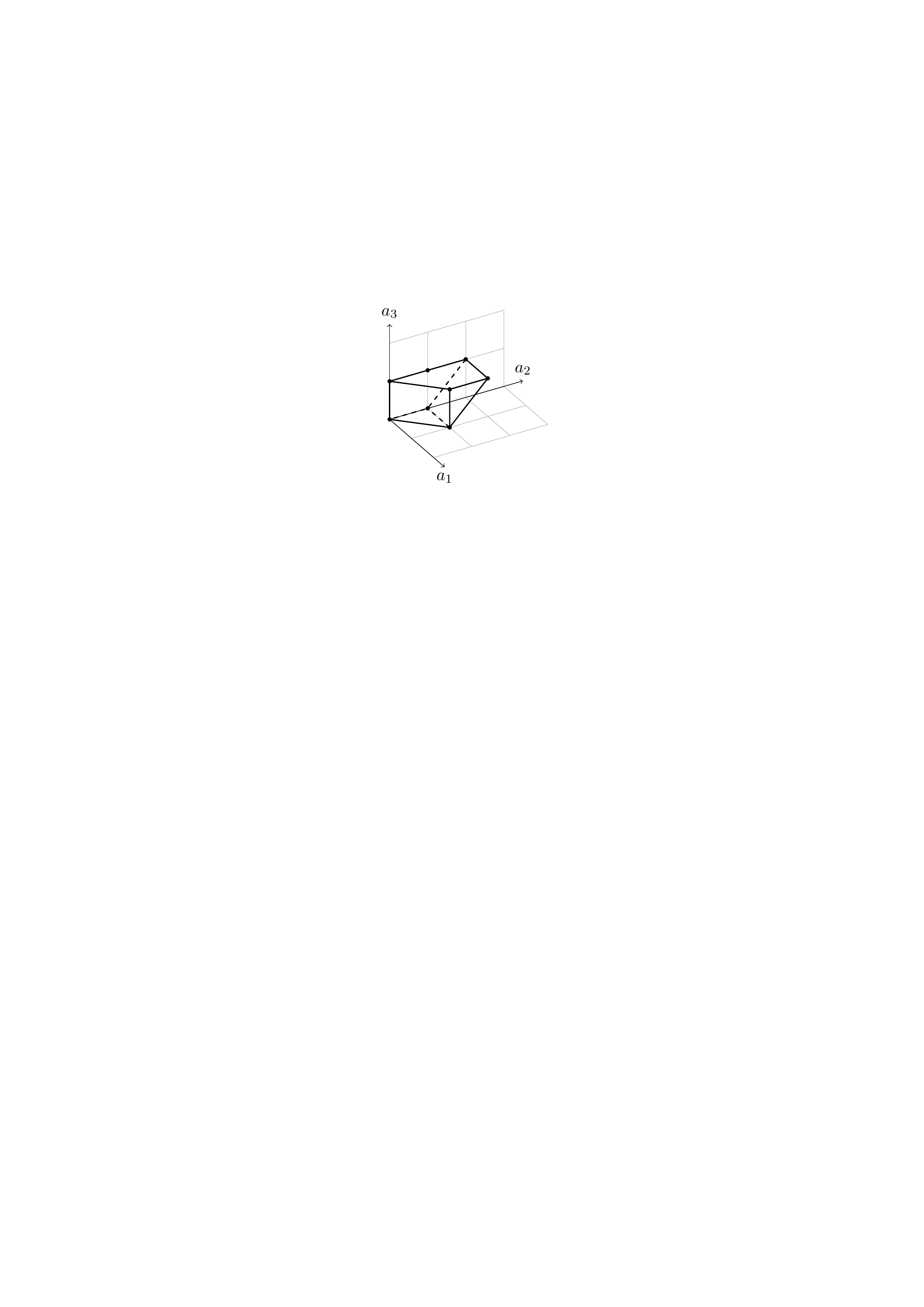}
	\caption{The Nakashima--Zelevinsky polytope $\widetilde{\Delta}_{\bm i} (\rho)$.}
	\label{figure_ex_NZ_polytope}
\end{center}
\end{figure}

\end{ex}

\subsection{Morier-Genoud's semi-toric degenerations of Richardson varieties}\label{ss:semi-toric}

In this subsection, we review some results in \cite{Mor}.
For ${\bm i} \in R(w_0)$, define a subset $\mathscr{S}_{\bm i} \subseteq P_+ \times \z^N$ by 
\[\mathscr{S}_{\bm i} \coloneqq \bigcup_{\lambda \in P_+} \{(\lambda, \Phi_{\bm i} (b)) \mid b \in \mathcal{B}(\lambda)\},\] 
and let $\mathscr{C}_{\bm i} \subseteq P_\r \times \r^N$ denote the smallest real closed cone containing $\mathscr{S}_{\bm i}$, where $P_\r \coloneqq P \otimes_\mathbb{Z} \mathbb{R}$. 

\begin{prop}[{see \cite[Section 3.2]{BZ} and \cite[Section 1]{Lit}}] 
The real closed cone $\mathscr{C}_{\bm i}$ is a rational convex polyhedral cone, and it holds that $\mathscr{C}_{\bm i} \cap (P_+ \times \z^N) = \mathscr{S}_{\bm i}$.
\end{prop}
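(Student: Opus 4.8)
The plan is to prove the two assertions---that $\mathscr{C}_{\bm i}$ is a rational convex polyhedral cone and that $\mathscr{C}_{\bm i} \cap (P_+ \times \z^N) = \mathscr{S}_{\bm i}$---by leveraging the explicit description of string cones from \cite{BZ}. First I would recall that for a fixed reduced word ${\bm i}$, Berenstein--Zelevinsky give an explicit finite list of linear inequalities (with integer coefficients, involving both the weight variable $\lambda \in P_\r$ and the string coordinates ${\bm a} \in \r^N$) cutting out a cone $\widetilde{\mathscr{C}}_{\bm i} \subseteq P_\r \times \r^N$ such that, for each $\lambda \in P_+$, the slice $\{{\bm a} \mid (\lambda, {\bm a}) \in \widetilde{\mathscr{C}}_{\bm i}\}$ is exactly the string polytope $\Delta_{\bm i}(\lambda)$. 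This $\widetilde{\mathscr{C}}_{\bm i}$ is by construction a rational convex polyhedral cone. The claim then reduces to identifying $\mathscr{C}_{\bm i}$ with (an appropriate face-truncation of) $\widetilde{\mathscr{C}}_{\bm i}$ intersected with the half-spaces defining $P_+ \times \r^N$.

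Next I would establish the two inclusions of the second assertion, which simultaneously pins down $\mathscr{C}_{\bm i}$. For ``$\supseteq$'': by \cref{string lattice points}, $\Delta_{\bm i}(\lambda) \cap \z^N = \Phi_{\bm i}(\mathcal{B}(\lambda))$ for every $\lambda \in P_+$, so $\mathscr{S}_{\bm i} = \bigcup_{\lambda \in P_+}\{\lambda\} \times (\Delta_{\bm i}(\lambda) \cap \z^N) \subseteq \widetilde{\mathscr{C}}_{\bm i} \cap (P_+ \times \z^N)$; taking the closed cone, $\mathscr{C}_{\bm i} \subseteq \widetilde{\mathscr{C}}_{\bm i}$, and clearly $\mathscr{C}_{\bm i} \subseteq P_\r \times \r^N$ lies over the cone spanned by $P_+$. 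For ``$\subseteq$'': given a lattice point $(\lambda, {\bm a}) \in \widetilde{\mathscr{C}}_{\bm i}$ with $\lambda \in P_+$, we have ${\bm a} \in \Delta_{\bm i}(\lambda) \cap \z^N = \Phi_{\bm i}(\mathcal{B}(\lambda))$, hence $(\lambda, {\bm a}) \in \mathscr{S}_{\bm i}$. This shows $\widetilde{\mathscr{C}}_{\bm i} \cap (P_+ \times \z^N) \subseteq \mathscr{S}_{\bm i}$; combined with the reverse inclusion, the two lattice-point sets agree, and since $\mathscr{S}_{\bm i}$ is a finitely generated saturated semigroup (it is the semigroup of lattice points of a rational polyhedral cone defined over a pointed rational cone in the $\lambda$-direction), its real cone $\mathscr{C}_{\bm i}$ equals $\widetilde{\mathscr{C}}_{\bm i} \cap (P_\r^{+} \times \r^N)$, a rational convex polyhedral cone, and $\mathscr{C}_{\bm i} \cap (P_+ \times \z^N) = \mathscr{S}_{\bm i}$.

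The main obstacle I anticipate is the bookkeeping needed to be sure that the cone $\mathscr{C}_{\bm i}$, defined as the closure of the $\r_{\geq 0}$-span of $\mathscr{S}_{\bm i}$, genuinely coincides with the polyhedral cone $\widetilde{\mathscr{C}}_{\bm i}$ restricted to weights in $P_\r^+ := P_+ \otimes_\z \r$, rather than being a proper subcone; this requires knowing that the string inequalities of \cite{BZ} depend linearly and homogeneously on $\lambda$ (so that scaling $\lambda$ scales the slice polytope), and that $\Delta_{\bm i}(\lambda)$ has enough lattice points---e.g. via saturation of the string semigroup $\mathcal{S}_{\bm i}(\lambda)$---to recover the full slice in the limit. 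These facts are exactly what is recorded in \cite[Section 3.2]{BZ} and \cite[Section 1]{Lit}, so the argument is a matter of citing and assembling them correctly rather than new mathematics.
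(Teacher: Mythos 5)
The proposition is stated in the paper without proof, as a recollection of results from \cite[Section 3.2]{BZ} and \cite[Section 1]{Lit}, and your argument assembles exactly those ingredients in the expected way: use the Berenstein--Zelevinsky/Littelmann inequality description to obtain a rational polyhedral cone whose $\lambda$-slices are the string polytopes, invoke \cref{string lattice points} to match lattice points, and then conclude that the closed cone generated by $\mathscr{S}_{\bm i}$ fills out the whole inequality cone over $P_\r^+$ because a rational polyhedral cone is the $\r_{\geq 0}$-span of its lattice points. One small caution: the notation $\widetilde{\mathscr{C}}_{\bm i}$ is already reserved in this paper for the Nakashima--Zelevinsky analogue defined in \cref{s:main_result}, so you should use a different symbol for the auxiliary inequality cone to avoid a clash.
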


For $\lambda \in P_+$, it holds that
\begin{align*}
\mathscr{S}_{\bm i} \cap \pi_1 ^{-1} (\z_{>0} \lambda) = \omega_\lambda(\mathcal{S}_{\bm i} (\lambda))\quad {\rm and}\quad \mathscr{C}_{\bm i} \cap \pi_1 ^{-1} (\r_{\geq 0} \lambda) = \omega_\lambda(\mathcal{C}_{\bm i} (\lambda)),
\end{align*}
where let $\pi_1 \colon P_\r \times \r^N \rightarrow P_\r$ denote the first projection, and $\omega_\lambda \colon \r_{\geq 0} \times \r^N \rightarrow P_\r \times \r^N$ is defined by $\omega_\lambda (k, {\bm a}) \coloneqq (k\lambda, {\bm a})$.
In particular, we have $\pi_2(\mathscr{C}_{\bm i} \cap \pi_1 ^{-1} (\lambda)) = \Delta_{\bm i} (\lambda)$ for the second projection $\pi_2 \colon P_\r \times \r^N \rightarrow \r^N$.
For $v, w \in W$ such that $v \leq w$, define a subset $\mathscr{S}_{\bm i}(X_w^v) \subseteq P_+ \times \mathbb{Z}^N$ by 
\[\mathscr{S}_{\bm i}(X_w^v) \coloneqq \bigcup_{\lambda \in P_+} \{(\lambda, \Phi_{\bm i} (b)) \mid b \in \mathcal{B}_w(\lambda) \cap \mathcal{B}^v(\lambda)\},\] 
and denote by $\mathscr{C}_{\bm i}(X_w^v) \subseteq P_\mathbb{R} \times \mathbb{R}^N$ the smallest real closed cone containing $\mathscr{S}_{\bm i}(X_w^v)$. 
For $\lambda \in P_+$, we write
\begin{align*}
&\Delta_{\bm i}(\lambda, X_w^v) \coloneqq \pi_2(\mathscr{C}_{\bm i}(X_w^v) \cap \pi_1 ^{-1} (\lambda)).
\end{align*}

\begin{thm}[{see the proof of \cite[Proposition 3.5 and Theorem 3.7]{Mor}}]\label{t:semi-toric_degenerations}
Let ${\bm i} \in R(w_0)$, and $v, w \in W$ such that $v \leq w$.
\begin{enumerate}
\item[{\rm (1)}] The set $\mathscr{C}_{\bm i}(X_w^v)$ is a union of faces of $\mathscr{C}_{\bm i}$, and the following equality holds:
\begin{align*}
\mathscr{C}_{\bm i}(X_w^v) \cap (P_+ \times \z^N) &= \mathscr{S}_{\bm i}(X_w^v).
\end{align*}
\item[{\rm (2)}] For $\lambda \in P_+$, the set $\Delta_{\bm i}(\lambda, X_w^v)$ is a union of faces of $\Delta_{\bm i}(\lambda)$.
\item[{\rm (3)}] For $\lambda \in P_{++}$, the Richardson variety $X_w^v$ degenerates to the union of irreducible closed toric subvarieties of $X(\Delta_{\bm i}(\lambda))$ corresponding to the faces of $\Delta_{\bm i}(\lambda, X_w^v)$.
\end{enumerate}
\end{thm}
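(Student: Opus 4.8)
The plan is to prove the three assertions in order; the main content is in (1), while (2) follows from it by slicing a cone along $\pi_1^{-1}(\lambda)$, and (3) follows from (1)--(2) together with the Rees-algebra construction behind \cref{t:toric_deg}. For (1) I would first reduce to the Schubert and opposite Schubert cases. Writing $\mathscr{S}_{\bm i}(X_w)\coloneqq\mathscr{S}_{\bm i}(X_w^e)$, $\mathscr{C}_{\bm i}(X_w)\coloneqq\mathscr{C}_{\bm i}(X_w^e)$ (recall $X_w=X_w^e$ and $\mathcal{B}^e(\lambda)=\mathcal{B}(\lambda)$), and $\mathscr{S}_{\bm i}(X^v)\coloneqq\mathscr{S}_{\bm i}(X_{w_0}^v)$, etc., injectivity of $\Phi_{\bm i}$ gives $\mathscr{S}_{\bm i}(X_w^v)=\mathscr{S}_{\bm i}(X_w)\cap\mathscr{S}_{\bm i}(X^v)$. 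Suppose we have shown that $\mathscr{C}_{\bm i}(X_w)=\bigcup_\ell F_\ell$ and $\mathscr{C}_{\bm i}(X^v)=\bigcup_m G_m$ are unions of faces of $\mathscr{C}_{\bm i}$ with $\mathscr{C}_{\bm i}(X_w)\cap(P_+\times\z^N)=\mathscr{S}_{\bm i}(X_w)$ and similarly for $X^v$. Then each $F_\ell\cap G_m$ is again a face of $\mathscr{C}_{\bm i}$, a rational polyhedral cone is the closure of the union of the rays through its lattice points, and $\mathscr{C}_{\bm i}(X_w^v)$ is by definition the smallest closed cone containing $\mathscr{S}_{\bm i}(X_w^v)=\bigcup_{\ell,m}\bigl((F_\ell\cap G_m)\cap(P_+\times\z^N)\bigr)$; a short argument then gives $\mathscr{C}_{\bm i}(X_w^v)=\bigcup_{\ell,m}(F_\ell\cap G_m)$ and $\mathscr{C}_{\bm i}(X_w^v)\cap(P_+\times\z^N)=\mathscr{S}_{\bm i}(X_w^v)$, which is (1).

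It thus remains to prove: for every $w\in W$, $\mathscr{C}_{\bm i}(X_w)$ is a union of faces of $\mathscr{C}_{\bm i}$ with $\mathscr{C}_{\bm i}(X_w)\cap(P_+\times\z^N)=\mathscr{S}_{\bm i}(X_w)$, together with the analogue for $X^v$. I would argue by induction on $\ell(w)$ using \cref{p:properties of Demazure}. For $w=e$ we have $\mathcal{B}_e(\lambda)=\{b_\lambda\}$ and $\Phi_{\bm i}(b_\lambda)=(0,\dots,0)$, so $\mathscr{S}_{\bm i}(X_e)=\{(\lambda,(0,\dots,0))\mid\lambda\in P_+\}$ is the face of $\mathscr{C}_{\bm i}$ obtained by turning all the valid inequalities $a_k\ge 0$ into equalities. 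For the inductive step choose $i$ with $s_iw<w$; by \cref{p:properties of Demazure}(2), $\mathcal{B}_w(\lambda)=\bigcup_{k\ge 0}\tilde{f}_i^{\,k}\mathcal{B}_{s_iw}(\lambda)\setminus\{0\}$, and by \cref{p:properties of Demazure}(1) both $\mathcal{B}_{s_iw}(\lambda)$ and $\mathcal{B}_w(\lambda)$ are stable under all $\tilde{e}_j$. Stability under all $\tilde{e}_j$ forces $\mathcal{B}_{s_iw}(\lambda)$ to contain, together with any $b$, the entire string $b\mapsto\tilde{e}_{i_1}^{a_1}b\mapsto\tilde{e}_{i_2}^{a_2}\tilde{e}_{i_1}^{a_1}b\mapsto\cdots\mapsto b_\lambda$ used to compute $\Phi_{\bm i}(b)$, and from this one deduces that $\Phi_{\bm i}(\mathcal{B}_{s_iw}(\lambda))$ is a union of ``coordinate'' faces $\{a_k=0\ (k\in K)\}$ of $\Delta_{\bm i}(\lambda)$. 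Passing from $\mathcal{B}_{s_iw}(\lambda)$ to $\mathcal{B}_w(\lambda)$ amounts to filling up completely every $i$-string of $\mathcal{B}(\lambda)$ that already meets $\mathcal{B}_{s_iw}(\lambda)$; one must translate this operation to $\Delta_{\bm i}(\lambda)$ and check that it again produces a union of faces, and the lattice-point identity at each stage then follows from \cref{string lattice points}. The opposite Schubert case is handled by the symmetric induction, using stability of $\mathcal{B}^v(\lambda)$ under all $\tilde{f}_j$ and the identity $\mathcal{B}^{s_iv}(\lambda)=\bigcup_{k\ge 0}\tilde{e}_i^{\,k}\mathcal{B}^v(\lambda)\setminus\{0\}$ of \cref{p:properties of Demazure}(2); alternatively one transports the Schubert case through $\widetilde{w_0}X_w=X^{w_0w}$ and the Lusztig involution.

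The hard part will be the verification in the inductive step when $i\neq i_1$: the string parametrization interacts transparently only with the first letter $i_1$ of ${\bm i}$ --- there $\tilde{e}_{i_1}$ simply decrements $a_1$, so filling $i_1$-strings adjoins, above each point of the current set having $a_1=0$, the whole segment $0\le a_1\le a_1^{\max}$, and a union of faces visibly passes to a union of faces --- whereas for a general $i$ the effect of $\tilde{f}_i$ on $\Phi_{\bm i}$ is a genuinely piecewise-linear operation. I would reduce to the case $i=i_1$ by performing braid moves on ${\bm i}$ and tracking the induced change of coordinates on the string polytope (linear for an elementary braid move between adjacent words); alternatively one invokes the explicit linear-inequality description of $\mathscr{C}_{\bm i}$ from \cite{BZ} and its behaviour under the Demazure recursion. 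This is precisely the step that \cite{Mor} circumvents by working with the Berenstein--Kazhdan geometric crystal, where the Schubert and opposite Schubert strata are cut out by the vanishing of explicit coordinate functions and hence tropicalize to unions of faces; that route is the cleanest, and I would follow it.

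Granting (1): for (2), fix $\lambda\in P_+$; the slice $\mathscr{C}_{\bm i}\cap\pi_1^{-1}(\lambda)$ is (after rescaling) $\Delta_{\bm i}(\lambda)$, and intersecting a face of $\mathscr{C}_{\bm i}$ with this slice gives a face of $\Delta_{\bm i}(\lambda)$ compatibly with lattice points, so $\Delta_{\bm i}(\lambda,X_w^v)=\pi_2(\mathscr{C}_{\bm i}(X_w^v)\cap\pi_1^{-1}(\lambda))$ is a union of faces of $\Delta_{\bm i}(\lambda)$. For (3), fix $\lambda\in P_{++}$, so $\mathcal{L}_\lambda$ is very ample and, by the realization of $\Delta_{\bm i}(\lambda)$ as a Newton--Okounkov body (see \cite{Cal,Kav}) and \cref{t:toric_deg}, the toric degeneration of $G/B$ to $X(\Delta_{\bm i}(\lambda))$ is the one obtained by the Rees construction from the valuation $v$ on $R(G/B,\mathcal{L}_\lambda)=\bigoplus_{k\ge 0}H^0(G/B,\mathcal{L}_{k\lambda})$, whose associated graded is $\c[\mathcal{S}_{\bm i}(\lambda)]$ (see \cref{d:string_polytopes}) and for which the dual canonical basis $\{G^{\rm up}_{k\lambda}(b)\}_b$ is adapted, $v$ sending it bijectively onto $\mathcal{S}_{\bm i}(\lambda)$. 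The homogeneous ideal $I\subseteq R(G/B,\mathcal{L}_\lambda)$ of $X_w^v$ is, in degree $k$, the $\c$-span of the $G^{\rm up}_{k\lambda}(b)$ with $b\in\mathcal{B}(k\lambda)\setminus(\mathcal{B}_w(k\lambda)\cap\mathcal{B}^v(k\lambda))$ (the basis statement recalled after \cref{p:properties of Demazure}). Since $I$ is spanned by part of an adapted basis, \cref{p:property_valuation} shows that the initial ideal ${\rm in}_v(I)\subseteq\c[\mathcal{S}_{\bm i}(\lambda)]$ is exactly the span of the monomials $x^{(k,\Phi_{\bm i}(b))}$ with $b\notin\mathcal{B}_w(k\lambda)\cap\mathcal{B}^v(k\lambda)$, which by (2) is the monomial ideal of the complement, inside $\mathcal{S}_{\bm i}(\lambda)$, of the union of faces $\Delta_{\bm i}(\lambda,X_w^v)$. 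Hence $R(G/B,\mathcal{L}_\lambda)/I$ degenerates flatly to $\c[\mathcal{S}_{\bm i}(\lambda)]/{\rm in}_v(I)$, which is a quotient of $\c[\mathcal{S}_{\bm i}(\lambda)]$ by a radical monomial ideal --- the intersection of the primes attached to the faces comprising $\Delta_{\bm i}(\lambda,X_w^v)$ --- and therefore is the reduced homogeneous coordinate ring of the union of the corresponding irreducible closed toric subvarieties of $X(\Delta_{\bm i}(\lambda))$; taking ${\rm Proj}$ identifies the special fibre of the induced degeneration of $X_w^v$ with that union. This is (3).
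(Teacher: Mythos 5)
The paper offers no proof of \cref{t:semi-toric_degenerations}: it is recalled from Morier-Genoud, with the argument attributed to the proofs of \cite[Proposition 3.5 and Theorem 3.7]{Mor}, so there is no internal proof to compare yours against. On its own terms, your outer layer is fine: since $\Phi_{\bm i}$ is injective on each $\mathcal{B}(\lambda)$, the identity $\mathscr{S}_{\bm i}(X_w^v)=\mathscr{S}_{\bm i}(X_w)\cap\mathscr{S}_{\bm i}(X^v)$ together with rationality of the faces does reduce (1) to the Schubert and opposite Schubert cases; (2) is indeed just the slice along $\pi_1^{-1}(\lambda)$; and your argument for (3) --- the ideal of $X_w^v$ is spanned degreewise by part of a basis adapted to the valuation, \cref{p:property_valuation} computes the initial ideal, and the face property from (1)--(2) gives radicality and identifies the special fibre --- is the standard Rees/initial-ideal argument, essentially the one used for \cite[Theorem 3.7]{Mor} and reused in the paper for \cref{t:main_result_semi-toric}(3).

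The gap sits in the only substantive step: proving that $\mathscr{C}_{\bm i}(X_w)$ and $\mathscr{C}_{\bm i}(X^v)$ are unions of faces of $\mathscr{C}_{\bm i}$ with the stated lattice points. Your induction on $\ell(w)$ is transparent only when the letter $i$ being added equals $i_1$, as you acknowledge, and the proposed repair --- reduce to that case by braid moves, ``tracking the induced change of coordinates on the string polytope (linear for an elementary braid move)'' --- rests on a false premise: only commuting moves act linearly, whereas a genuine braid move induces a piecewise-linear transition (already in type $A_2$ of the shape $(a_1,a_2,a_3)\mapsto(\max(a_3,a_2-a_1),\,a_1+a_3,\,\min(a_1,a_2-a_3))$), and a piecewise-linear reparametrization does not automatically carry a union of faces of one string cone to a union of faces of the other; that this face structure is preserved is exactly the kind of statement requiring the explicit cone descriptions of \cite{BZ} or the geometric-crystal lifting of \cite{Mor}. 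Your fallback, ``follow \cite{Mor}'s geometric crystal route,'' concedes the point: it amounts to citing the very proof to which the theorem is attributed, which is legitimate and coincides with how the paper treats the statement, but it means your proposal does not supply a self-contained argument for the one step where the content actually lies.
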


\section{Cluster algebras and higher rank valuations}\label{s:Cluster_valuation}

In this section, we review a construction of higher rank valuations using cluster structures, following \cite{FO2}. 
Let us start with recalling the definition of upper cluster algebras of geometric type, following \cite{BFZ, FZ:ClusterIV} but using the notation in \cite{FG, GHKK}. 
Fix a finite set $J$ and a subset $J_{\rm uf} \subseteq J$. 
Let $\mathcal{F} \coloneqq \mathbb{C}(z_j \mid j \in J)$ be the field of rational functions in $|J|$ variables. 
For a $J$-tuple $\mathbf{A} = (A_j)_{j \in J}$ of elements of $\mathcal{F}$ and an integer $J_{\rm uf} \times J$ matrix $\varepsilon = (\varepsilon_{i, j})_{i \in J_{\rm uf}, j \in J}$, the pair ${\mathbf s} = (\mathbf{A}, \varepsilon)$ is called a \emph{Fomin--Zelevinsky seed} (an \emph{FZ-seed} for short) of $\mathcal{F}$ if 
\begin{itemize}
	\item[(i)] $\mathbf{A}$ forms a free generating set of the field $\mathcal{F}$, and
   \item[(ii)] the $J_{\rm uf} \times J_{\rm uf}$ submatrix of $\varepsilon$ is skew-symmetrizable, that is, there exists $(d_i)_{i \in J_{\rm uf}} \in \mathbb{Z}^{J_{\rm uf}}_{>0}$ such that $d_i \varepsilon_{i, j} = -d_j \varepsilon_{j, i}$ for all $i, j \in J_{\rm uf}$.  
\end{itemize}
In this case, $\varepsilon$ is called the \emph{exchange matrix} of ${\mathbf s}$. 
Note that our exchange matrix $\varepsilon$ is transposed to the one in \cite[Section 2]{FZ:ClusterIV}.
Let ${\mathbf s} = (\mathbf{A}, \varepsilon) = ((A_j)_{j \in J}, (\varepsilon_{i, j})_{i \in J_{\rm uf}, j \in J})$ be an FZ-seed of $\mathcal{F}$. 
For $k \in J_{\rm uf}$, we define the \emph{mutation} $\mu_k ({\mathbf s}) = (\mu_k (\mathbf{A}), \mu_k (\varepsilon)) = ((A^\prime _j)_{j \in J}, (\varepsilon^\prime _{i, j})_{i \in J_{\rm uf}, j \in J})$ of ${\mathbf s}$ in direction $k$ as follows:
\begin{align*}
\varepsilon^\prime _{i, j} \coloneqq \begin{cases}
-\varepsilon_{i, j}&\ \text{if}\ i=k\ {\rm or}\ j=k,\\
\varepsilon_{i, j} + {\rm sgn}(\varepsilon_{i, k})[\varepsilon_{i, k} \varepsilon_{k, j}]_+&\ {\rm otherwise},
\end{cases}
\end{align*}
\begin{align*}
A_j ^\prime \coloneqq \begin{cases}
\displaystyle \frac{\prod_{\ell \in J} A_\ell ^{[\varepsilon_{k, \ell}]_+} + \prod_{\ell \in J} A_\ell ^{[-\varepsilon_{k, \ell}]_+}}{A_k} &\ {\rm if}\ j = k,\\
A_j &\ {\rm otherwise}
\end{cases}
\end{align*}
for $i \in J_{\rm uf}$ and $j \in J$, where $[c]_+ \coloneqq \max\{c, 0\}$ for $c \in \r$. 
Then $\mu_k ({\mathbf s})$ is also an FZ-seed of $\mathcal{F}$, and it holds that $\mu_k \circ \mu_k ({\mathbf s}) = {\mathbf s}$. 
FZ-seeds ${\mathbf s}$ and ${\mathbf s}^\prime$ are said to be \emph{mutation equivalent} if there exists a sequence $(k_1, k_2, \ldots, k_j)$ of entries in $J_{\rm uf}$ such that ${\mathbf s}^\prime = \mu_{k_j} \circ \cdots \circ \mu_{k_2} \circ \mu_{k_1} ({\mathbf s})$.

\begin{defi}[{\cite[Definition 3.1.1]{Qin}}]\label{d:dominance_order}
Let ${\mathbf s} = (\mathbf{A}, \varepsilon)$ be an FZ-seed of $\mathcal{F}$, and assume that $\varepsilon$ is of full rank. 
Then we define a partial order $\preceq_\varepsilon$ on $\mathbb{Z}^J$ as follows: for ${\bm a}, {\bm a}^\prime \in \mathbb{Z}^J$,
\[{\bm a} \preceq_{\varepsilon} {\bm a}^\prime\ \text{if and only if}\ {\bm a} = {\bm a}^\prime + {\bm v} \varepsilon\ \text{for some}\ {\bm v}\in \mathbb{Z}_{\geq 0}^{J_{\rm uf}},\]
where elements of $\mathbb{Z}^J$ (resp., $\mathbb{Z}_{\geq 0}^{J_{\rm uf}}$) are regarded as $1 \times J$ (resp., $1 \times J_{\rm uf}$) matrices. 
This partial order $\preceq_{\varepsilon}$ on $\mathbb{Z}^{J}$ is called the \emph{dominance order} associated with $\varepsilon$.
\end{defi}

Let $\mathbb{T}$ be the $|J_{\rm uf}|$-regular tree whose edges are labeled by $J_{\rm uf}$ such that the $|J_{\rm uf}|$-edges emanating from each vertex receive different labels. 
We write $t \overset{k}{\text{---}} t^\prime$ if $t, t^\prime \in \mathbb{T}$ are joined by an edge labeled by $k \in J_{\rm uf}$. 

\begin{ex}
If $J_{\rm uf} = \{1, 2\}$, then the $2$-regular tree $\mathbb{T}$ is given as follows:
\begin{align*}
\begin{xy}
\ar@{-}^-{1} (50,0) *\cir<3pt>{}="B";
(60,0) *\cir<3pt>{}="C"
\ar@{-}^-{2} "C";(70,0) *\cir<3pt>{}="D"
\ar@{-}^-{1} "D";(80,0) *\cir<3pt>{}="E"
\ar@{-}^-{2} "E";(90,0) *\cir<3pt>{}="F"
\ar@{-} "F";(95,0)
\ar@{-} "B";(45,0)
\ar@{.} (95,0);(100,0)^*!U{}
\ar@{.} (45,0);(40,0)^*!U{}
\end{xy}
\end{align*}
\end{ex}

An assignment $\mathcal{S} = \{{\mathbf s}_t\}_{t \in \mathbb{T}} = \{(\mathbf{A}_t, \varepsilon_t)\}_{t \in \mathbb{T}}$ of an FZ-seed ${\mathbf s}_t = (\mathbf{A}_t, \varepsilon_t)$ of $\mathcal{F}$ to each vertex $t \in \mathbb{T}$ is called a \emph{cluster pattern} if $\mu_k ({\mathbf s}_t) = {\mathbf s}_{t^\prime}$ whenever $t \overset{k}{\text{---}} t^\prime$ in $\mathbb{T}$. 
For a cluster pattern $\mathcal{S} = \{{\mathbf s}_t = (\mathbf{A}_t, \varepsilon_t)\}_{t \in \mathbb{T}}$, we write 
\[\mathbf{A}_t = (A_{j; t})_{j \in J}, \quad \varepsilon_t = (\varepsilon_{i, j} ^{(t)})_{i \in J_{\rm uf}, j \in J}.\]

\begin{defi}[{see \cite[Definition 1.6]{BFZ}}]\normalfont
For a cluster pattern $\mathcal{S}$, we set
\[\mathscr{U}(\mathcal{S}) \coloneqq \bigcap_{t \in \mathbb{T}} \c[A_{j; t} ^{\pm 1} \mid j \in J] \subseteq \mathcal{F},\]
which is called the \emph{upper cluster algebra of geometric type}.
\end{defi}

We usually fix a vertex $t_0 \in \mathbb{T}$ and construct a cluster pattern $\mathcal{S} = \{{\mathbf s}_t\}_{t \in \mathbb{T}}$ from one FZ-seed ${\mathbf s}_{t_0}$ of $\mathcal{F}$. 
In this case, we call ${\mathbf s}_{t_0}$ the \emph{initial FZ-seed}. 

\begin{thm}[{\cite[Theorem 3.1]{FZ:ClusterI}}]\label{t:laurentpheno}
Let $\mathcal{S}$ be a cluster pattern. 
Then it holds that 
\[\{A_{j; t} \mid t \in \mathbb{T},\ j \in J\} \subseteq \mathscr{U}(\mathcal{S});\] 
this property is called the \emph{Laurent phenomenon}.
\end{thm}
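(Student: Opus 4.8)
Since this is \cite[Theorem 3.1]{FZ:ClusterI}, the plan is to reproduce the Fomin--Zelevinsky argument; I describe its structure. First I would reduce the problem: because $\mathscr{U}(\mathcal{S}) = \bigcap_{t_0 \in \mathbb{T}} \c[A_{\ell; t_0} ^{\pm 1} \mid \ell \in J]$, it suffices to fix one vertex $t_0 \in \mathbb{T}$ and prove $A_{j; t} \in L_{t_0} \coloneqq \c[A_{\ell; t_0} ^{\pm 1} \mid \ell \in J]$ for all $t \in \mathbb{T}$ and $j \in J$. The frozen cluster variables ($j \in J \setminus J_{\rm uf}$) are constant along $\mathbb{T}$ and hence lie in $L_{t_0}$ trivially, so the content concerns the mutable directions. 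The only ring-theoretic input needed is that $L_{t_0}$, being a Laurent polynomial ring over a field, is a unique factorization domain. I would then induct on the tree distance $d = d(t_0, t)$.

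The base cases $d \leq 1$ are immediate: for $d = 0$ there is nothing to prove, and for $d = 1$, say $t \overset{k}{\text{---}} t_0$, one has $A_{j; t} = A_{j; t_0} \in L_{t_0}$ for $j \neq k$ while
\[A_{k; t} = \frac{\prod_{\ell \in J} A_{\ell; t_0} ^{[\varepsilon_{k, \ell} ^{(t_0)}]_+} + \prod_{\ell \in J} A_{\ell; t_0} ^{[-\varepsilon_{k, \ell} ^{(t_0)}]_+}}{A_{k; t_0}} \in L_{t_0}.\]
For $d \geq 2$, take the geodesic $t_0 \overset{k_1}{\text{---}} t_1 \overset{k_2}{\text{---}} \cdots \overset{k_d}{\text{---}} t_d = t$. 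The only cluster variable of $\mathbf{A}_{t_d}$ not already in $\mathbf{A}_{t_{d-1}}$ is $A_{k_d; t_d}$, which satisfies an exchange relation $A_{k_d; t_{d-1}} \cdot A_{k_d; t_d} = P$ with $P$ a binomial in the variables $A_{\ell; t_{d-1}}$ ($\ell \neq k_d$). By the induction hypothesis applied at $t_{d-1}$, all $A_{\ell; t_{d-1}}$, and hence $P$ and $A_{k_d; t_{d-1}}$, lie in $L_{t_0}$; the whole issue is therefore to show that the quotient $A_{k_d; t_d} = P / A_{k_d; t_{d-1}}$ again lies in $L_{t_0}$, i.e.\ that $A_{k_d; t_{d-1}}$ divides $P$ in the UFD $L_{t_0}$.

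To secure this clean division I would not induct on Laurentness alone, but on the strengthened statement of the \emph{Caterpillar Lemma}: along the geodesic one simultaneously propagates, for each $t_i$, that (i) every $A_{\ell; t_i}$ lies in $L_{t_0}$; (ii) the two monomials of the exchange binomial of $\mathbf{s}_{t_i}$ in direction $k_{i+1}$, once rewritten in $L_{t_0}$, are coprime; and (iii) each cluster variable that has to be cancelled (the relevant $A_{k; t_{i-1}}$'s) is, up to a unit of $L_{t_0}$, a product of pairwise distinct irreducibles, each coprime to the monomials it must divide. Given (ii)--(iii), unique factorization forces $A_{k_d; t_{d-1}} \mid P$. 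The mechanism that makes (i)--(iii) self-propagating is to embed the geodesic in a suitable finite caterpillar-shaped subtree, so that every needed instance of (ii)--(iii) is pulled back to a configuration involving at most three consecutive mutations $\mu_k, \mu_{k'}, \mu_k$; there the situation is effectively rank $2$, and the required Laurentness and coprimality follow from an explicit computation with the iterated exchange relations. Assembling these gives $A_{k_d; t_d} \in L_{t_0}$, and together with $A_{\ell; t_d} = A_{\ell; t_{d-1}}$ for $\ell \neq k_d$ this closes the induction.

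The only non-formal part, and the main obstacle, is the Caterpillar Lemma itself: one must pin down exactly which coprimality assertions to carry alongside Laurentness so that the hypothesis is at once strong enough to force each clean division and stable under mutation, and then verify its local instances through the rank-$2$ base computation. A further, routine, point to be handled in the present geometric-type setting is checking that the exchange binomials remain primitive once frozen variables are allowed to occur in the monomials.
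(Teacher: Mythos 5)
Your outline follows exactly the route of the source the paper cites for this statement (the paper itself gives no proof, quoting \cite[Theorem 3.1]{FZ:ClusterI}): reduce to Laurentness with respect to one fixed seed $t_0$ (frozen variables and the $d\leq 1$ cases being trivial), induct along the geodesic using the exchange relation, and obtain the needed divisibility in the UFD $\c[A_{\ell;t_0}^{\pm 1}\mid \ell\in J]$ by propagating coprimality data via the Caterpillar Lemma, whose verification collapses to the three-mutation, essentially rank-$2$ computation. So the approach is the same as that of the cited proof, and your reduction from the upper-cluster-algebra formulation to the classical Laurent phenomenon (intersecting over all choices of $t_0$) is correct. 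Do note that, as written, your argument is a scaffold rather than a proof: the entire substance of the theorem sits in the precise formulation of the Caterpillar Lemma hypotheses and in the explicit rank-$2$ verification, which you identify but do not carry out; within the present paper this is harmless, since the result is invoked purely as a citation.
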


The elements of $\{A_{j; t} \mid t \in \mathbb{T},\ j \in J\}$ are called \emph{cluster variables} of $\mathscr{U}(\mathcal{S})$.
For $i \in J_{\rm uf}$, set 
\[\widehat{X}_{i; t} \coloneqq \prod_{j \in J} A_{j; t} ^{\varepsilon_{i, j} ^{(t)}};\]
see \cite[Section 3]{FZ:ClusterIV}. 
In the rest of this section, we assume that 
\begin{enumerate}
\item[($\dagger$)] the exchange matrix $\varepsilon_{t_0}$ of ${\mathbf s}_{t_0}$ is of full rank for some $t_0 \in \mathbb{T}$.
\end{enumerate}
This implies that the exchange matrix $\varepsilon_t$ of ${\mathbf s}_t$ is of full rank for all $t \in \mathbb{T}$ (see \cite[Lemma 3.2]{BFZ}). 

\begin{defi}[{see \cite{FZ:ClusterIV, FO2, Qin, Tra}}]\label{d:pointed_elements}
Let $\mathcal{S} = \{{\mathbf s}_t = (\mathbf{A}_t, \varepsilon_t)\}_{t \in \mathbb{T}}$ be a cluster pattern, and fix $t \in \mathbb{T}$. 
A Laurent polynomial $f \in \c[A_{j; t} ^{\pm 1} \mid j \in J]$ is said to be \emph{weakly pointed} at $(g_j)_{j \in J} \in \z^J$ for $t$ if we have
\[f = \left(\prod_{j \in J} A_{j; t} ^{g_j}\right) \left(\sum_{{\bm a}=(a_j)_{j} \in \mathbb{Z}_{\geq 0}^{J_{\rm uf}}} c_{\bm a} \prod_{j \in J_{\rm uf}} \widehat{X}_{j; t} ^{a_j}\right)\]
for some $\{c_{\bm a} \in \c \mid {\bm a} \in \mathbb{Z}_{\geq 0}^{J_{\rm uf}}\}$ such that $c_0 \neq 0$ (see \cite[Definition 3.6]{FO2}). 
In this case, we write $g_t (f) = g_{{\mathbf s}_t} (f) \coloneqq (g_j)_{j \in J} \in \z^J$, which is called the \emph{extended $g$-vector} of $f$ (see \cite[Section 6]{FZ:ClusterIV} and \cite[Definition 3.7]{Tra}). 
If $c_0 = 1$ in addition, then $f$ is said to be \emph{pointed} at $g_t (f)$ (see \cite[Definition 3.1.4]{Qin}).
\end{defi} 

\begin{defi}[{\cite[Definition 3.8]{FO2}}]\normalfont\label{d:main_valuation}
Let $\mathcal{S} = \{{\mathbf s}_t = (\mathbf{A}_t, \varepsilon_t)\}_{t \in \mathbb{T}}$ be a cluster pattern, and fix $t \in \mathbb{T}$. 
We denote by $\preceq_{\varepsilon_t} ^{\rm op}$ the opposite order of the dominance order $\preceq_{\varepsilon_t}$, and take a total order $\leq_t$ on $\z^J$ which refines $\preceq_{\varepsilon_t} ^{\rm op}$.
The total order $\leq_t$ on $\z^J$ induces a total order (denoted by the same symbol $\leq_t$) on the set of Laurent monomials in $A_{j; t}$, $j \in J$, as follows: 
\begin{align*}
\prod_{j \in J} A_{j;t} ^{a_j} \leq_t \prod_{j \in J} A_{j;t} ^{a_j ^\prime}\quad \text{if and only if}\quad (a_j)_{j \in J} \leq_t (a_j ^\prime)_{j \in J}. 
\end{align*}
Then denote by $v_{{\mathbf s}_t}$ (or simply by $v_t$) the lowest term valuation $v^{\rm low} _{\leq_t}$ on $\mathcal{F} = \c(A_{j; t} \mid j \in J)$ with respect to $\leq_t$ (see \cref{ex:lowest_term_valuation}). 
\end{defi} 

If $f \in \c[A_{j; t} ^{\pm 1} \mid j \in J]$ is weakly pointed for $t \in \mathbb{T}$, then we see by the definition of $v_{{\mathbf s}_t}$ that  
\begin{equation}\label{eq:g-vector_valuation}
\begin{aligned}
v_{{\mathbf s}_t} (f) = g_{{\mathbf s}_t} (f)
\end{aligned}
\end{equation}
for every refinement $\leq_t$ of $\preceq_{\varepsilon_t} ^{\rm op}$.
Following \cite[Conjecture 7.12]{FZ:ClusterIV} and \cite[Section 4]{FG}, let us define a \emph{tropicalized cluster mutation} $\mu_k ^T$ at $t \in \mathbb{T}$ for $k \in J_{\rm uf}$ as follows:
\[\mu_k ^T \colon \r^J \rightarrow \r^J,\ (g_j)_{j \in J} \mapsto (g' _j)_{j \in J},\] 
is given by
\begin{equation}\label{eq:tropicalized_cluster}
\begin{aligned}
g' _j \coloneqq
\begin{cases}
g_j + [-\varepsilon_{k, j} ^{(t)}]_+ g_k + \varepsilon_{k, j} ^{(t)} [g_k]_+ &(j \neq k),\\
-g_j &(j = k)
\end{cases}
\end{aligned}
\end{equation}
for $j \in J$.
This is precisely the tropicalization of the mutation in direction $k$ at $t$ for the Fock--Goncharov dual $\mathcal{A}^\vee$ of the $\mathcal{A}$-cluster variety (see \cite[Section 2 and Definition A.4]{GHKK}).

\section{Cluster structures on unipotent cells and toric degenerations}\label{s:unipotent_cell}

\subsection{Cluster structures on unipotent cells}\label{ss:cluster_structure_unipotent_cell}

In this subsection, we review a cluster structure on the coordinate ring of a unipotent cell, which was verified in \cite{BFZ, GLS:Kac-Moody, Dem}.
For $w \in W$, define the \emph{unipotent cell} $U^-_w$ by
\begin{align*}
U^-_w \coloneqq U^- \cap B \widetilde{w} B \subseteq G,
\end{align*}
where $\widetilde{w} \in N_G(H)$ is a lift for $w \in W = N_G(H)/H$. 
The unipotent cell $U^-_w$ is independent of the choice of a lift $\widetilde{w}$ for $w$, and the open embedding $U^- \hookrightarrow G/B$ induces an open embedding $U_w^- \hookrightarrow X_w$.
Let $e_i, f_i, h_i \in \mathfrak{g}$, $i \in I$, be the Chevalley generators, and $\{\varpi_i \mid i \in I\} \subseteq P_+$ the set of fundamental weights. 
For each $i \in I$, denote by $\mathfrak{g}_i$ the Lie subalgebra of $\mathfrak{g}$ generated by $e_i, f_i, h_i$, which is isomorphic to $\mathfrak{sl}_2 (\c)$.
Then the embedding $\mathfrak{g}_i \hookrightarrow \mathfrak{g}$ of Lie algebras is naturally lifted to an algebraic group homomorphism $\psi_i \colon SL_2(\c) \rightarrow G$. 
For $i \in I$, we set
\[\overline{s}_i \coloneqq \psi_i \left(\begin{pmatrix}
0 & -1\\
1 & 0
\end{pmatrix}\right) \in N_G(H),\]
which is a lift for $s_i \in W = N_G(H)/H$.
In addition, for $w \in W$, define its lift $\overline{w} \in N_G(H)$ by $\overline{w} \coloneqq \overline{s}_{i_1} \cdots \overline{s}_{i_m}$, where $(i_1, \ldots, i_m) \in R(w)$.
This is independent of the choice of a reduced word $(i_1, \ldots, i_m) \in R(w)$. 
For $w \in W$ and $\lambda \in P_+$, write $v_{w \lambda} \coloneqq \overline{w} v_\lambda \in V(\lambda)$, and define $f_{w \lambda} \in V(\lambda)^\ast$ by $f_{w \lambda} (v_{w \lambda}) = 1$ and by $f(v) = 0$ for every weight vector $v \in V(\lambda)$ whose weight is different from $w \lambda$. 
For $u, u^\prime \in W$ and $i \in I$, we define a function $\Delta_{u \varpi_i, u^\prime \varpi_i} \in \c[G]$ by 
\[\Delta_{u \varpi_i, u^\prime \varpi_i} (g) \coloneqq f_{u \varpi_i} (g v_{u^\prime \varpi_i}).\]
This function $\Delta_{u \varpi_i, u^\prime \varpi_i}$ is called a \emph{generalized minor} (see also \cite[Section 2.3]{BFZ}). 
For $w \in W$, we denote by $D_{u \varpi_i, u^\prime \varpi_i} \in \c[U_w^-]$ the restriction of $\Delta_{u \varpi_i, u^\prime \varpi_i}$ to $U_w^-$, which is called a \emph{unipotent minor}.
Berenstein--Fomin--Zelevinsky \cite{BFZ} and Williams \cite{Wil} proved that the coordinate ring $\mathbb{C}[G^{w, w'}]$ of a double Bruhat cell $G^{w, w^\prime} \coloneqq B^- \overline{w^\prime} B^- \cap B \overline{w} B$ for $w, w^\prime \in W$ has an upper cluster algebra structure. 
If $w^\prime = e$, then the double Bruhat cell $G^{w, e}$ is closely related to the unipotent cell $U^-_w$, and their result induces an upper cluster algebra structure on $\mathbb{C}[U^-_w]$ (see, for instance, \cite[Appendix B]{FO2}).
Let us see a specific FZ-seed of $\mathbb{C}[U^-_w]$. 
Fix ${\bm i} = (i_1, \ldots, i_m) \in R(w)$. 
For $1 \leq k \leq m$, write $w_{\leq k} \coloneqq s_{i_1} \cdots s_{i_k}$ and
\begin{align*}
k^{+} \coloneqq \min(\{k+1 \leq j \leq m \mid i_j = i_k\} \cup \{m+1\}).
\end{align*} 
We set 
\[J \coloneqq \{1, \ldots, m\},\quad J_{\rm uf} \coloneqq \{j \in J \mid j^+ \neq m+1\},\]
and define an integer $J_{\rm uf} \times J$ matrix $\varepsilon^{\bm i} = (\varepsilon_{s, t})_{s \in J_{\rm uf}, t \in J}$ by
\[\varepsilon_{s, t} \coloneqq
\begin{cases}
1&\text{if}\ s^+ = t, \\
-1&\text{if}\ s = t^+, \\
c_{i_t, i_s}&\text{if}\ s < t < s^+ < t^+, \\
-c_{i_t, i_s}&\text{if}\ t < s < t^+ < s^+,\\
0&\text{otherwise},
\end{cases}\]
where we recall that $(c_{i, j})_{i, j \in I}$ denotes the Cartan matrix.

\begin{ex}
Let $G = SL_4 (\c)$, and ${\bm i} = (1, 2, 1, 3, 2, 1) \in R(w_0)$.
Then the integer matrix $\varepsilon^{\bm i}$ is given by
\[\varepsilon^{\bm i} = \begin{pmatrix}
0 & -1 & 1 & 0 & 0 & 0 \\
1 & 0 & -1 & -1 & 1 & 0 \\
-1 & 1 & 0 & 0 & -1 & 1 
\end{pmatrix}.\]
\end{ex}

For $k \in J$, we set  
\[D(k, {\bm i}) \coloneqq D_{w_{\leq k} \varpi_{i_k}, \varpi_{i_k}},\]
and consider a cluster pattern $\mathcal{S} = \{{\mathbf s}_t = (\mathbf{A}_t, \varepsilon_t)\}_{t \in \mathbb{T}}$ whose initial FZ-seed is given as $\mathbf{s}_{t_0} = ((A_{k; t_0})_{k \in J}, \varepsilon^{\bm i})$. 

\begin{thm}[{\cite[Theorem 2.10]{BFZ} and \cite[Theorem 4.16]{Wil} (see also \cite[Theorem B.4]{FO2})}]\label{t:upper_Bruhat}
For $w \in W$ and ${\bm i} = (i_1, \ldots, i_m) \in R(w)$, there exists a $\mathbb{C}$-algebra isomorphism 
\[\mathscr{U}(\mathcal{S}) \xrightarrow{\sim} \mathbb{C}[U^-_w]\] 
given by $A_{k; t_0} \mapsto D(k, {\bm i})$ for each $k \in J$.
\end{thm}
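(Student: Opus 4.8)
The plan is to reduce the statement to the upper cluster algebra structure on the coordinate ring of the double Bruhat cell $G^{w, e} = B^- \cap B\overline{w}B$ established in \cite{BFZ, Wil}, and then to strip off the torus directions. First, the multiplication map of $G$ gives an isomorphism of varieties $G^{w, e} \xrightarrow{\sim} U^-_w \times H$: every $g \in B^-$ factors uniquely as $g = u h$ with $u \in U^-$, $h \in H$, and $g \in B\overline{w}B$ if and only if $u \in U^-_w$ (using that $B\overline{w}B$ is stable under right multiplication by $B \supseteq H$). Hence $\mathbb{C}[G^{w, e}] \cong \mathbb{C}[U^-_w] \otimes_{\mathbb{C}} \mathbb{C}[H]$. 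The cluster pattern for $\mathbb{C}[G^{w, e}]$ in \cite{BFZ} has clusters of size $\ell(w) + \rank(G)$, the last $\rank(G)$ of which are frozen and, together with their inverses, generate the factor $\mathbb{C}[H]$; its remaining data is exactly the cluster pattern $\mathcal{S}$ of the statement once one checks, by a direct comparison of the two combinatorial recipes (keeping track of the transpose convention fixed in \cref{s:Cluster_valuation}), that the exchange matrix coincides with $\varepsilon^{\bm i}$, and that the generalized minor $\Delta_{w_{\leq k}\varpi_{i_k}, \varpi_{i_k}}$ restricts on $U^-_w$ to $D(k, {\bm i})$. It therefore suffices to prove $\mathbb{C}[U^-_w] = \mathscr{U}(\mathcal{S})$ with $A_{k; t_0} = D(k, {\bm i})$; this is carried out in \cite{Wil} and in \cite[Appendix B]{FO2}, and I outline the argument below.

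One first checks that the $m = \ell(w) = \dim_{\mathbb{C}} U^-_w$ unipotent minors $D(1, {\bm i}), \dots, D(m, {\bm i})$ are algebraically independent and generate $\mathbb{C}(U^-_w)$, so that $A_{k; t_0} \mapsto D(k, {\bm i})$ identifies $\mathcal{F}$ with $\mathbb{C}(U^-_w)$ (this is condition (i) of an FZ-seed); one does this via the rational parametrization of $U^-_w$ attached to ${\bm i}$ together with Chamber Ansatz-type formulas, by computing that the Jacobian of $(D(k, {\bm i}))_{k}$ in those coordinates is a nonvanishing monomial. Next one proves $\mathscr{U}(\mathcal{S}) \subseteq \mathbb{C}[U^-_w]$: every cluster variable $A_{j; t}$ restricts to a regular function on $U^-_w$, and every frozen variable $D(k, {\bm i})$ with $k \notin J_{\rm uf}$ is nowhere vanishing, hence a unit in $\mathbb{C}[U^-_w]$. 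The representation-theoretic core here is that all mutation relations among these minors are instances of generalized determinantal (Pl\"ucker-type) identities for the functions $\Delta_{u\varpi_i, u'\varpi_i}$, coming from the $\mathfrak{sl}_2$-decompositions inside $V(\varpi_i) \otimes V(\varpi_i)$ — this is the exchange-relation verification of \cite{BFZ}. Granting regularity of all cluster variables, each $\mathbb{C}[A_{j; t}^{\pm 1}]$ lies in the localization of $\mathbb{C}[U^-_w]$ at the product of the $D(k, {\bm i})$; intersecting the initial chart $t_0$ with the mutated charts $\mu_k(t_0)$ for $k \in J_{\rm uf}$ and using that $D(k, {\bm i})$ and $\mu_k(D(k, {\bm i}))$ are coprime and nonzero in that localization, one sees that a function which is Laurent in every cluster can have no pole on $U^-_w$, hence is regular.

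For the reverse inclusion $\mathbb{C}[U^-_w] \subseteq \mathscr{U}(\mathcal{S})$, one uses that $U^-_w$ is a smooth affine variety, so $\mathbb{C}[U^-_w]$ is a finitely generated normal domain and equals the intersection of its localizations at height-one primes. For a fixed seed $t$, the cluster variables $(A_{j; t})_{j \in J}$ are coordinates on an open ``cluster torus'' $T_t \hookrightarrow U^-_w$, so any $g \in \mathbb{C}[U^-_w]$ becomes a Laurent polynomial on $T_t$; the issue is to exclude poles of this Laurent expression along the divisors $\{A_{j; t} = 0\}$. Away from codimension two, the divisor of a mutable coordinate $A_{k; t}$ is swept out by the neighbouring torus $T_{\mu_k(t)}$, on which $g$ is again regular, so no such pole occurs, while the frozen coordinates are units; this forces $g \in \mathbb{C}[A_{j; t}^{\pm 1}]$ for every $t$, i.e.\ $g \in \mathscr{U}(\mathcal{S})$. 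I expect the main obstacle to be exactly this codimension-two control: it requires that each exchange hypersurface $\{A_{k; t} = 0\}$ in $U^-_w$ be prime (equivalently, that $\mu_k(A_{k; t})$ not vanish identically on it), which rests on the irreducibility of the exchange binomials and on the explicit geometry of $U^-_w$ coming from the parametrization step. Combining the two inclusions gives $\mathbb{C}[U^-_w] = \mathscr{U}(\mathcal{S})$, and transporting this isomorphism through the splitting $G^{w, e} \cong U^-_w \times H$ finishes the proof.
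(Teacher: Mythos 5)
The paper does not prove this theorem itself --- it is quoted from \cite[Theorem 2.10]{BFZ}, \cite[Theorem 4.16]{Wil}, and \cite[Theorem B.4]{FO2} --- and your argument follows precisely the route those citations indicate: split $G^{w,e} \simeq U^-_w \times H$, transport the Berenstein--Fomin--Zelevinsky/Williams upper cluster structure on $\mathbb{C}[G^{w,e}]$, and establish $\mathbb{C}[U^-_w] = \mathscr{U}(\mathcal{S})$ by the standard upper-bound argument (algebraic independence of the $D(k,{\bm i})$ via the Chamber Ansatz, coprimality of exchanged variables, and normality plus codimension-two coverage by cluster tori). So the proposal is correct and takes essentially the same approach as the paper's sources; the only point to keep explicit, as in \cite[Appendix B]{FO2}, is that under the splitting the minors $\Delta_{w_{\leq k}\varpi_{i_k},\varpi_{i_k}}$ on $G^{w,e}$ are the $D(k,{\bm i})$ twisted by characters of $H$, so the passage to the seed ${\mathbf s}_{\bm i}$ is a specialization $\Delta_{\varpi_i,\varpi_i}=1$ rather than a literal product decomposition of cluster patterns.
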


\begin{rem}
Geiss--Leclerc--Schr\"{o}er \cite{GLS:Kac-Moody} verified the cluster algebra structure on  $\mathbb{C}[U^-_w]$ without going through $G^{w, e}$ under the assumption that the Cartan matrix of $G$ is symmetric. 
Demonet \cite{Dem} extended their method to the case of symmetrizable Kac--Moody groups for specific $w$. 
Goodearl--Yakimov \cite{GY:Mem} showed a quantum cluster algebra structure on a quantum analog of $\mathbb{C}[U^-_w]$ for an arbitrary symmetrizable Kac--Moody group $G$ and $w \in W$. 
They also announced the classical version of their results in  \cite[Section 1.2]{GY:Poisson}. 
\end{rem}

We define an FZ-seed $\mathbf{s}_{\bm i}$ of $\c (U^-_w)$ by 
\[\mathbf{s}_{\bm i} \coloneqq (\mathbf{D}_{\bm i} \coloneqq (D(k, \bm{i}))_{k \in J}, \varepsilon^{\bm i}),\]
which corresponds to the FZ-seed $\mathbf{s}_{t_0} = ((A_{k; t_0})_{k \in J}, \varepsilon^{\bm i})$ under the isomorphism in \cref{t:upper_Bruhat}.

\begin{thm}[{\cite[Theorem 3.5]{FG:amal}}]
For $w \in W$, the cluster pattern associated with $\mathbf{s}_{\bm i}$ is independent of the choice of a reduced word ${\bm i}\in R(w)$, that is, all $\mathbf{s}_{\bm i}$, ${\bm i}\in R(w)$, are mutually mutation equivalent. 
\end{thm}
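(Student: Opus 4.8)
The plan is to reduce the statement to the case of a single \emph{braid move} between reduced words, using the theorem of Tits--Matsumoto: any two reduced words ${\bm i},{\bm i}'\in R(w)$ are connected by a finite chain of elementary moves, each of which either (a) interchanges two adjacent letters $i,j$ with $c_{i,j}=c_{j,i}=0$ (a \emph{commutation move}, i.e.\ the braid relation of length $2$), or (b) replaces a string $(\dots,i,j,i,j,\dots)$ of length $m_{ij}$ by $(\dots,j,i,j,i,\dots)$, where $m_{ij}\in\{3,4,6\}$ is the order of $s_is_j$. Since mutation equivalence is transitive and closed under relabelling the index set, it suffices to treat one such move at a time and then compose along a Tits chain joining ${\bm i}$ and ${\bm i}'$.

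For a commutation move swapping the letters in positions $p$ and $p+1$, I claim the two FZ-seeds coincide up to the transposition $\sigma$ of $J=\{1,\dots,m\}$ exchanging $p$ and $p+1$. One checks directly from the definition of $k^{+}$ that $\sigma$ preserves $J_{\rm uf}$ and matches the combinatorics of $(\cdot)^{+}$ for ${\bm i}$ and ${\bm i}'$; that $\sigma$ transports the entries of $\varepsilon^{\bm i}$ to those of $\varepsilon^{{\bm i}'}$ (the only entry that could change, $\varepsilon_{p,p+1}$, equals $c_{i_{p+1},i_p}=0$ in both words); and that, since $s_{i_p}$ and $s_{i_{p+1}}$ commute, the partial products $w_{\leq k}$ and the labels $i_k$ entering each unipotent minor $D(k,{\bm i})$ are identified with those of ${\bm i}'$ under $\sigma$. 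Hence $\mathbf{s}_{{\bm i}'}=\sigma(\mathbf{s}_{\bm i})$ and the two cluster patterns coincide.

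For a braid move, the key point is locality. If the move takes place on the window of positions $\{p,\dots,p+m_{ij}-1\}$, then for every index $k$ lying to the left of the window the partial product $w_{\leq k}$ is literally unchanged, hence so is the cluster variable $D(k,{\bm i})$; and, reading off the explicit formula for $\varepsilon^{\bm i}$, the arrows between indices both outside a bounded neighbourhood of the window are unchanged. One is thus reduced to the rank-two root subsystem generated by $i,j$: it remains to show that the seed attached to the reduced word $(i,j,i,\dots)$ of the longest element of the corresponding rank-two Weyl group is mutation equivalent, up to relabelling, to the one attached to $(j,i,j,\dots)$. In type $A_2$ this is a single mutation at the middle position; in types $B_2$ and $G_2$ it is a short explicit sequence of mutations. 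In each case one verifies (i) that the chosen sequence transforms $\varepsilon^{\bm i}$ — including the arrows to the indices outside the window that get perturbed by the move — into $\varepsilon^{{\bm i}'}$ up to relabelling, a finite check of the mutation rule for exchange matrices, and (ii) that the Laurent expression produced for each new cluster variable equals the corresponding unipotent minor $D(k,{\bm i}')$; assertion (ii) is precisely a generalized-minor identity (the three-term Pl\"ucker relation for $SL_3$ and its $B_2$, $G_2$ analogues), which one may import from \cite{BFZ} or \cite{FZ:ClusterIV}.

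The main obstacle is the braid-move step in the non-simply-laced cases: selecting the correct mutation sequence for $B_2$ and $G_2$, and in particular checking that the arrows joining the window to the rest of the diagram — which do change under the move but are invisible to the interior minor identities — are restored correctly by that sequence. The interior identities are classical; this boundary bookkeeping is where the argument requires care. (Once the three elementary cases are settled this way, the theorem follows by composition, matching the amalgamation argument of \cite{FG:amal}.)
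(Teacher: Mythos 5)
The paper does not prove this statement at all --- it is imported verbatim from Fock--Goncharov \cite[Theorem 3.5]{FG:amal} --- so there is no internal proof to compare against, only the cited one. Your reduction via the Tits--Matsumoto theorem to commutation moves (a mere relabelling of $J$) and rank-two braid moves (one mutation in type $A_2$, short explicit sequences in $B_2$ and $G_2$, checked against the generalized-minor identities and the arrows crossing the window) is exactly the standard argument carried out in the cited sources and is sound in outline; the one caveat is that the $B_2$/$G_2$ mutation sequences and the boundary bookkeeping, which you defer to the literature, are precisely the nontrivial content of the non-simply-laced case rather than a routine verification.
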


If $|c_{i, j}| \leq 1$ for all $i, j \in I$, then the entries of $\varepsilon^{\bm i}$ are $0$ or $\pm 1$. Hence $\varepsilon^{\bm i}$ is described as a quiver whose vertex set is $J$ and whose arrow set is given by 
\[\{s \to t \mid \varepsilon_{s, t} = -1\ \text{or}\ \varepsilon_{t, s} = 1\}. \]
Note that there are no arrows between vertices in $J \setminus J_{\rm uf}$.

\begin{ex}
Let $G = SL_4 (\c)$, and ${\bm i} = (1, 2, 1, 3, 2, 1) \in R(w_0)$.
Then the FZ-seed ${\mathbf s}_{\bm i} = ({\mathbf D}_{\bm i}, \varepsilon^{\bm i})$ is described as follows: 

	\hfill
	\scalebox{0.7}[0.7]{
		\begin{xy} 0;<1pt,0pt>:<0pt,-1pt>::
			(180,00) *+{D_{w_0 \varpi_3, \varpi_3}} ="1",
			(120,30) *+{D_{s_1 s_2 \varpi_2, \varpi_2}} ="2",
			(240,30) *+{D_{w_0 \varpi_2, \varpi_2}} ="3",
			(60,60) *+{D_{s_1 \varpi_1, \varpi_1}} ="4",
			(180,60) *+{D_{s_1 s_2 s_1 \varpi_1, \varpi_1}} ="5",
			(300,60) *+{D_{w_0 \varpi_1, \varpi_1}} ="6",
			"2", {\ar"1"},
			"4", {\ar"2"},
			"2", {\ar"5"},
			"5", {\ar"3"},
			"3", {\ar"2"},
			"5", {\ar"4"},
			"6", {\ar"5"},
		\end{xy}
	}
	\hfill
	\hfill

\end{ex}

\subsection{Bases parametrized by tropical points}\label{ss:bases_tropical}

In this subsection, we discuss a $\mathbb{C}$-basis of $\c[U_{w_0} ^-]$ which is parametrized by tropical points. 
Let $\mathcal{S} = \{{\mathbf s}_t = (\mathbf{A}_t, \varepsilon_t)\}_{t \in \mathbb{T}}$ be a cluster pattern for the upper cluster algebra $\c[U_{w_0} ^-]$.
Then we obtain a valuation $v_{{\mathbf s}_t}$ on $\c(U_{w_0} ^-) = \c(G/B)$ for each $t \in \mathbb{T}$ by \cref{d:main_valuation}. 
Recall the injective $\mathbb{C}$-linear map $\iota_\lambda \colon H^0(G/B, \mathcal{L}_{\lambda}) \hookrightarrow \c[U^-]$ given in \cref{p:relation_between_crystals_G/B} for $\lambda \in P_+$.
Since $U^- _{w_0}$ is an open subvariety of $U^-$, the coordinate ring $\c[U^-]$ is regarded as a $\c$-subalgebra of $\c[U^- _{w_0}]$. 
Hence we can think of $\iota_\lambda$ as an injective $\mathbb{C}$-linear map $H^0(G/B, \mathcal{L}_{\lambda}) \hookrightarrow \c[U^- _{w_0}]$.
For $v, w \in W$ such that $v \leq w$, let $\Lambda_{X_w^v}^{(\lambda)} \colon \iota_\lambda(H^0(G/B, \mathcal{L}_{\lambda})) \rightarrow H^0(X_w^v, \mathcal{L}_\lambda)$ denote the composition of the inverse map $\iota_\lambda^{-1} \colon \iota_\lambda(H^0(G/B, \mathcal{L}_{\lambda})) \rightarrow H^0(G/B, \mathcal{L}_{\lambda})$ and the restriction map $H^0(G/B, \mathcal{L}_{\lambda}) \rightarrow H^0(X_w^v, \mathcal{L}_{\lambda})$.
In the present paper, we use a $\mathbb{C}$-basis ${\bf B}_{w_0}$ of $\c[U_{w_0} ^-]$ which has the following properties ${\rm (T)}_1$--${\rm (T)}_5$. 
\begin{enumerate}
\item[${\rm (T)}_1$] Each element in ${\bf B}_{w_0}$ is weakly pointed for all $t \in \mathbb{T}$. 
In particular, the extended $g$-vectors $g_{{\mathbf s}_t} (b)$, $b \in {\bf B}_{w_0}$, are defined.
\item[${\rm (T)}_2$] For every $t \in \mathbb{T}$, the map ${\bf B}_{w_0} \rightarrow \z^J$ given by $b \mapsto g_{{\mathbf s}_t} (b)$ is injective.
\item[${\rm (T)}_3$] If $t \overset{k}{\text{---}} t^\prime$ in $\mathbb{T}$, then it holds that $g_{{\mathbf s}_{t^\prime}} (b) = \mu_k ^T (g_{{\mathbf s}_t} (b))$ for all $b \in {\bf B}_{w_0}$, where $\mu_k^T$ is the tropicalized cluster mutation at $t$ given in \eqref{eq:tropicalized_cluster}.
\item[${\rm (T)}_4$] For each $\lambda \in P_+$, there exists a subset ${\bf B}_{w_0} [\lambda] \subseteq {\bf B}_{w_0}$ such that 
\[\iota_\lambda(H^0(G/B, \mathcal{L}_{\lambda})) = \sum_{b \in {\bf B}_{w_0} [\lambda]} \c b.\]
\item[${\rm (T)}_5$] For each $\lambda \in P_+$ and $v, w \in W$ such that $v \leq w$, there exists a subset ${\bf B}_{w_0} [\lambda, X_w^v] \subseteq {\bf B}_{w_0} [\lambda]$ such that $\{\Lambda_{X_w^v}^{(\lambda)} (b) \mid b \in {\bf B}_{w_0} [\lambda, X_w^v]\}$ forms a $\mathbb{C}$-basis of $H^0(X_w^v, \mathcal{L}_\lambda)$ and such that $\Lambda_{X_w^v}^{(\lambda)} (b) = 0$ for all $b \in {\bf B}_{w_0} [\lambda] \setminus {\bf B}_{w_0} [\lambda, X_w^v]$.
\end{enumerate}

It follows by property ${\rm (T)}_3$ that ${\bf B}_{w_0}$ is parametrized by tropical points for the Fock--Goncharov dual $\mathcal{A}^\vee$ (see \cite[Section 2 and Definition A.4]{GHKK}). 
The existence of such basis ${\bf B}_{w_0}$ was proved by Kashiwara--Kim \cite{KasKim} and Qin \cite{Qin2} as follows.

\begin{thm}[{see \cite{KasKim, Qin2}}]\label{t:existence_of_pointed}
There exists a $\c$-basis ${\bf B}_{w_0}$ of $\c[U_{w_0} ^-]$ having properties ${\rm (T)}_1$--${\rm (T)}_5$. 
\end{thm}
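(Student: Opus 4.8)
The statement to be proved is the existence of a $\mathbb{C}$-basis ${\bf B}_{w_0}$ of $\c[U_{w_0}^-]$ satisfying the five compatibility properties ${\rm (T)}_1$--${\rm (T)}_5$. The plan is to take ${\bf B}_{w_0}$ to be (essentially) the dual canonical basis $\{G^{\rm up}(b) \mid b \in \mathcal{B}(\infty)\}$ of $\c[U^-]$, transported into $\c[U_{w_0}^-]$ via the inclusion $\c[U^-] \subseteq \c[U_{w_0}^-]$, and then invoke the deep theorems of Kashiwara--Kim \cite{KasKim} and Qin \cite{Qin2}, which identify this basis (or a closely related common-triangular basis) with a basis of $\c[U_{w_0}^-]$ whose elements are pointed with respect to every cluster of the cluster pattern $\mathcal{S}$, and whose extended $g$-vectors transform under the tropical $\mathcal{A}^\vee$-mutation. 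So the first step is to recall precisely which basis these authors produce: Qin's work gives, under the full-rank hypothesis ($\dagger$) — which holds here since $\varepsilon^{\bm i}$ for a reduced word of $w_0$ is of full rank — a \emph{common triangular basis} of $\c[U_{w_0}^-]$, and Kashiwara--Kim show the upper global basis is such a basis; this immediately yields ${\rm (T)}_1$, ${\rm (T)}_2$, and ${\rm (T)}_3$, where ${\rm (T)}_3$ is exactly the statement that the parametrizing $g$-vectors are the tropical points of $\mathcal{A}^\vee$.

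The second and more representation-theoretic step is to establish ${\rm (T)}_4$ and ${\rm (T)}_5$, which do not appear verbatim in \cite{KasKim, Qin2} but follow from the relationship between the dual canonical basis and the Borel--Weil picture recalled in \cref{p:relation_between_crystals_G/B} together with the Demazure-crystal results of \cref{p:characterization_of_Demazure_crystals} and \cref{p:properties of Demazure}. Concretely, I would set
\[
{\bf B}_{w_0}[\lambda] \coloneqq \{G^{\rm up}(b) \mid b \in \widetilde{\mathcal{B}}(\lambda)\},
\]
and use \cref{p:relation_between_crystals_G/B}(3), which says $G^{\rm up}(b) = \iota_\lambda(G^{\rm up}_\lambda(\pi_\lambda(b)))$ for $b \in \widetilde{\mathcal{B}}(\lambda)$, to conclude that $\iota_\lambda(H^0(G/B, \mathcal{L}_\lambda)) = \sum_{b \in {\bf B}_{w_0}[\lambda]} \c b$ since $\{G^{\rm up}_\lambda(b') \mid b' \in \mathcal{B}(\lambda)\}$ is a basis of $H^0(G/B, \mathcal{L}_\lambda)$ and $\pi_\lambda \colon \widetilde{\mathcal{B}}(\lambda) \to \mathcal{B}(\lambda)$ is a bijection; that gives ${\rm (T)}_4$. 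For ${\rm (T)}_5$, I would put
\[
{\bf B}_{w_0}[\lambda, X_w^v] \coloneqq \{G^{\rm up}(b) \mid b \in \widetilde{\mathcal{B}}(\lambda),\ \pi_\lambda(b) \in \mathcal{B}_w(\lambda) \cap \mathcal{B}^v(\lambda)\},
\]
and then unwind the identifications: $\Lambda_{X_w^v}^{(\lambda)}(G^{\rm up}(b)) = \pi_w^v(G^{\rm up}_\lambda(\pi_\lambda(b)))$ by the definition of $\Lambda_{X_w^v}^{(\lambda)}$ as $\pi_w^v \circ \iota_\lambda^{-1}$, and the last paragraph of Section 3.1 says exactly that $\{\pi_w^v(G^{\rm up}_\lambda(b')) \mid b' \in \mathcal{B}_w(\lambda) \cap \mathcal{B}^v(\lambda)\}$ is a $\mathbb{C}$-basis of $H^0(X_w^v, \mathcal{L}_\lambda)$ while $\pi_w^v(G^{\rm up}_\lambda(b')) = 0$ off that subset. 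So ${\rm (T)}_5$ reduces to the cited compatibility of the dual canonical basis with Richardson varieties.

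\textbf{Main obstacle.} The technically delicate point is verifying that one and the same basis simultaneously satisfies the cluster-theoretic conditions ${\rm (T)}_1$--${\rm (T)}_3$ \emph{and} the Richardson-variety condition ${\rm (T)}_5$: the cluster literature \cite{KasKim, Qin2} produces a pointed/triangular basis but does not phrase its output in terms of $\iota_\lambda$ and Demazure crystals, while the representation-theoretic literature describes the dual canonical basis's compatibility with $X_w^v$ but not its $g$-vector/tropical-mutation behaviour. The crux is therefore to prove that the Kashiwara--Kim--Qin basis \emph{is} (up to the identification of $\c[U^-]$ inside $\c[U_{w_0}^-]$) the upper global basis, so that both packages of properties apply to it. In \cite{KasKim} this identification is one of the main theorems, so I would cite it directly; the remaining work is the bookkeeping of \cref{p:relation_between_crystals_G/B}, \cref{p:characterization_of_Demazure_crystals}, and the displayed last paragraph of Section 3.1 to extract ${\rm (T)}_4$ and ${\rm (T)}_5$ in the precise form stated, which is routine but must be done carefully to match the definitions of ${\bf B}_{w_0}[\lambda]$ and ${\bf B}_{w_0}[\lambda, X_w^v]$ used throughout Section \ref{s:main_result}.
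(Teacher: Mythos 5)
Your overall strategy is the same as the paper's: take the dual canonical (upper global) basis, obtain ${\rm (T)}_1$--${\rm (T)}_3$ from Kashiwara--Kim and Qin, and extract ${\rm (T)}_4$--${\rm (T)}_5$ from \cref{p:relation_between_crystals_G/B} together with the Demazure/Richardson compatibility at the end of Section 3.1, with exactly the same choices of ${\bf B}_{w_0}[\lambda]$ and ${\bf B}_{w_0}[\lambda, X_w^v]$.

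One point, however, needs correcting. The set $\{G^{\rm up}(b) \mid b \in \mathcal{B}(\infty)\}$ ``transported via the inclusion $\c[U^-] \subseteq \c[U_{w_0}^-]$'' is \emph{not} a $\c$-basis of $\c[U_{w_0}^-]$: it spans only $\c[U^-]$, whereas $\c[U_{w_0}^-] \simeq \c[U^-][D_{w_0\varpi_i,\varpi_i}^{-1} \mid i \in I]$ contains the inverted frozen minors, which lie outside that span. The correct candidate is the localized dual canonical basis
\[{\bf B}_{w_0}^{\rm up} = \{G^{\rm up}(b) \cdot \prod_{i \in I} D_{w_0\varpi_i,\varpi_i}^{-a_i} \mid b \in \mathcal{B}(\infty),\ (a_i)_{i \in I} \in \z_{\geq 0}^I\},\]
which is a basis of $\c[U_{w_0}^-]$ by Kimura--Oya, and it is for this localized set that Kashiwara--Kim and Qin establish the pointedness and tropical-mutation properties ${\rm (T)}_1$--${\rm (T)}_3$. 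This fix costs nothing for the rest of your argument: the subsets ${\bf B}_{w_0}[\lambda]$ and ${\bf B}_{w_0}[\lambda, X_w^v]$ involve only the elements with all $a_i = 0$, so your verification of ${\rm (T)}_4$ via \cref{p:relation_between_crystals_G/B}(3) and of ${\rm (T)}_5$ via the Richardson-variety compatibility of $\{G^{\rm up}_\lambda(b)\}$ goes through verbatim, and is exactly what the paper does.
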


More precisely, the \emph{dual canonical basis} ($=$ the \emph{upper global basis}) of $\c[U_{w_0} ^-]$ has properties ${\rm (T)}_1$--${\rm (T)}_5$; we give more details in the following.
Since $U^- _{w_0}$ is given by $D_{w_0 \varpi_i, \varpi_i} \neq 0$ for $i \in I$ as an open subvariety of $U^-$, we have 
\[\c[U^-][D_{w_0 \varpi_i, \varpi_i}^{-1}\mid i\in I] \simeq \c[U^- _{w_0}]. \]
Through this isomorphism, the set 
\[{\bf B}_{w_0} ^{\rm up} \coloneqq \{G^{\rm up} (b) \cdot \prod_{i \in I} D_{w_0 \varpi_i, \varpi_i} ^{-a_i} \mid b \in \mathcal{B}(\infty),\ (a_i)_{i \in I} \in \z_{\geq 0} ^I\}\]
forms a $\c$-basis of $\c[U^- _{w_0}]$, which is called (the specialization at $q = 1$ of) the \emph{dual canonical basis}/\emph{upper global basis} of $\c[U_{w_0} ^-]$ (see \cite[Proposition 4.5 and Definition 4.6]{KimOya}).
By \cref{p:relation_between_crystals_G/B}, this basis ${\bf B}_{w_0} ^{\rm up}$ has property $({\rm T})_4$ for ${\bf B}_{w_0} [\lambda] \coloneqq \{G^{\rm up} (b) \mid b \in \widetilde{\mathcal{B}}(\lambda)\}$.
Then we see by the paragraph after \cref{p:properties of Demazure} that ${\bf B}_{w_0} ^{\rm up}$ also has property ${\rm (T)}_5$ for ${\bf B}_{w_0} [\lambda, X_w^v] \coloneqq \{G^{\rm up} (b) \mid b \in \widetilde{\mathcal{B}}_w^v(\lambda)\}$, where $\widetilde{\mathcal{B}}_w^v(\lambda)$ is defined as a subset of $\widetilde{\mathcal{B}}(\lambda)$ which corresponds to $\mathcal{B}_w(\lambda) \cap \mathcal{B}^v(\lambda) \subseteq \mathcal{B}(\lambda)$ under the bijective map $\pi_\lambda \colon \widetilde{\mathcal{B}}(\lambda) \rightarrow \mathcal{B}(\lambda)$ in \cref{p:relations_between_crystals}.
In addition, Kashiwara--Kim \cite[Lemmas 3.6, 3.12 and Theorem 3.16]{KasKim} and Qin \cite[Theorem 9.5.1]{Qin2} proved that properties $({\rm T})_1$--$({\rm T})_3$ hold for ${\bf B}_{w_0} ^{\rm up}$, which implies \cref{t:existence_of_pointed}.

\begin{rem}
In a quantum setting, the upper global basis is a common triangular basis in the sense of \cite[Definition 6.1.3]{Qin} by \cite[Proposition 3.19 and Remark 3.20]{KasKim} and by \cite[Theorem 9.5.1]{Qin2}.
\end{rem}

\begin{rem}
When the Cartan matrix of $\mathfrak{g}$ is symmetric, we see by \cite[Proposition 3.3]{KasKim} that the opposite dominance order $\preceq_{\varepsilon_t} ^{\rm op}$ in \cref{d:main_valuation} is the same as the partial order $\preceq_{\mathscr{S}}$ given in \cite[Section 3.3]{KasKim}.
In particular, by \cite[Lemma 3.6]{KasKim}, the extended $g$-vector $g_{\mathbf{s}_t}$ in \cref{d:pointed_elements} corresponds to ${\bf g}_{\mathscr{S}} ^L$ defined in \cite[Definition 3.8]{KasKim}.
\end{rem}

\subsection{Toric degenerations of flag varieties arising from cluster structures}

In this subsection, we review some results in \cite{FO2} which we use in the present paper.
For $\lambda \in P_+$ and $t \in \mathbb{T}$, write $\mathcal{S}_{\mathbf{s}_t} (\lambda) \coloneqq S(G/B, \mathcal{L}_\lambda, v_{\mathbf{s}_t}, \tau_\lambda)$, $\mathcal{C}_{\mathbf{s}_t} (\lambda) \coloneqq C(G/B, \mathcal{L}_\lambda, v_{\mathbf{s}_t}, \tau_\lambda)$, and $\Delta_{\mathbf{s}_t} (\lambda) \coloneqq \Delta(G/B, \mathcal{L}_\lambda, v_{\mathbf{s}_t}, \tau_\lambda)$.
We take a $\c$-basis ${\bf B}_{w_0}$ of $\c[U_{w_0} ^-]$ as in \cref{t:existence_of_pointed}.
By property ${\rm (T)}_1$ and \eqref{eq:g-vector_valuation}, we have $v_{\mathbf{s}_t} (b) = g_{\mathbf{s}_t} (b)$ for all $t \in \mathbb{T}$ and $b \in {\bf B}_{w_0}$. 

\begin{thm}[{see \cite[Theorem 7.1]{FO2} and its proof}]\label{t:NO_body_parametrized_by_seeds}
Let $\lambda \in P_+$, and $t \in \mathbb{T}$.
\begin{enumerate}
\item[{\rm (1)}] The Newton--Okounkov body $\Delta_{\mathbf{s}_t} (\lambda)$ does not depend on the choice of a refinement $\leq_t$ of the opposite dominance order $\preceq_{\varepsilon_t} ^{\rm op}$.
\item[{\rm (2)}] The Newton--Okounkov body $\Delta_{\mathbf{s}_t} (\lambda)$ is a rational convex polytope.
\item[{\rm (3)}] The equality $\Delta_{\mathbf{s}_t} (\lambda) \cap \z^J = \{g_{\mathbf{s}_t} (b) \mid b \in {\bf B}_{w_0} [\lambda]\}$ holds.
\item[{\rm (4)}] If $t \overset{k}{\text{---}} t^\prime$ in $\mathbb{T}$, then the equality $\Delta_{\mathbf{s}_{t^\prime}} (\lambda) = \mu_k ^T (\Delta_{\mathbf{s}_t} (\lambda))$ holds.
\item[{\rm (5)}] If $\lambda \in P_{++}$, then there exists a toric degeneration of $G/B$ to the irreducible normal projective toric variety $X(\Delta_{\mathbf{s}_t} (\lambda))$ corresponding to $\Delta_{\mathbf{s}_t} (\lambda)$. 
\end{enumerate}
\end{thm}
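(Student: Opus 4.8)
The plan is to reduce the whole statement to a combinatorial description of the semigroup $\mathcal{S}_{\mathbf{s}_t}(\lambda) = S(G/B,\mathcal{L}_\lambda,v_{\mathbf{s}_t},\tau_\lambda)$ in terms of the extended $g$-vectors of the basis ${\bf B}_{w_0}$, and then to transport the initial-seed case along tropicalized mutations. Recall first that $v_{\mathbf{s}_t} = v^{\rm low}_{\leq_t}$ is a valuation with $1$-dimensional leaves on $\mathcal{F} = \c(A_{j;t}\mid j\in J) = \c(U_{w_0}^-) = \c(G/B)$, so all the bodies in question are defined. Since $H^0(G/B,\mathcal{L}_\lambda^{\otimes k}) = H^0(G/B,\mathcal{L}_{k\lambda})$ and $\tau_\lambda^k$ (which is nonzero, as $\tau_\lambda$ does not vanish on $U^-$ by \cref{p:relation_between_crystals_G/B}) spans the one-dimensional lowest weight space of $H^0(G/B,\mathcal{L}_{k\lambda})$, we have $\tau_\lambda^k = c\,\tau_{k\lambda}$ for some $c\in\c^\times$; hence for $0\neq\sigma\in H^0(G/B,\mathcal{L}_{k\lambda})$ one gets $v_{\mathbf{s}_t}(\sigma/\tau_\lambda^k) = v_{\mathbf{s}_t}(\iota_{k\lambda}(\sigma))$ inside $\c[U_{w_0}^-]\subseteq\mathcal{F}$. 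By ${\rm (T)}_4$ the subspace $\iota_{k\lambda}(H^0(G/B,\mathcal{L}_{k\lambda}))$ is the direct sum of the lines $\c b$ with $b\in{\bf B}_{w_0}[k\lambda]$; each such $b$ is weakly pointed by ${\rm (T)}_1$ with $v_{\mathbf{s}_t}(b) = g_{\mathbf{s}_t}(b)$ by \eqref{eq:g-vector_valuation}, and these $g$-vectors are pairwise distinct by ${\rm (T)}_2$. Expanding $\iota_{k\lambda}(\sigma)$ in ${\bf B}_{w_0}$ and applying \cref{p:property_valuation}, we conclude
\[\mathcal{S}_{\mathbf{s}_t}(\lambda) = \{(k,g_{\mathbf{s}_t}(b)) \mid k\in\z_{>0},\ b\in{\bf B}_{w_0}[k\lambda]\}.\]

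Part $(1)$ is then immediate: the right-hand side depends only on the seed $\mathbf{s}_t$, because the extended $g$-vector of a weakly pointed Laurent polynomial is intrinsic to $\mathbf{s}_t$ (\cref{d:pointed_elements}) and not to the refinement $\leq_t$; hence $\mathcal{S}_{\mathbf{s}_t}(\lambda)$, the cone $\mathcal{C}_{\mathbf{s}_t}(\lambda)$ it generates, and $\Delta_{\mathbf{s}_t}(\lambda)$ are refinement-independent. For parts $(2)$ and $(3)$ I would first treat the initial seed $\mathbf{s}_{t_0} = \mathbf{s}_{\bm i}$ for a reduced word ${\bm i}\in R(w_0)$: using the explicit form of the initial cluster $\mathbf{D}_{\bm i}$ of unipotent minors together with \cref{p:relation_between_crystals_G/B}, one identifies (up to a unimodular transformation) $v_{\mathbf{s}_{t_0}}$, and hence $\Delta_{\mathbf{s}_{t_0}}(\lambda)$, with the Newton--Okounkov body underlying a Nakashima--Zelevinsky polytope; rationality and the lattice-point description for $t_0$ then follow from \cref{p:convexity}. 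For an arbitrary $t$, fix a mutation sequence from $t_0$ to $t$; by ${\rm (T)}_3$ the $g$-vectors are carried by the corresponding composite $\mu^T$ of tropicalized mutations \eqref{eq:tropicalized_cluster}, a piecewise-linear, piecewise-unimodular bijection of $\r^J$. Combining the displayed description of $\mathcal{S}_{\mathbf{s}_t}(\lambda)$ with the base case, and using that $\mu^T$ carries lattice points bijectively to lattice points and is integral-linear on each cone of the relevant complete rational fan, one transports rationality and the equality $\Delta_{\mathbf{s}_t}(\lambda)\cap\z^J = \{g_{\mathbf{s}_t}(b)\mid b\in{\bf B}_{w_0}[\lambda]\}$ to every $t$, giving $(2)$ and $(3)$.

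Part $(4)$ is the delicate step. Granting $(2)$ and $(3)$ for $t$ and $t^\prime$ with $t\overset{k}{\text{---}}t^\prime$, property ${\rm (T)}_3$ gives $\Delta_{\mathbf{s}_{t^\prime}}(\lambda)\cap\z^J = \mu_k^T(\Delta_{\mathbf{s}_t}(\lambda)\cap\z^J)$, and both are rational convex polytopes; one must upgrade this to the set equality $\Delta_{\mathbf{s}_{t^\prime}}(\lambda) = \mu_k^T(\Delta_{\mathbf{s}_t}(\lambda))$. Since $\mu_k^T$ is only piecewise-linear across the hyperplane $\{g_k = 0\}$, the image of a convex polytope need not a priori be convex, so the argument must track how the lowest-term valuation transforms under the birational substitution relating $\mathbf{A}_t$ and $\mathbf{A}_{t^\prime}$, and show that on $\mathcal{C}_{\mathbf{s}_t}(\lambda)$ this hyperplane meets the cone in a way that keeps $\mu_k^T(\Delta_{\mathbf{s}_t}(\lambda))$ convex and equal to $\Delta_{\mathbf{s}_{t^\prime}}(\lambda)$. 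I expect this compatibility of the lowest-term valuation with mutation to be the principal obstacle.

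Finally, for $(5)$: from $\tau_\lambda^m = c\,\tau_{m\lambda}$ and the displayed formula, the degree-$m$ slice of $\mathcal{S}_{\mathbf{s}_t}(\lambda)$ equals (up to the first coordinate) the degree-$1$ slice of $\mathcal{S}_{\mathbf{s}_t}(m\lambda)$, so a rescaling in the first coordinate yields $\Delta_{\mathbf{s}_t}(m\lambda) = m\,\Delta_{\mathbf{s}_t}(\lambda)$ for all $m\geq 1$. Since $\Delta_{\mathbf{s}_t}(\lambda)$ is compact, $\mathcal{C}_{\mathbf{s}_t}(\lambda) = \r_{\geq 0}\cdot(\{1\}\times\Delta_{\mathbf{s}_t}(\lambda))$; combining this with $(3)$ applied to each $m\lambda$ shows that $\mathcal{S}_{\mathbf{s}_t}(\lambda)$ is precisely the set of lattice points of the rational polyhedral cone $\mathcal{C}_{\mathbf{s}_t}(\lambda)$, hence finitely generated (Gordan's lemma) and saturated. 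When $\lambda\in P_{++}$ the line bundle $\mathcal{L}_\lambda$ is very ample, so \cref{t:toric_deg} yields the toric degeneration of $G/B$ to $X(\Delta_{\mathbf{s}_t}(\lambda))$.
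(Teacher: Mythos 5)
Your reduction of the semigroup to extended $g$-vectors is sound and is in the spirit of the cited source (the paper itself gives no proof, quoting \cite[Theorem 7.1]{FO2}): using ${\rm (T)}_1$, ${\rm (T)}_2$, ${\rm (T)}_4$, \eqref{eq:g-vector_valuation}, and \cref{p:property_valuation} one indeed gets $\mathcal{S}_{\mathbf{s}_t}(\lambda)=\{(k,g_{\mathbf{s}_t}(b))\mid k\in\z_{>0},\ b\in{\bf B}_{w_0}[k\lambda]\}$, which settles (1), and your deduction of (5) from (2)--(3) via Gordan's lemma and \cref{t:toric_deg} is fine. The genuine gap is at the heart of the theorem, namely (4) and, through it, (2)--(3) at seeds other than the base one. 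You derive (2)--(3) at a general $t$ by ``transporting'' them from $t_0$ along the composite of tropicalized mutations, but knowing that $\mu^T$ is an integral piecewise-linear bijection carrying $g$-vectors to $g$-vectors does not determine $\Delta_{\mathbf{s}_t}(\lambda)$ unless you already know $\Delta_{\mathbf{s}_t}(\lambda)=\mu^T(\Delta_{\mathbf{s}_{t_0}}(\lambda))$ --- which is exactly statement (4). Your sketch of (4), in turn, assumes (2)--(3) at both seeds and then explicitly leaves the convexity/compatibility issue as ``the principal obstacle.'' So the argument is circular at its key step, and the central claim of the theorem is not established.

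The obstacle you flag is in fact avoidable, and this is the idea your proposal is missing. Because $\mathcal{S}_{\mathbf{s}_t}(\lambda)$ is a semigroup, every rational convex combination of points $g_{\mathbf{s}_t}(b)/k$ with $b\in{\bf B}_{w_0}[k\lambda]$ is again of this form (clear denominators and add inside the semigroup), so $\Delta_{\mathbf{s}_t}(\lambda)$ is the \emph{closure of the union} $\bigcup_{k}\frac{1}{k}\{g_{\mathbf{s}_t}(b)\mid b\in{\bf B}_{w_0}[k\lambda]\}$; no convex hull ever has to be pushed through $\mu_k^T$. Since $\mu_k^T$ in \eqref{eq:tropicalized_cluster} is a positively homogeneous homeomorphism of $\r^J$ preserving $\z^J$, and by ${\rm (T)}_3$ it maps the level-$k$ set at $t$ onto the level-$k$ set at $t^\prime$, equality (4) follows at once, with convexity of the image automatic because the target is itself a Newton--Okounkov body; then (2) transports (the image of a rational polytope under an integral piecewise-linear map is a finite union of rational polytopes, and a convex such union is a rational polytope) and (3) transports because $\mu_k^T$ restricts to a bijection of $\z^J$. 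Finally, a smaller but real slip in your base case: by \cref{t:NO_body_parametrized_by_seeds_string} the initial seed $\mathbf{s}_{\bm i}$ attached to a reduced word matches the \emph{string} polytope, whereas the Nakashima--Zelevinsky polytope corresponds to the mutated seed $\mathbf{s}_{\bm i}^{\rm mut}$ of \cref{t:NO_body_parametrized_by_seeds_NZ}; either identification can serve as the base case, but the one you state is not the one available for $\mathbf{s}_{t_0}=\mathbf{s}_{\bm i}$.
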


\begin{thm}[{see \cite[Theorem 6.5 and Corollary 6.6]{FO2} and \cite[Proposition 3.28, Theorem 5.1, and Corollary 5.3]{FO}}]\label{t:NO_body_parametrized_by_seeds_string}
Let ${\bm i} = (i_1, i_2, \ldots, i_N) \in R(w_0)$.
\begin{enumerate}
\item[{\rm (1)}] Define a unimodular transformation $\Upsilon_{\bm i} \colon \r^J \rightarrow \r^J$ by $\Upsilon_{\bm i} ({\bm a}) \coloneqq {\bm a} \cdot M_{\bm i}$, where $M_{\bm i} = (d_{k, \ell})_{k, \ell \in J}$ is a unimodular $J \times J$ matrix given by 
\[d_{k, \ell} \coloneqq \begin{cases}
\langle s_{i_{\ell+1}} \cdots s_{i_k} \varpi_{i_k}, h_{i_\ell} \rangle &{\text if}\ \ell \leq k,\\
0 &{\text if}\ \ell > k.
\end{cases}\]
Then it holds for $\lambda \in P_+$ that
\[\Delta_{\bm i} (\lambda) = \Upsilon_{\bm i}(\Delta_{\mathbf{s}_{\bm i}} (\lambda)).\]
\item[{\rm (2)}] For all $\lambda \in P_+$, the equality $\Phi_{\bm i} (\mathcal{B}(\lambda)) = \{\Upsilon_{\bm i} (g_{\mathbf{s}_{\bm i}} (b)) \mid b \in {\bf B}_{w_0} [\lambda]\}$ holds.
\end{enumerate}
\end{thm}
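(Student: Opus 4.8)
The plan is to prove part (1) first, by identifying the cluster valuation $v_{\mathbf{s}_{\bm i}}$ with the \emph{string valuation} attached to ${\bm i}$ up to the linear map $\Upsilon_{\bm i}$, and then to deduce part (2) by comparing lattice points.

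First I would set up the two valuations on $\c(G/B) = \c(U_{w_0}^-)$. By \cref{d:main_valuation}, $v_{\mathbf{s}_{\bm i}}$ is the lowest term valuation (\cref{ex:lowest_term_valuation}) in the cluster variables $(D(k, {\bm i}))_{k \in J}$ for some refinement of the opposite dominance order $\preceq_{\varepsilon^{\bm i}}^{\mathrm{op}}$, and $\Delta_{\mathbf{s}_{\bm i}}(\lambda) = \Delta(G/B, \mathcal{L}_\lambda, v_{\mathbf{s}_{\bm i}}, \tau_\lambda)$; by \cref{t:NO_body_parametrized_by_seeds}(1) the choice of refinement is immaterial. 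On the other hand, Caldero's toric degeneration as geometrized by Kaveh (\cite{Cal, Kav}) realizes the string polytope as a Newton--Okounkov body $\Delta_{\bm i}(\lambda) = \Delta(G/B, \mathcal{L}_\lambda, v_{\bm i}^{\mathrm{str}}, \tau_\lambda)$, where $v_{\bm i}^{\mathrm{str}}$ is the valuation built from ${\bm i}$ whose value on $\sigma / \tau_\lambda$ records the string parametrization $\Phi_{\bm i}$ of the corresponding dual canonical basis vector; concretely $v_{\bm i}^{\mathrm{str}}$ is again a lowest term valuation, but in the coordinates $(t_1, \dots, t_N)$ coming from the parametrization of $U_{w_0}^-$ by one-parameter subgroups read off from ${\bm i}$. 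Since a linear substitution $v' = \Upsilon \circ v$ carries the semigroup $S(Z, \mathcal{L}, v, \tau)$ onto its image under $\mathrm{id} \times \Upsilon$, hence the cone $C(Z, \mathcal{L}, v, \tau)$ onto its image and the height-one slice onto its image, part (1) reduces to the identity
\[v_{\bm i}^{\mathrm{str}}(f) = \Upsilon_{\bm i}\big(v_{\mathbf{s}_{\bm i}}(f)\big) \qquad \text{for all } f \in \c(U_{w_0}^-) \setminus \{0\}.\]

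The crux is the passage between the two coordinate systems $(D(k, {\bm i}))_{k}$ and $(t_\ell)_\ell$ of $\c(U_{w_0}^-)$, which I claim is a unitriangular monomial substitution with exponent matrix $M_{\bm i}$. Evaluating the generalized minor on the parametrization should give
\[D(k, {\bm i}) = D_{w_{\leq k} \varpi_{i_k}, \varpi_{i_k}} = c_k \prod_{\ell \leq k} t_\ell^{\,d_{k, \ell}} + (\text{higher terms}), \qquad c_k \in \c^\times,\]
with leading exponents exactly the weight pairings $d_{k, \ell} = \langle s_{i_{\ell+1}} \cdots s_{i_k} \varpi_{i_k}, h_{i_\ell}\rangle$; this is a direct expansion of the product of one-parameter subgroups acting on the highest weight vector of $V(\varpi_{i_k})$ and extracting the minimal monomial (the Berenstein--Zelevinsky Chamber Ansatz, \cite{BZ}). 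Granting this and using the order-independence in \cref{t:NO_body_parametrized_by_seeds}(1) to take the refining order defining $v_{\mathbf{s}_{\bm i}}$ to be the pullback along $M_{\bm i}$ of the order defining $v_{\bm i}^{\mathrm{str}}$ (one must check that this pullback does refine $\preceq_{\varepsilon^{\bm i}}^{\mathrm{op}}$), the lowest term transforms without cancellation, so $v_{\bm i}^{\mathrm{str}}(f) = v_{\mathbf{s}_{\bm i}}(f) \cdot M_{\bm i} = \Upsilon_{\bm i}(v_{\mathbf{s}_{\bm i}}(f))$, which proves (1). Part (2) is then formal: $\Upsilon_{\bm i}$ is unimodular, so it maps $\Delta_{\mathbf{s}_{\bm i}}(\lambda) \cap \z^J$ bijectively onto $\Delta_{\bm i}(\lambda) \cap \z^N$; by \cref{t:NO_body_parametrized_by_seeds}(3) and \eqref{eq:g-vector_valuation} the former equals $\Upsilon_{\bm i}(\{g_{\mathbf{s}_{\bm i}}(b) \mid b \in {\bf B}_{w_0} [\lambda]\})$, and by \cref{string lattice points} the latter equals $\Phi_{\bm i}(\mathcal{B}(\lambda))$, which is the asserted equality.

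The main obstacle is the third step: showing that the cluster variables $D(k, {\bm i})$ and the reduced-word parametrization of $U_{w_0}^-$ are related by a unitriangular monomial substitution with precisely the matrix $M_{\bm i}$, and that the total order defining $v_{\mathbf{s}_{\bm i}}$ can be matched, via $M_{\bm i}$, with the order underlying the string valuation. This is where the exact shape of $d_{k, \ell}$ is forced, and it amounts essentially to the combined content of \cite{FO2} on the cluster side and \cite{FO} on the string-valuation side.
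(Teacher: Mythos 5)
This statement is recalled in the paper as a citation to \cite[Theorem 6.5 and Corollary 6.6]{FO2} and \cite[Proposition 3.28, Theorem 5.1, Corollary 5.3]{FO}; the paper gives no proof of its own, so there is no in-paper argument to compare against. Your sketch does reconstruct the strategy of those references: realize $\Delta_{\bm i}(\lambda)$ and $\Delta_{\mathbf{s}_{\bm i}}(\lambda)$ as Newton--Okounkov bodies for two lowest-term valuations on $\c(U_{w_0}^-)$ (the cluster valuation $v_{\mathbf{s}_{\bm i}}$ in the $D(k,{\bm i})$ and the valuation $\tilde{v}_{\bm i}^{\mathrm{low}}$ in the one-parameter coordinates, which the paper itself uses in the proof of \cref{p:g-vectors_for_Richardson}), reduce part (1) to the valuation identity $\tilde{v}_{\bm i}^{\mathrm{low}} = \Upsilon_{\bm i}\circ v_{\mathbf{s}_{\bm i}}$, and derive that from a unitriangular monomial expansion of each $D(k,{\bm i})$ in the $t_\ell$-coordinates together with a compatible choice of total order. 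You correctly identify the monomial expansion and the order-matching (including the check that the pulled-back order refines $\preceq_{\varepsilon^{\bm i}}^{\mathrm{op}}$) as the crux and leave them as deferred computations; that deferred content is precisely \cite[Theorem 6.5]{FO2} plus \cite[Proposition 3.28]{FO}, so nothing in your outline would fail, but a self-contained proof would still require carrying out that expansion. Your deduction of part (2) from part (1), unimodularity of $\Upsilon_{\bm i}$, \cref{t:NO_body_parametrized_by_seeds}(3), and \cref{string lattice points} is correct and is the intended argument.
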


\begin{thm}[{see \cite[Theorem 6.24 and Corollary 6.25]{FO2} and \cite[Proposition 3.29, Theorem 5.1, and Corollary 5.3]{FO}}]\label{t:NO_body_parametrized_by_seeds_NZ}
Let ${\bm i} = (i_1, i_2, \ldots, i_N) \in R(w_0)$.
\begin{enumerate}
\item[{\rm (1)}] There exist an FZ-seed ${\mathbf s}_{\bm i} ^{\rm mut} \in \mathcal{S}$ and a unimodular $J \times J$ matrix $N_{\bm i}$ such that the equality
\[\widetilde{\Delta}_{\bm i} (\lambda) = \widetilde{\Upsilon}_{\bm i}(\Delta_{\mathbf{s}_{\bm i} ^{\rm mut}} (\lambda))\]
holds for all $\lambda \in P_+$, where $\widetilde{\Upsilon}_{\bm i} \colon \r^J \rightarrow \r^J$ is a unimodular transformation given by $\widetilde{\Upsilon}_{\bm i} ({\bm a}) \coloneqq {\bm a} \cdot N_{\bm i}$.
\item[{\rm (2)}] For all $\lambda \in P_+$, the equality $\Psi_{\bm i} (\mathcal{B}(\lambda)) = \{\widetilde{\Upsilon}_{\bm i} (g_{\mathbf{s}_{\bm i} ^{\rm mut}} (b)) \mid b \in {\bf B}_{w_0} [\lambda]\}$ holds.
\end{enumerate}
\end{thm}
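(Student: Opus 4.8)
The plan is to reduce the statement to the already-established string-polytope comparison \cref{t:NO_body_parametrized_by_seeds_string}, using the general $g$-vector description of the Newton--Okounkov polytopes in \cref{t:NO_body_parametrized_by_seeds} together with the classical dictionary from crystal base theory \cite{NZ, Nak} between Kashiwara (Nakashima--Zelevinsky) parametrizations and string parametrizations. As a first step, I would reduce part (1) to part (2). By the proof of \cref{t:NO_body_parametrized_by_seeds} (combining property $({\rm T})_2$, \cref{p:property_valuation}, and the identity $v_{\mathbf{s}_t}(b) = g_{\mathbf{s}_t}(b)$ from \eqref{eq:g-vector_valuation}), for every FZ-seed $\mathbf{s}_t \in \mathcal{S}$ and every $\mu \in P_+$ one has $\mathcal{S}_{\mathbf{s}_t}(\lambda) = \bigcup_{k \in \z_{>0}} \{(k, g_{\mathbf{s}_t}(b)) \mid b \in {\bf B}_{w_0}[k\lambda]\}$ and $\Delta_{\mathbf{s}_t}(\mu) \cap \z^J = \{g_{\mathbf{s}_t}(b) \mid b \in {\bf B}_{w_0}[\mu]\}$, while on the crystal side $\widetilde{\mathcal{S}}_{\bm i}(\lambda) = \bigcup_{k \in \z_{>0}} \{(k, \Psi_{\bm i}(b)) \mid b \in \mathcal{B}(k\lambda)\}$ and $\widetilde{\Delta}_{\bm i}(\mu) \cap \z^J = \Psi_{\bm i}(\mathcal{B}(\mu))$ by \cref{p:convexity}. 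Hence, once (2) is known for all the weights $k\lambda$, $k \in \z_{>0}$, the lattice-preserving linear automorphism ${\rm id}_\r \times \widetilde{\Upsilon}_{\bm i}$ of $\r \times \r^J$ carries $\mathcal{S}_{\mathbf{s}_{\bm i}^{\rm mut}}(\lambda)$ onto $\widetilde{\mathcal{S}}_{\bm i}(\lambda)$, hence carries the closed cone $\mathcal{C}_{\mathbf{s}_{\bm i}^{\rm mut}}(\lambda)$ onto $\widetilde{\mathcal{C}}_{\bm i}(\lambda)$; slicing at level $1$ gives $\widetilde{\Delta}_{\bm i}(\lambda) = \widetilde{\Upsilon}_{\bm i}(\Delta_{\mathbf{s}_{\bm i}^{\rm mut}}(\lambda))$. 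So it remains to prove (2).

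For part (2) I would argue in three steps. First, recall the crystal-theoretic fact (see \cite[Section 2.4]{NZ} and \cite{Nak}) that the Kashiwara embedding $\Psi_{\bm i}$, read off in a suitable coordinate order, is a string-type parametrization of another reduced word ${\bm i}^\circ \in R(w_0)$ precomposed with Kashiwara's $*$-involution of $\mathcal{B}(\infty)$; this produces a unimodular matrix $Q_{\bm i}$ with $\Psi_{\bm i}(\mathcal{B}(\mu)) = Q_{\bm i}\bigl(\Phi_{{\bm i}^\circ}(\mathcal{B}(\mu))^{*}\bigr)$, where the superscript $*$ records the $*$-twist (and a possible weight twist $\mu \mapsto -w_0\mu$ is absorbed in the choice of ${\bm i}^\circ$). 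Second, transport this through the dual canonical basis ${\bf B}_{w_0}^{\rm up}$: the $*$-involution of $\mathcal{B}(\infty)$ is realized on $\c[U_{w_0}^-]$ by an algebra anti-automorphism that sends unipotent minors to unipotent minors and fixes ${\bf B}_{w_0}^{\rm up}$ setwise, hence maps the cluster pattern $\mathcal{S}$ to itself; since by \cite[Theorem 3.5]{FG:amal} all the seeds $\mathbf{s}_{{\bm i}'}$, ${\bm i}' \in R(w_0)$, lie in $\mathcal{S}$, the image of $\mathbf{s}_{{\bm i}^\circ}$ under this anti-automorphism is a well-defined FZ-seed $\mathbf{s}_{\bm i}^{\rm mut} \in \mathcal{S}$, and the anti-automorphism intertwines the corresponding $g$-vector parametrizations up to a fixed unimodular change of coordinates. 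Third, feed in \cref{t:NO_body_parametrized_by_seeds_string}(2) for ${\bm i}^\circ$, namely $\Phi_{{\bm i}^\circ}(\mathcal{B}(\mu)) = \{\Upsilon_{{\bm i}^\circ}(g_{\mathbf{s}_{{\bm i}^\circ}}(b)) \mid b \in {\bf B}_{w_0}[\mu]\}$, together with the fact that ${\bf B}_{w_0}^{\rm up}$ is weakly pointed with extended $g$-vectors computed by $v_{\mathbf{s}_{\bm i}^{\rm mut}}$ (property $({\rm T})_1$ and \eqref{eq:g-vector_valuation}, available by \cref{t:existence_of_pointed}). Composing the unimodular maps $Q_{\bm i}$, the $*$-transport, and $\Upsilon_{{\bm i}^\circ}$ yields a unimodular matrix $N_{\bm i}$ with $\Psi_{\bm i}(\mathcal{B}(\mu)) = \{\widetilde{\Upsilon}_{\bm i}(g_{\mathbf{s}_{\bm i}^{\rm mut}}(b)) \mid b \in {\bf B}_{w_0}[\mu]\}$; taking $\mu = \lambda$ gives (2).

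The main obstacle is the bookkeeping in the second step: pinning down ${\bm i}^\circ$ and the explicit mutation sequence producing $\mathbf{s}_{\bm i}^{\rm mut}$ from the initial seed $\mathbf{s}_{\bm i}$, verifying that the $*$-anti-automorphism acts on the $g$-vector lattices as an honest linear map exactly matching the combinatorial $*$-to-string formula of \cite{NZ} (keeping careful track of the weight twist), and checking that ${\bf B}_{w_0}^{\rm up}$ stays weakly pointed with the expected extended $g$-vectors for the seed $\mathbf{s}_{\bm i}^{\rm mut}$ rather than just for $\mathbf{s}_{\bm i}$. All of this is carried out in \cite[Proposition 3.29, Theorem 5.1, and Corollary 5.3]{FO} and \cite[Theorem 6.24 and Corollary 6.25]{FO2}; the crystal-theoretic input of \cite{NZ, Nak} is precisely what reconnects the cluster-side picture to the Nakashima--Zelevinsky polytope $\widetilde{\Delta}_{\bm i}(\lambda)$.
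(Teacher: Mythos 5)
The paper itself offers no proof of \cref{t:NO_body_parametrized_by_seeds_NZ}: it is stated as an imported result, quoted from \cite[Theorem 6.24 and Corollary 6.25]{FO2} and \cite[Proposition 3.29, Theorem 5.1, and Corollary 5.3]{FO}, so the only ``approach'' in the paper is the citation. Measured against that, the portion of your argument that is genuinely self-contained, the reduction of (1) to (2), is correct and is exactly the semigroup mechanism the paper uses elsewhere: by properties $({\rm T})_2$, $({\rm T})_4$, the identity \eqref{eq:g-vector_valuation}, \cref{p:property_valuation}, and $\tau_{k\lambda} = c\,\tau_\lambda^{k}$, the level-$k$ slice of $\mathcal{S}_{\mathbf{s}_{\bm i}^{\rm mut}}(\lambda)$ is $\{g_{\mathbf{s}_{\bm i}^{\rm mut}}(b) \mid b \in {\bf B}_{w_0}[k\lambda]\}$, while the level-$k$ slice of $\widetilde{\mathcal{S}}_{\bm i}(\lambda)$ is $\Psi_{\bm i}(\mathcal{B}(k\lambda))$ by definition and \cref{p:convexity}; so once (2) is known for all multiples $k\lambda$, the linear map ${\rm id}\times\widetilde{\Upsilon}_{\bm i}$ identifies the generating semigroups, hence the closed cones and their level-one slices. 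This mirrors the arguments behind \cref{t:NO_body_parametrized_by_seeds} and \cref{t:main_result_semi-toric}.

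For (2), however, your sketch has a gap precisely where the content of the theorem lies. The dictionary you invoke in the first step, $\Psi_{\bm i}(\mathcal{B}(\mu)) = Q_{\bm i}\bigl(\Phi_{{\bm i}^\circ}(\mathcal{B}(\mu))^{*}\bigr)$ with the twist ``absorbed in the choice of ${\bm i}^\circ$,'' is not justified: while the $*$-involution does relate Kashiwara-embedding data and string data at the level of $\mathcal{B}(\infty)$, it does not preserve the lifts $\widetilde{\mathcal{B}}(\lambda) \subseteq \mathcal{B}(\infty)$ (it interchanges the roles of $\varepsilon_i$ and $\varepsilon_i^*$), and no choice of reduced word can absorb the resulting $\lambda$-versus-$-w_0\lambda$ discrepancy; indeed, for fixed ${\bm i}$ and $\lambda$ the polytopes $\Delta_{\bm i}(\lambda)$ and $\widetilde{\Delta}_{\bm i}(\lambda)$ are genuinely different, so any such identity by a single fixed unimodular matrix requires proof. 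The second step (that $*$ descends to $\c[U_{w_0}^-]$, preserves the cluster pattern $\mathcal{S}$, fixes ${\bf B}_{w_0}^{\rm up}$ setwise, and acts on extended $g$-vectors by a fixed linear map) is likewise asserted rather than proved, and it is not what the cited references verify: there, $\widetilde{\Delta}_{\bm i}(\lambda)$ is first realized in \cite{FO} as the Newton--Okounkov body of a higher-rank (highest-term type) valuation attached to ${\bm i}$, computed on a suitable basis, and in \cite{FO2} that valuation is matched with $v_{\mathbf{s}_{\bm i}^{\rm mut}}$ for a seed obtained from $\mathbf{s}_{\bm i}$ by an explicit mutation sequence, using the twist automorphism and its compatibility with the dual canonical basis \cite{KimOya}, rather than via an action of $*$ on the cluster pattern. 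So as a self-contained argument the proposal does not establish (2); as a citation-level justification it is on the same footing as the paper, which defers the whole statement to \cite{FO, FO2}.
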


\begin{rem}
Theorems \ref{t:NO_body_parametrized_by_seeds}, \ref{t:NO_body_parametrized_by_seeds_string}, \ref{t:NO_body_parametrized_by_seeds_NZ} are naturally extended to Schubert varieties $X_w$.
\end{rem}

\section{Main result}\label{s:main_result}

In this section, we prove our main result.
Let $\mathcal{S} = \{{\mathbf s}_t = (\mathbf{A}_t, \varepsilon_t)\}_{t \in \mathbb{T}}$ be a cluster pattern for the upper cluster algebra $\c[U_{w_0} ^-]$, and take a $\c$-basis ${\bf B}_{w_0}$ of $\c[U_{w_0} ^-]$ as in \cref{t:existence_of_pointed}.
For $t \in \mathbb{T}$, we set
\[\mathscr{S}_{\mathbf{s}_t} \coloneqq \bigcup_{\lambda \in P_+} \{(\lambda, g_{\mathbf{s}_t} (b)) \mid b \in {\bf B}_{w_0} [\lambda]\} \subseteq P_+ \times \z^J,\] 
and denote by $\mathscr{C}_{\mathbf{s}_t} \subseteq P_\r \times \r^J$ the smallest real closed cone containing $\mathscr{S}_{\mathbf{s}_t}$. 
In a way similar to the proof of \cite[Theorem 7.1 (2)]{FO2}, we obtain the following.

\begin{prop}
The real closed cone $\mathscr{C}_{\mathbf{s}_t}$ is a rational convex polyhedral cone, and it holds that $\mathscr{C}_{\mathbf{s}_t} \cap (P_+ \times \z^J) = \mathscr{S}_{\mathbf{s}_t}$.
\end{prop}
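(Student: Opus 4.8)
The plan is to imitate the proof of \cite[Theorem~7.1~(2)]{FO2}, globalizing it over $P_+$ by means of the multiplicative (Cartan) structure relating the spaces $H^0(G/B, \mathcal{L}_\lambda)$ for varying $\lambda \in P_+$. Two soft properties of $\mathscr{S}_{\mathbf{s}_t}$, valid for every $t \in \mathbb{T}$, will be the input. First, $\mathscr{S}_{\mathbf{s}_t}$ is a submonoid of $P_+ \times \z^J$: given $b = \iota_\lambda(\sigma) \in {\bf B}_{w_0}[\lambda]$ and $b' = \iota_\mu(\sigma') \in {\bf B}_{w_0}[\mu]$, the section $\tau_\lambda \tau_\mu$ coincides with $\tau_{\lambda + \mu}$ up to a nonzero scalar (both are lowest weight sections of $\mathcal{L}_{\lambda + \mu}$ not vanishing on $U^-$), and $H^0(G/B, \mathcal{L}_\lambda) \otimes H^0(G/B, \mathcal{L}_\mu) \to H^0(G/B, \mathcal{L}_{\lambda + \mu})$ is multiplication of sections, so $b b'$ is a scalar multiple of $\iota_{\lambda + \mu}(\sigma \sigma') \in \iota_{\lambda + \mu}(H^0(G/B, \mathcal{L}_{\lambda + \mu})) = \sum_{b'' \in {\bf B}_{w_0}[\lambda + \mu]} \c b''$ by property ${\rm (T)}_4$; then, by property ${\rm (T)}_2$ and \cref{p:property_valuation}, the vector $g_{\mathbf{s}_t}(b) + g_{\mathbf{s}_t}(b') = v_{\mathbf{s}_t}(b b')$ equals $g_{\mathbf{s}_t}(b'')$ for some $b'' \in {\bf B}_{w_0}[\lambda + \mu]$, so $(\lambda + \mu, g_{\mathbf{s}_t}(b) + g_{\mathbf{s}_t}(b')) \in \mathscr{S}_{\mathbf{s}_t}$. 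Second, $\mathscr{S}_{\mathbf{s}_t}$ is saturated in $P \times \z^J$: since $\mathcal{L}_{k\lambda} \simeq \mathcal{L}_\lambda^{\otimes k}$ and $\tau_{k\lambda}$ is a scalar multiple of $\tau_\lambda^{k}$, one has $\Delta_{\mathbf{s}_t}(k\lambda) = k\, \Delta_{\mathbf{s}_t}(\lambda)$, so $(k\lambda, k{\bm a}) \in \mathscr{S}_{\mathbf{s}_t}$ forces ${\bm a} \in \Delta_{\mathbf{s}_t}(\lambda) \cap \z^J = \{g_{\mathbf{s}_t}(b') \mid b' \in {\bf B}_{w_0}[\lambda]\}$ by \cref{t:NO_body_parametrized_by_seeds}~(3), i.e.\ $(\lambda, {\bm a}) \in \mathscr{S}_{\mathbf{s}_t}$.

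Granting these, the proposition reduces to showing that $\mathscr{S}_{\mathbf{s}_t}$ is finitely generated as a monoid: a finitely generated submonoid of a lattice spans a rational polyhedral cone, and a finitely generated \emph{saturated} submonoid $S$ of a lattice $L$ satisfies $S = \mathscr{C} \cap L$ for $\mathscr{C}$ its spanned cone (a lattice point of $\mathscr{C}$ is a nonnegative rational combination of elements of $S$, hence a positive integer multiple of it lies in $S$, and saturation returns the point itself), where one also uses that $\mathscr{C}_{\mathbf{s}_t} \cap (P \times \z^J) = \mathscr{C}_{\mathbf{s}_t} \cap (P_+ \times \z^J)$ because $\pi_1(\mathscr{S}_{\mathbf{s}_t}) = P_+$. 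Finite generation is then proved by induction along $\mathbb{T}$. For the base case I take the seed $\mathbf{s}_{\bm i}$ attached to a reduced word ${\bm i} \in R(w_0)$, which occurs at a vertex of $\mathbb{T}$: by \cref{t:NO_body_parametrized_by_seeds_string}~(2) one has $\mathscr{S}_{\mathbf{s}_{\bm i}} = (\mathrm{id}_{P_\r} \times \Upsilon_{\bm i}^{-1})(\mathscr{S}_{\bm i})$, and $\mathscr{S}_{\bm i} = \mathscr{C}_{\bm i} \cap (P_+ \times \z^N)$ with $\mathscr{C}_{\bm i}$ a rational polyhedral cone by \cite[Section~3.2]{BZ} and \cite[Section~1]{Lit}; since $\Upsilon_{\bm i}$ is unimodular, $\mathscr{S}_{\mathbf{s}_{\bm i}}$ is finitely generated and the whole proposition holds at $\mathbf{s}_{\bm i}$.

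For the inductive step, let $t \overset{k}{\text{---}} t'$ and assume the proposition at $t$. By property ${\rm (T)}_3$ we have $\mathscr{S}_{\mathbf{s}_{t'}} = (\mathrm{id}_{P_\r} \times \mu_k^T)(\mathscr{S}_{\mathbf{s}_t})$, with $\mu_k^T$ as in \eqref{eq:tropicalized_cluster}. Although $\mu_k^T$ is only piecewise-linear, it restricts to a unimodular linear automorphism of $\z^J$ on each of the two rational half-spaces $\{g_k \geq 0\}$ and $\{g_k \leq 0\}$ of $\r^J$. Writing $\mathscr{S}_{\mathbf{s}_t}$ as the union of its intersections with $P_+ \times \{g_k \geq 0\}$ and with $P_+ \times \{g_k \leq 0\}$: each intersection is the set of lattice points of a rational polyhedral cone (namely $\mathscr{C}_{\mathbf{s}_t}$ intersected with a rational half-space, by the inductive hypothesis $\mathscr{S}_{\mathbf{s}_t} = \mathscr{C}_{\mathbf{s}_t} \cap (P \times \z^J)$), hence a finitely generated monoid by Gordan's lemma; applying the corresponding linear branch of $\mu_k^T$, we see that $\mathscr{S}_{\mathbf{s}_{t'}}$ equals the union of two of its finitely generated submonoids. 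Being itself a monoid by the first soft property, $\mathscr{S}_{\mathbf{s}_{t'}}$ is then finitely generated, because such a monoid is generated by the union of the generating sets of the two submonoids; this completes the induction, and the proposition follows. I expect this inductive step to be the main obstacle: since $\mu_k^T$ is genuinely piecewise-linear, finite generation (equivalently, polyhedrality) cannot be transported along a single linear map, and the argument rests on splitting $\r^J$ into the two linearity domains of $\mu_k^T$ together with the elementary but decisive fact that a finite union of finitely generated submonoids of a monoid is finitely generated — which is what lets the convexity of the cone $\mathscr{C}_{\mathbf{s}_{t'}}$ survive the change of seed. The base case, in turn, rests on string cones being rational polyhedral (\cite{BZ, Lit}) and matching the cluster $g$-vector cones up to a unimodular transformation (\cite{FO2, FO}).
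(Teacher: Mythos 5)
Your proof is correct and follows essentially the approach the paper intends: the paper gives no self-contained proof, but defers to the argument of \cite[Theorem~7.1~(2)]{FO2}, which is precisely the combination of the string-cone base case (transported by the unimodular map $\Upsilon_{\bm i}$) and the transport of polyhedrality along one-step tropicalized mutations that you spell out. Your packaging via the three monoid properties --- closure under addition (from ${\rm (T)}_1$, ${\rm (T)}_2$, ${\rm (T)}_4$, and \cref{p:property_valuation}), saturation (from $\Delta_{\mathbf{s}_t}(k\lambda)=k\Delta_{\mathbf{s}_t}(\lambda)$ and \cref{t:NO_body_parametrized_by_seeds}~(3)), and finite generation by induction along $\mathbb{T}$ using the two linearity domains of $\mu_k^T$ and ${\rm (T)}_3$ --- is exactly how one makes the ``similar to [FO2]'' step precise when globalizing over $P_+$, and you correctly identify the decisive elementary fact that a submonoid which is a finite union of finitely generated submonoids is itself finitely generated.
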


Define $\pi_1, \pi_2, \omega_\lambda$ as in Section \ref{ss:semi-toric}. 
Then we have
\begin{align*}
\mathscr{S}_{\mathbf{s}_t} \cap \pi_1 ^{-1} (\z_{>0} \lambda) = \omega_\lambda(\mathcal{S}_{\mathbf{s}_t} (\lambda))\quad {\rm and}\quad \mathscr{C}_{\mathbf{s}_t} \cap \pi_1 ^{-1} (\r_{\geq 0} \lambda) = \omega_\lambda(\mathcal{C}_{\mathbf{s}_t} (\lambda))
\end{align*}
for $\lambda \in P_+$.
In particular, it holds that $\pi_2(\mathscr{C}_{\mathbf{s}_t} \cap \pi_1 ^{-1} (\lambda)) = \Delta_{\mathbf{s}_t} (\lambda)$.
For $v, w \in W$ such that $v \leq w$, define a subset $\mathscr{S}_{\mathbf{s}_t}(X_w^v) \subseteq P_+ \times \mathbb{Z}^J$ by 
\[\mathscr{S}_{\mathbf{s}_t}(X_w^v) \coloneqq \bigcup_{\lambda \in P_+} \{(\lambda, g_{\mathbf{s}_t} (b)) \mid b \in {\bf B}_{w_0} [\lambda, X_w^v]\},\] 
and denote by $\mathscr{C}_{\mathbf{s}_t}(X_w^v) \subseteq P_\mathbb{R} \times \mathbb{R}^J$ the smallest real closed cone containing $\mathscr{S}_{\mathbf{s}_t}(X_w^v)$. 
For $\lambda \in P_+$, we write
\begin{align*}
&\Delta_{\mathbf{s}_t}(\lambda, X_w^v) \coloneqq \pi_2(\mathscr{C}_{\mathbf{s}_t}(X_w^v) \cap \pi_1 ^{-1} (\lambda)).
\end{align*}
The following is the main result of the present paper. 

\begin{thm}\label{t:main_result_semi-toric}
Let $t \in \mathbb{T}$, and $v, w \in W$ such that $v \leq w$.
\begin{enumerate}
\item[{\rm (1)}] The set $\mathscr{C}_{\mathbf{s}_t}(X_w^v)$ is a union of faces of $\mathscr{C}_{\mathbf{s}_t}$, and the following equality holds:
\begin{align*}
\mathscr{C}_{\mathbf{s}_t}(X_w^v) \cap (P_+ \times \z^J) &= \mathscr{S}_{\mathbf{s}_t}(X_w^v).
\end{align*}
\item[{\rm (2)}] For $\lambda \in P_+$, the set $\Delta_{\mathbf{s}_t}(\lambda, X_w^v)$ is a union of faces of $\Delta_{\mathbf{s}_t}(\lambda)$.
\item[{\rm (3)}] For $\lambda \in P_{++}$, the Richardson variety $X_w^v$ degenerates to the union of irreducible closed toric subvarieties of $X(\Delta_{\mathbf{s}_t}(\lambda))$ corresponding to the faces of $\Delta_{\mathbf{s}_t}(\lambda, X_w^v)$.
\end{enumerate}
\end{thm}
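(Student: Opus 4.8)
The plan is to follow the structure of Morier-Genoud's argument as recalled in \cref{t:semi-toric_degenerations}, replacing string parametrizations by extended $g$-vectors and string polytopes by the cluster Newton--Okounkov bodies $\Delta_{\mathbf{s}_t}(\lambda)$. The three assertions are tightly linked: (1) is the combinatorial heart, (2) is a slicing consequence of (1), and (3) follows from (2) together with the explicit Rees-algebra construction of the toric degeneration. So the bulk of the work is (1).

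For part (1), the key input is that the dual canonical basis ${\bf B}_{w_0}^{\rm up}$ realizes $H^0(G/B,\mathcal{L}_\lambda)$ and its Richardson-variety quotients \emph{compatibly}: by property $({\rm T})_5$, ${\bf B}_{w_0}[\lambda, X_w^v]$ is exactly the subset of ${\bf B}_{w_0}[\lambda]$ whose images under $\Lambda_{X_w^v}^{(\lambda)}$ are nonzero, and these images form a basis of $H^0(X_w^v,\mathcal{L}_\lambda)$; concretely this subset is $\{G^{\rm up}(b)\mid b\in\widetilde{\mathcal B}_w^v(\lambda)\}$, corresponding to the intersection $\mathcal{B}_w(\lambda)\cap\mathcal{B}^v(\lambda)$ of a Demazure and an opposite Demazure crystal. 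First I would show that $\mathscr{C}_{\mathbf{s}_t}(X_w^v)$ is a union of faces of $\mathscr{C}_{\mathbf{s}_t}$. The right framework is that of \cite[Theorem 7.1 (2)]{FO2}: the cone $\mathscr{C}_{\mathbf{s}_t}$ is the cone over the ``global'' Newton--Okounkov body, its lattice points are exactly the pairs $(\lambda, g_{\mathbf{s}_t}(b))$ for $b\in{\bf B}_{w_0}[\lambda]$, and the semigroup $\mathscr{S}_{\mathbf{s}_t}$ is finitely generated and saturated. A point $(\lambda, g_{\mathbf{s}_t}(b))$ lies in $\mathscr{S}_{\mathbf{s}_t}(X_w^v)$ precisely when $b\in\widetilde{\mathcal B}_w^v(\lambda)$. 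I would then invoke the multiplicative structure: the product $G^{\rm up}(b)\cdot G^{\rm up}(b')$, when expanded in the dual canonical basis, has a unique ``lowest'' term whose $g$-vector is $g_{\mathbf{s}_t}(b)+g_{\mathbf{s}_t}(b')$ (this is how one checks $\mathscr{S}_{\mathbf{s}_t}$ is a semigroup in the first place), and — crucially — the Demazure/opposite-Demazure compatibility of \cref{p:properties of Demazure}(1) shows that membership in $\widetilde{\mathcal B}_w^v(\cdot)$ is preserved under taking this lowest term of a product, and conversely a lattice point of $\mathscr{C}_{\mathbf{s}_t}$ lying over $X_w^v$ decomposes into pieces that do. This is exactly the ``union of faces'' characterization in polyhedral terms, and it simultaneously yields $\mathscr{C}_{\mathbf{s}_t}(X_w^v)\cap(P_+\times\z^J)=\mathscr{S}_{\mathbf{s}_t}(X_w^v)$.

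Part (2) is then immediate: slicing a union of faces of $\mathscr{C}_{\mathbf{s}_t}$ by the affine subspace $\pi_1^{-1}(\lambda)$ gives a union of faces of $\Delta_{\mathbf{s}_t}(\lambda)=\pi_2(\mathscr{C}_{\mathbf{s}_t}\cap\pi_1^{-1}(\lambda))$, using that $\pi_1$ is linear and that $\mathscr{C}_{\mathbf{s}_t}$ is a rational polyhedral cone so that its faces meet $\pi_1^{-1}(\lambda)$ in faces of the slice. For part (3), I would argue as in the proof of \cref{t:semi-toric_degenerations}(3): the toric degeneration of $G/B$ to $X(\Delta_{\mathbf{s}_t}(\lambda))$ is the Rees-algebra construction attached to the $\z_{\geq0}\times\z^J$-filtration of $R(G/B,\mathcal L_\lambda)$ given by $v_{\mathbf{s}_t}$. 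Property $({\rm T})_5$ says $H^0(X_w^v,\mathcal L_\lambda)$ is the quotient of $H^0(G/B,\mathcal L_\lambda)$ by the span of those $\iota_\lambda(G^{\rm up}(b))$ with $b\notin\widetilde{\mathcal B}_w^v(\lambda)$, and the homogeneous ideal defining $X_w^v$ in $G/B$ is generated by such elements together with products; its associated graded is then the monomial/Stanley--Reisner-type ideal of the subfan of faces corresponding to $\Delta_{\mathbf{s}_t}(\lambda, X_w^v)$, so the flat limit of $X_w^v$ is the union of the toric subvarieties of $X(\Delta_{\mathbf{s}_t}(\lambda))$ indexed by those faces. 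Reducedness/compatibility of these intersections follows from the fact that the dual canonical basis is compatible with all the relevant ideals (again via \cref{p:properties of Demazure} and the paragraph after it).

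The main obstacle I expect is part (1), specifically proving that membership of a $g$-vector in $\mathscr{S}_{\mathbf{s}_t}(X_w^v)$ is a \emph{face condition} — that is, that whenever a lattice point $(\lambda, g)$ of $\mathscr{C}_{\mathbf{s}_t}$ is a nonnegative integer combination of generators and $(\lambda,g)\in\mathscr{S}_{\mathbf{s}_t}(X_w^v)$, then \emph{every} generator appearing with positive coefficient already lies in $\mathscr{S}_{\mathbf{s}_t}(X_w^v)$. Morier-Genoud gets this for string cones from the explicit piecewise-linear description of $\mathcal{B}_w(\lambda)\cap\mathcal{B}^v(\lambda)$ inside the string cone via the $\tilde e_i$-string/$\tilde f_i$-string recursions. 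In the cluster setting there is no such coordinate description available for a general seed $\mathbf{s}_t$, so instead I would transport the statement: prove it for one convenient seed $\mathbf{s}_{\bm i}$ where $g_{\mathbf{s}_{\bm i}}$ is related to the string parametrization by the unimodular map $\Upsilon_{\bm i}$ of \cref{t:NO_body_parametrized_by_seeds_string} (so \cref{t:semi-toric_degenerations} applies verbatim after the unimodular change of coordinates), and then propagate to all $t\in\mathbb{T}$ using property $({\rm T})_3$ — tropicalized cluster mutations $\mu_k^T$ are piecewise-linear unimodular bijections of $\z^J$ that send $g_{\mathbf{s}_t}(b)$ to $g_{\mathbf{s}_{t'}}(b)$ and hence carry $\mathscr{S}_{\mathbf{s}_t}(X_w^v)$ onto $\mathscr{S}_{\mathbf{s}_{t'}}(X_w^v)$ and faces of $\mathscr{C}_{\mathbf{s}_t}$ onto faces of $\mathscr{C}_{\mathbf{s}_{t'}}$. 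The delicate point here is that $\mu_k^T$ is only \emph{piecewise} linear, so ``union of faces'' is not obviously preserved; one must check that the wall where $\mu_k^T$ breaks linearity is itself compatible with the face decomposition, which should follow because that wall ($g_k=0$) is cut out by a coordinate hyperplane that is a facet-supporting hyperplane of $\mathscr{C}_{\mathbf{s}_t}$ in the relevant chart. Handling this bookkeeping cleanly is where most of the care will go.
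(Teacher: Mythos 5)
Your proposal has the right ingredients — the convex-geometric ``union of faces'' criterion, the multiplicative structure of the dual canonical basis, and the transport via $\Upsilon_{\bm i}$ and tropicalized mutations — but the way you assemble them creates a genuine gap that you yourself flag without resolving, and the paper's assembly cleanly avoids it.

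Concretely, the paper applies a lemma in convex geometry (\cref{l:union_of_faces}) that certifies ``$\Gamma$ is the lattice-point set of a union of faces of $C$'' from exactly two conditions: (i) ${\bm a} + {\bm a}' \notin \Gamma$ whenever ${\bm a} \in C \cap \z^m$ and ${\bm a}' \in (C\cap\z^m)\setminus\Gamma$, and (ii) $k{\bm a} \in \Gamma$ for $k>0$, ${\bm a}\in\Gamma$. The paper then verifies (i) and (ii) \emph{within the fixed seed $\mathbf{s}_t$ separately}, by different means. Condition (i) is verified directly in every seed $t$ via property $({\rm T})_5$ and the multiplicativity of the valuation: one computes $\Lambda_{X_w^v}^{(\lambda+\lambda')}(b\cdot b')=0$, expands $b\cdot b'$ in ${\bf B}_{w_0}[\lambda+\lambda']\setminus{\bf B}_{w_0}[\lambda+\lambda', X_w^v]$, and concludes via $({\rm T})_2$ and \cref{p:property_valuation} that $v_{\mathbf{s}_t}(b\cdot b')$, which equals $g_{\mathbf{s}_t}(b)+g_{\mathbf{s}_t}(b')$, is one of those $g$-vectors. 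No mutations are involved. Condition (ii) — the only place where the string/Morier-Genoud input is used — is reduced to the string seed by the piecewise-linear bijection $\overline\Upsilon \coloneqq \Upsilon_{\bm i}\circ\mu_{j_p}^T\circ\cdots\circ\mu_{j_1}^T$; this works because (ii) is a \emph{pointwise} membership statement, and $\overline\Upsilon$ is positively homogeneous so $\overline\Upsilon(k{\bm a}) = k\overline\Upsilon({\bm a})$, reducing to \cref{t:semi-toric_degenerations}(1) via \cref{p:g-vectors_for_Richardson}.

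Your proposal, by contrast, aims to transport the \emph{union-of-faces structure itself} from $\mathbf{s}_{\bm i}$ to a general $t$ along the piecewise-linear maps $\mu_k^T$. This is the step that fails: a piecewise-linear bijection does not carry faces of a polyhedral cone to faces, even if (as \cref{t:NO_body_parametrized_by_seeds}(4) guarantees) it carries the cone onto the cone. Your suggested fix — that the breaking wall $\{g_k=0\}$ is compatible with the face decomposition — is not something the paper proves or needs, and there is no reason to expect the hyperplane $\{g_k=0\}$ to intersect $\mathscr{C}_{\mathbf{s}_t}$ in a union of faces. The paper sidesteps this entirely by never transporting the polyhedral structure: only the scalar-dilation statement (ii) is transported, and for that the piecewise-linearity is harmless. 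Your treatment of condition (i) via the multiplicative structure is essentially correct and matches the paper; your treatment of parts (2) and (3) is also aligned with the paper (slicing plus the Rees construction as in Morier-Genoud). The fix to your proposal is simply to stop trying to transport faces, and instead observe that you only need to transport the saturation condition (ii), using the positive homogeneity of $\overline\Upsilon$.
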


To prove \cref{t:main_result_semi-toric}, we use the following lemma in convex geometry as the proof of \cite[Proposition 3.5]{Mor}. 

\begin{lem}[{see, for instance, \cite[Lemma 3.6]{Mor}}]\label{l:union_of_faces}
Fix $m \in \z_{>0}$. 
Let $C \subseteq \r^m$ be a rational convex polyhedral cone, and assume that $C$ is strongly convex. 
If a subset $\Gamma \subseteq C \cap \z^m$ satisfies the following conditions:
\begin{enumerate}
\item[{\rm (i)}] ${\bm a} + {\bm a}^\prime \notin \Gamma$ for all ${\bm a} \in C \cap \z^m$ and ${\bm a}^\prime \in (C \cap \z^m) \setminus \Gamma$;
\item[{\rm (ii)}] $k {\bm a} \in \Gamma$ for all $k \in \z_{> 0}$ and ${\bm a} \in \Gamma$,
\end{enumerate}
then there uniquely exists a union $\Gamma_\r$ of faces of $C$ such that $\Gamma_\r \cap \z^m = \Gamma$. 
\end{lem}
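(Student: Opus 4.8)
The plan is to produce the union of faces explicitly and then read off uniqueness from the way it is built. Set
\[
\mathcal{F}\coloneqq\{F\mid F\text{ is a face of }C,\ F\cap\z^m\subseteq\Gamma\},\qquad \Gamma_\r\coloneqq\bigcup_{F\in\mathcal{F}}F,
\]
which is manifestly a union of faces of $C$. The entire content of the lemma is then: (a) $\Gamma_\r\cap\z^m=\Gamma$, and (b) any union of faces of $C$ with this property equals $\Gamma_\r$. Throughout I use that $C$, being rational and strongly convex, is the cone over finitely many lattice vectors, as is every face of $C$, and that every point of $C$ lies in the relative interior of a unique face (its minimal face).

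\textbf{Existence, easy inclusion.} Here no hypothesis is needed: $\Gamma_\r\cap\z^m=\bigcup_{F\in\mathcal{F}}(F\cap\z^m)\subseteq\Gamma$ directly from the definition of $\mathcal{F}$.

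\textbf{Existence, the main inclusion $\Gamma\subseteq\Gamma_\r\cap\z^m$.} This is where conditions (i) and (ii) enter; conceptually they say that $(C\cap\z^m)\setminus\Gamma$ is an ideal of the monoid $C\cap\z^m$ and that $\Gamma$ is closed under positive dilation, and the point is to upgrade this to saturation with respect to the face structure. Let ${\bm a}\in\Gamma$ and let $F$ be the minimal face of $C$ containing ${\bm a}$, so ${\bm a}$ lies in the relative interior of the cone $F$. I claim $F\cap\z^m\subseteq\Gamma$, which yields $F\in\mathcal{F}$ and hence ${\bm a}\in F\subseteq\Gamma_\r$. Fix ${\bm b}\in F\cap\z^m$. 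Since ${\bm a}$ is in the relative interior of $F$, for $\epsilon>0$ small the point ${\bm a}-\epsilon{\bm b}$ still lies in $F$; taking $\epsilon=1/N$ with $N\in\z_{>0}$ and rescaling by $N$ gives ${\bm c}\coloneqq N{\bm a}-{\bm b}\in F$, and ${\bm c}\in\z^m$ because ${\bm a},{\bm b}\in\z^m$, so ${\bm c}\in F\cap\z^m\subseteq C\cap\z^m$. Now $N{\bm a}={\bm b}+{\bm c}$, and $N{\bm a}\in\Gamma$ by (ii). If ${\bm b}\notin\Gamma$, then ${\bm b}\in(C\cap\z^m)\setminus\Gamma$ and ${\bm c}\in C\cap\z^m$, so (i) forces ${\bm c}+{\bm b}\notin\Gamma$, contradicting $N{\bm a}\in\Gamma$. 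Hence ${\bm b}\in\Gamma$, proving the claim; therefore $\Gamma_\r\cap\z^m=\Gamma$.

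\textbf{Uniqueness.} Let $\Gamma_\r'$ be any union of faces of $C$ with $\Gamma_\r'\cap\z^m=\Gamma$. If ${\bm x}\in\Gamma_\r'$, then ${\bm x}$ lies in some face $F'$ of $C$ with $F'\subseteq\Gamma_\r'$, so $F'\cap\z^m\subseteq\Gamma_\r'\cap\z^m=\Gamma$, giving $F'\in\mathcal{F}$ and ${\bm x}\in F'\subseteq\Gamma_\r$; thus $\Gamma_\r'\subseteq\Gamma_\r$. Conversely, given $F\in\mathcal{F}$, sum lattice generators of the extremal rays of $F$ (and take $0$ if $F=\{0\}$) to get a lattice point ${\bm v}$ in the relative interior of $F$; then ${\bm v}\in F\cap\z^m\subseteq\Gamma=\Gamma_\r'\cap\z^m$, so ${\bm v}$ lies in some face $F'\subseteq\Gamma_\r'$, and since $F$ is the minimal face of $C$ containing ${\bm v}$ we get $F\subseteq F'\subseteq\Gamma_\r'$; hence $\Gamma_\r=\bigcup_{F\in\mathcal{F}}F\subseteq\Gamma_\r'$. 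Therefore $\Gamma_\r'=\Gamma_\r$, which completes the proof.

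\textbf{Main obstacle.} The delicate step is the main inclusion of the existence part: one must extract from the purely combinatorial hypotheses (i) and (ii) the geometric fact that a point of $\Gamma$, after suitable scaling, dominates in $F$ every lattice point of its minimal face $F$, so that the ideal/dilation conditions propagate to all of $F\cap\z^m$. The auxiliary facts used — existence of a relative-interior lattice point in a rational strongly convex cone, and the relative-interior face decomposition of $C$ — are standard and do not present difficulty.
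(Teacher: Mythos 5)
Your proof is correct, and since the paper itself gives no argument for this lemma (it simply cites \cite[Lemma 3.6]{Mor}), there is nothing in the text to diverge from; your argument via minimal faces is essentially the standard one behind the cited reference. The key step — using (ii) to replace ${\bm a}$ by $N{\bm a}$ and writing $N{\bm a} = {\bm b} + {\bm c}$ with ${\bm c} = N{\bm a} - {\bm b} \in F \cap \z^m$, so that (i) forces every lattice point ${\bm b}$ of the minimal face of ${\bm a}$ into $\Gamma$ — is exactly where the hypotheses are needed, and your uniqueness argument via a lattice point in the relative interior of each face (using rationality and strong convexity) is sound.
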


We also need some properties of the extended $g$-vector $g_{\mathbf{s}_{\bm i}}$ for ${\bm i} \in R(w_0)$. 

\begin{prop}\label{p:g-vectors_for_Richardson}
Let ${\bm i} \in R(w_0)$, $\lambda \in P_+$, and $v, w \in W$ such that $v \leq w$.
Then it holds that
\begin{align*}
\Phi_{\bm i}\left(\mathcal{B}(\lambda) \setminus (\mathcal{B}_w(\lambda) \cap \mathcal{B}^v(\lambda))\right) &= \{\Upsilon_{\bm i} (g_{\mathbf{s}_{\bm i}} (b)) \mid b \in {\bf B}_{w_0} [\lambda] \setminus {\bf B}_{w_0} [\lambda, X_w^v]\},\\
\Phi_{\bm i}(\mathcal{B}_w(\lambda) \cap \mathcal{B}^v(\lambda)) &= \{\Upsilon_{\bm i} (g_{\mathbf{s}_{\bm i}} (b)) \mid b \in {\bf B}_{w_0} [\lambda, X_w^v]\}.
\end{align*}
\end{prop}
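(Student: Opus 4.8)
The plan is to transport the known identity $\Phi_{\bm i}(\mathcal{B}(\lambda)) = \{\Upsilon_{\bm i}(g_{\mathbf{s}_{\bm i}}(b)) \mid b \in {\bf B}_{w_0}[\lambda]\}$ from \cref{t:NO_body_parametrized_by_seeds_string}(2) through the identification of the two combinatorial bases of $H^0(X_w^v, \mathcal{L}_\lambda)$. First I would recall that by \cref{p:relation_between_crystals_G/B}(3), the upper global basis element $G^{\rm up}(b) \in {\bf B}_{w_0}^{\rm up}$ indexed by $b \in \widetilde{\mathcal{B}}(\lambda)$ satisfies $G^{\rm up}(b) = \iota_\lambda(G^{\rm up}_\lambda(\pi_\lambda(b)))$; hence the bijection $\pi_\lambda \colon \widetilde{\mathcal{B}}(\lambda) \to \mathcal{B}(\lambda)$ translates ${\bf B}_{w_0}[\lambda] \leftrightarrow \mathcal{B}(\lambda)$ and, via property $({\rm T})_5$ together with the paragraph after \cref{p:properties of Demazure}, the subset ${\bf B}_{w_0}[\lambda, X_w^v] = \{G^{\rm up}(b) \mid b \in \widetilde{\mathcal{B}}_w^v(\lambda)\}$ corresponds exactly to $\mathcal{B}_w(\lambda) \cap \mathcal{B}^v(\lambda)$ under $\pi_\lambda$. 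Thus ${\bf B}_{w_0}[\lambda] \setminus {\bf B}_{w_0}[\lambda, X_w^v]$ corresponds to $\mathcal{B}(\lambda) \setminus (\mathcal{B}_w(\lambda) \cap \mathcal{B}^v(\lambda))$.

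The second step is to verify that the bijection $\Phi_{\bm i} \colon \mathcal{B}(\lambda) \to \Phi_{\bm i}(\mathcal{B}(\lambda))$ and the bijection ${\bf B}_{w_0}[\lambda] \to \{\Upsilon_{\bm i}(g_{\mathbf{s}_{\bm i}}(b))\}$ given by $b \mapsto \Upsilon_{\bm i}(g_{\mathbf{s}_{\bm i}}(b))$ are compatible with the correspondence of the previous paragraph — i.e. that the diagram commutes, so that $\Phi_{\bm i}(\pi_\lambda(b)) = \Upsilon_{\bm i}(g_{\mathbf{s}_{\bm i}}(G^{\rm up}(b)))$ for each $b \in \widetilde{\mathcal{B}}(\lambda)$. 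This is precisely what underlies \cref{t:NO_body_parametrized_by_seeds_string}(2) (which already asserts the equality of the two images); what I need is the sharper pointwise statement, which follows from the construction in \cite{FO2} relating the string parametrization to the extended $g$-vector via $\Upsilon_{\bm i}$ together with \cref{t:upper_Bruhat}. Granting this pointwise compatibility, both claimed equalities are immediate: restricting the pointwise bijection to the subset $\widetilde{\mathcal{B}}_w^v(\lambda)$ (resp. its complement in $\widetilde{\mathcal{B}}(\lambda)$) and taking images on each side gives exactly the two displayed identities, since $\Phi_{\bm i}$ is injective on $\mathcal{B}(\lambda)$ and the second displayed set in the hypotheses partitions according to membership in ${\bf B}_{w_0}[\lambda, X_w^v]$.

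The main obstacle I anticipate is precisely the pointwise compatibility in the second step: \cref{t:NO_body_parametrized_by_seeds_string}(2) as quoted gives only the set-level equality $\Phi_{\bm i}(\mathcal{B}(\lambda)) = \{\Upsilon_{\bm i}(g_{\mathbf{s}_{\bm i}}(b)) \mid b \in {\bf B}_{w_0}[\lambda]\}$, not that the two parametrizing maps agree element-by-element under $\pi_\lambda$. One must either extract this from the proof of that theorem in \cite{FO2} (where it is effectively established, since the identification is built crystal-theoretically and respects the Kashiwara operators on both sides), or argue indirectly: both $b \mapsto \Phi_{\bm i}(\pi_\lambda(b))$ and $b \mapsto \Upsilon_{\bm i}(g_{\mathbf{s}_{\bm i}}(G^{\rm up}(b)))$ are injections $\widetilde{\mathcal{B}}(\lambda) \to \z^N$ with the same image, and one shows they send a distinguished element (e.g. the one corresponding to $b_\lambda$, giving the zero vector or the highest-weight corner) to the same point and intertwine the combinatorial structure, forcing them to coincide. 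Once that is in place the rest of the argument is purely formal bookkeeping with the dictionary $\widetilde{\mathcal{B}}(\lambda) \leftrightarrow \mathcal{B}(\lambda) \leftrightarrow {\bf B}_{w_0}[\lambda]$.
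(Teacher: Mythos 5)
Your plan reduces the statement to a pointwise compatibility $\Phi_{\bm i}(\pi_\lambda(b)) = \Upsilon_{\bm i}(g_{\mathbf{s}_{\bm i}}(G^{\rm up}(b)))$ for each $b \in \widetilde{\mathcal{B}}(\lambda)$, and you rightly flag this as the crux, but you do not establish it; the suggested workaround (``agree at a distinguished element and intertwine the combinatorial structure'') is an outline rather than an argument. The obstacle is more serious than it may look: the basis of $H^0(G/B,\mathcal{L}_\lambda)$ that \cite{FO} uses to relate the lowest-term valuation to the string parametrization $\Phi_{\bm i}$ is $\{\Xi^{\rm up}_\lambda(b)\}_{b\in\mathcal{B}(\lambda)}$, which is \emph{not} assumed to coincide with the dual canonical basis. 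So pairing $\Xi^{\rm up}_\lambda(\pi_\lambda(b))$ with $G^{\rm up}(b)$ and asserting that their valuations agree is precisely what needs to be justified, and nothing in what you cite does this.

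The paper avoids the pointwise question entirely by working at the level of the kernel $I_w^v(\lambda) \subseteq H^0(G/B,\mathcal{L}_\lambda)$ of restriction to $X_w^v$. By the compatibility of $\{\Xi^{\rm up}_\lambda(b)\}$ with Richardson varieties (extending \cite[Proposition 4.4]{FO} to the opposite case) and by property $({\rm T})_5$ for ${\bf B}_{w_0}$, the two relevant sub-families each span $\iota_\lambda(I_w^v(\lambda))$. Then \cref{p:property_valuation} (one-dimensional leaves) shows that the set of valuations of nonzero elements of $\iota_\lambda(I_w^v(\lambda))$ equals the set of valuations of \emph{either} adapted spanning sub-family; combining this with $\tilde{v}_{\bm i}^{\rm low}(\iota_\lambda(\Xi^{\rm up}_\lambda(b))) = \Phi_{\bm i}(b)$, with property $({\rm T})_2$, and with the identity $\tilde{v}_{\bm i}^{\rm low} = \Upsilon_{\bm i}\circ v_{\mathbf{s}_{\bm i}}$ on $\iota_\lambda(H^0(G/B,\mathcal{L}_\lambda))$ from \cite{FO2}, yields the first displayed equality, and the second follows by set subtraction against \cref{t:NO_body_parametrized_by_seeds_string}\,(2). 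In short, the kernel-plus-one-dimensional-leaves argument substitutes for the pointwise comparison of the two bases; if you want to keep your route, you would in effect have to reprove that comparison.
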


\begin{proof}
For ${\bm i} = (i_1, i_2, \ldots, i_N) \in R(w_0)$, we define a valuation $\tilde{v}_{\bm i} ^{\rm low} \colon \mathbb{C}(G/B) \setminus \{0\} \rightarrow \z^N$ as in \cite[Definition 2.10]{FO2}.
The definition of $\tilde{v}_{\bm i} ^{\rm low}$ is slightly different from the one in \cite[Section 2]{FO} because the order of coordinates of its values is reversed.
In the paper \cite{FO}, the author and Oya related the valuation $\tilde{v}_{\bm i} ^{\rm low}$ with the string parametrization $\Phi_{\bm i}$ using a specific $\c$-basis $\{\Xi^{\rm up}_\lambda(b) \mid b \in \mathcal{B}(\lambda)\}$ of $H^0(G/B, \mathcal{L}_\lambda)$ which has positivity properties. 
By \cite[Proposition 4.4]{FO}, the basis is compatible with Schubert varieties $X_w$, that is, $\{\pi_w (\Xi^{\rm up} _\lambda(b)) \mid b \in \mathcal{B}_w(\lambda)\}$ forms a $\mathbb{C}$-basis of $H^0(X_w, \mathcal{L}_\lambda)$ and $\pi_w (\Xi^{\rm up} _\lambda(b)) = 0$ for all $b \in \mathcal{B}(\lambda) \setminus \mathcal{B}_w(\lambda)$, where we recall that $\pi_w \colon H^0(G/B, \mathcal{L}_\lambda) \rightarrow H^0(X_w, \mathcal{L}_\lambda)$ denotes the restriction map.
In a way similar to the proof of \cite[Proposition 4.4]{FO}, the basis is compatible with opposite Schubert varieties $X^v$ and hence with Richardson varieties $X_w^v$. 
Since we have $\tilde{v}_{\bm i} ^{\rm low} (\iota_\lambda(\Xi^{\rm up}_\lambda(b))) = \Phi_{\bm i}(b)$ for $b \in \mathcal{B}(\lambda)$ by \cite[Proposition 3.28 and Corollaries 3.20 (2), 5.2]{FO}, it follows by \cref{p:property_valuation} that
\[\Phi_{\bm i}\left(\mathcal{B}(\lambda) \setminus (\mathcal{B}_w(\lambda) \cap \mathcal{B}^v(\lambda))\right) = \{\tilde{v}_{\bm i} ^{\rm low} (\iota_\lambda(\sigma)) \mid \sigma \in I_w^v(\lambda) \setminus \{0\}\},\]
where $I_w^v(\lambda)$ denotes the kernel of the restriction map $H^0(G/B, \mathcal{L}_\lambda) \rightarrow H^0(X_w^v, \mathcal{L}_\lambda)$. 
Then the equality $\tilde{v}_{\bm i} ^{\rm low} (\iota_\lambda(\sigma)) = \Upsilon_{\bm i}(v_{\mathbf{s}_{\bm i}} (\iota_\lambda(\sigma)))$ holds for $\sigma \in I_w^v(\lambda) \setminus \{0\}$ by \cite[Theorem 6.5]{FO2}. 
Since ${\bf B}_{w_0} [\lambda] \setminus {\bf B}_{w_0} [\lambda, X_w^v]$ is a $\c$-basis of $\iota_\lambda (I_w^v(\lambda))$ by property ${\rm (T)_5}$, the first equality of the proposition holds by property ${\rm (T)_2}$ and \cref{p:property_valuation}.
Now the second equality follows from the first equality since the unimodular transformation $\Upsilon_{\bm i}$ gives a bijective map between $\{g_{\mathbf{s}_{\bm i}} (b) \mid b \in {\bf B}_{w_0} [\lambda]\}$ and $\Phi_{\bm i} (\mathcal{B}(\lambda))$ by \cref{t:NO_body_parametrized_by_seeds_string} (2).
\end{proof}

\begin{rem}
Let ${\bm i} \in R(w_0)$. 
By \cref{p:g-vectors_for_Richardson}, we see that \cref{t:main_result_semi-toric} for the FZ-seed $\mathbf{s}_{\bm i}$ is precisely \cref{t:semi-toric_degenerations} up to $\Upsilon_{\bm i}$. 
Hence our semi-toric degenerations of Richardson varieties in \cref{t:main_result_semi-toric} generalize Morier-Genoud's semi-toric degenerations \cite{Mor}.  
\end{rem}

\begin{proof}[{Proof of \cref{t:main_result_semi-toric}}]
We first prove that $\mathscr{S}_{\mathbf{s}_t}(X_w^v)$ is the set of lattice points in a union of faces of $\mathscr{C}_{\mathbf{s}_t}$. 
To do that, we show conditions (i), (ii) in \cref{l:union_of_faces} for $\mathscr{S}_{\mathbf{s}_t}(X_w^v)$; note that the cone $\mathscr{C}_{\mathbf{s}_t}$ is strongly convex since it is included in $(\sum_{i \in I} \r_{\geq 0} \varpi_i) \times \r^N$ and since $\mathscr{C}_{\mathbf{s}_t} \cap (\{0\} \times \r^N) = \{(0, 0)\}$ (see \cite[Theorem 2.30]{KK2}).

To prove condition (i), let us take arbitrary elements ${\bm a} \in \mathscr{S}_{\mathbf{s}_t}$ and ${\bm a}^\prime \in \mathscr{S}_{\mathbf{s}_t} \setminus \mathscr{S}_{\mathbf{s}_t}(X_w^v)$.
Then there exist $\lambda, \lambda^\prime \in P_+$, $b \in {\bf B}_{w_0} [\lambda]$, and $b^\prime \in {\bf B}_{w_0} [\lambda^\prime] \setminus {\bf B}_{w_0} [\lambda^\prime, X_w^v]$ such that ${\bm a} = (\lambda, g_{\mathbf{s}_t} (b))$ and ${\bm a}^\prime = (\lambda^\prime, g_{\mathbf{s}_t} (b^\prime))$. 
Since $\tau_{\lambda+\lambda^\prime} = c \cdot \tau_{\lambda} \cdot \tau_{\lambda^\prime}$ for some $c \in \c^\times$, it follows that $b \cdot b^\prime \in \iota_{\lambda+\lambda^\prime} (H^0(G/B, \mathcal{L}_{\lambda+\lambda^\prime}))$.
Since $\Lambda_{X_w^v}^{(\lambda^\prime)} (b^\prime) = 0$ by property ${\rm (T)_5}$, we see that 
\begin{align*}
\Lambda_{X_w^v}^{(\lambda+\lambda^\prime)} (b \cdot b^\prime) &= (b \cdot b^\prime \cdot \tau_{\lambda+\lambda^\prime})|_{X_w^v}\quad(\text{by the definition of}\ \Lambda_{X_w^v}^{(\lambda+\lambda^\prime)})\\
&= c \cdot (b \cdot b^\prime \cdot \tau_{\lambda} \cdot \tau_{\lambda^\prime})|_{X_w^v}\\
&= c \cdot \Lambda_{X_w^v}^{(\lambda)} (b) \cdot \Lambda_{X_w^v}^{(\lambda^\prime)} (b^\prime) = 0.
\end{align*}
Hence property ${\rm (T)_5}$ implies that there exist $b_1, b_2, \ldots, b_\ell \in {\bf B}_{w_0} [\lambda+\lambda^\prime] \setminus {\bf B}_{w_0} [\lambda+\lambda^\prime, X_w^v]$ and $c_1, c_2, \ldots, c_\ell \in \c^\times$ such that 
\[b \cdot b^\prime = c_1 b_1 + c_2 b_2 + \cdots + c_\ell b_\ell.\]
Then we see by property ${\rm (T)_2}$ and \cref{p:property_valuation} that 
\[v_{\mathbf{s}_t} (b \cdot b^\prime) \in \{v_{\mathbf{s}_t} (b_1), \ldots, v_{\mathbf{s}_t} (b_\ell)\} = \{g_{\mathbf{s}_t} (b_1), \ldots, g_{\mathbf{s}_t} (b_\ell)\}.\]
Hence it holds that $(\lambda+\lambda^\prime, v_{\mathbf{s}_t} (b \cdot b^\prime)) \in \mathscr{S}_{\mathbf{s}_t} \setminus \mathscr{S}_{\mathbf{s}_t}(X_w^v)$. 
Since we have 
\begin{align*}
{\bm a} + {\bm a}^\prime &= (\lambda+\lambda^\prime, g_{\mathbf{s}_t} (b) + g_{\mathbf{s}_t} (b^\prime))\\
&= (\lambda+\lambda^\prime, v_{\mathbf{s}_t} (b) + v_{\mathbf{s}_t} (b^\prime)) = (\lambda+\lambda^\prime, v_{\mathbf{s}_t} (b \cdot b^\prime)),
\end{align*}
it follows that ${\bm a} + {\bm a}^\prime \notin \mathscr{S}_{\mathbf{s}_t}(X_w^v)$.
This proves condition (i). 

To prove condition (ii), we take arbitrary elements $k \in \z_{>0}$ and ${\bm a} \in \mathscr{S}_{\mathbf{s}_t}(X_w^v)$.
Then there exist $\lambda \in P_+$ and $b \in {\bf B}_{w_0} [\lambda, X_w^v]$ such that ${\bm a} = (\lambda, g_{\mathbf{s}_t} (b))$. 
Let us show 
\begin{equation}\label{eq:goal_extended_g-vector}
\begin{aligned}
k g_{\mathbf{s}_t} (b) \in \{g_{\mathbf{s}_t} (b^\prime) \mid b^\prime \in {\bf B}_{w_0} [k\lambda, X_w^v]\},
\end{aligned}
\end{equation}
which implies that $k {\bm a} = (k \lambda, k g_{\mathbf{s}_t} (b)) \in \mathscr{S}_{\mathbf{s}_t}(X_w^v)$.
Fix ${\bm i} \in R(w_0)$. 
Since $\mathbf{s}_t$ and $\mathbf{s}_{\bm i}$ are mutation equivalent, there exists a sequence $(\mu_{j_1}, \mu_{j_2}, \ldots, \mu_{j_p})$ of mutations such that $\mathbf{s}_{\bm i} = \mu_{j_p} \circ \cdots \circ \mu_{j_2} \circ \mu_{j_1} (\mathbf{s}_t)$. 
We write $\overline{\Upsilon} \coloneqq \Upsilon_{\bm i} \circ \mu_{j_p}^T \circ \cdots \circ \mu_{j_2}^T \circ \mu_{j_1}^T$.
Since $\Upsilon_{\bm i}$ and $\mu_{j_1}^T, \mu_{j_2}^T, \ldots, \mu_{j_p}^T$ are bijective and piecewise-linear, the composition $\overline{\Upsilon}$ is also a bijective piecewise-linear map. 
Hence, in order to prove \eqref{eq:goal_extended_g-vector}, it suffices to show that 
\[\overline{\Upsilon} (k g_{\mathbf{s}_t} (b))\ (= k \overline{\Upsilon} (g_{\mathbf{s}_t} (b))) \in \{\overline{\Upsilon}(g_{\mathbf{s}_t} (b^\prime)) \mid b^\prime \in {\bf B}_{w_0} [k\lambda, X_w^v]\}.\] 
By \cref{p:g-vectors_for_Richardson} and property ${\rm (T)_3}$, we have $\overline{\Upsilon}(g_{\mathbf{s}_t} (b)) = \Upsilon_{\bm i} (g_{\mathbf{s}_{\bm i}} (b)) \in \Phi_{\bm i}(\mathcal{B}_w(\lambda) \cap \mathcal{B}^v(\lambda))$ and 
\[\{\overline{\Upsilon}(g_{\mathbf{s}_t} (b^\prime)) \mid b^\prime \in {\bf B}_{w_0} [k\lambda, X_w^v]\} = \Phi_{\bm i}(\mathcal{B}_w(k\lambda) \cap \mathcal{B}^v(k\lambda)).\]
Hence the assertion follows by \cref{t:semi-toric_degenerations} (1).
This proves condition (ii), which concludes by \cref{l:union_of_faces} that $\mathscr{S}_{\mathbf{s}_t}(X_w^v)$ is the set of lattice points in a union of faces of $\mathscr{C}_{\mathbf{s}_t}$. 
This implies part (1) of the theorem by the definition of $\mathscr{C}_{\mathbf{s}_t}(X_w^v)$. 
Finally, part (2) of the theorem is an immediate consequence of part (1), and part (3) follows from parts (1), (2) in a way similar to the proof of \cite[Theorem 3.7]{Mor}.
\end{proof}

\begin{ex}
Let $G = SL_{n+1} (\c)$, $\lambda \in P_+$, and 
\[{\bm i}_A \coloneqq (1, 2, 1, 3, 2, 1, \ldots, n, n-1, \ldots, 1) \in \z^{\frac{n(n+1)}{2}}.\]
Then Littelmann \cite[Corollary 5]{Lit} proved that the string polytope $\Delta_{{\bm i}_A}(\lambda)$ is unimodularly equivalent to the Gelfand--Tsetlin polytope $GT(\lambda)$.
Under this unimodular transformation, the author \cite[Corollary 5.2]{Fuj2} proved that Morier-Genoud's semi-toric degeneration \cite{Mor} of $X^v$ for $\Delta_{{\bm i}_A}(\lambda)$ has the same limit as Kogan--Miller's semi-toric degeneration \cite{KoM} of $X^v$. 
In this sense, Morier-Genoud's semi-toric degenerations and hence our semi-toric degenerations can be regarded as generalizations of Kogan--Miller's semi-toric degenerations. 
\end{ex}

Let ${\bm i} \in R(w_0)$, $\lambda \in P_+$, and $v, w \in W$ such that $v \leq w$.
Replacing $\Phi_{\bm i}$ by $\Psi_{\bm i}$ in the definitions of $\mathscr{S}_{\bm i}, \mathscr{C}_{\bm i}, \mathscr{S}_{\bm i}(X_w^v), \mathscr{C}_{\bm i}(X_w^v)$, and $\Delta_{\bm i}(\lambda, X_w^v)$ in Section \ref{ss:semi-toric}, we define $\widetilde{\mathscr{S}}_{\bm i}, \widetilde{\mathscr{C}}_{\bm i}, \widetilde{\mathscr{S}}_{\bm i}(X_w^v), \widetilde{\mathscr{C}}_{\bm i}(X_w^v)$, and $\widetilde{\Delta}_{\bm i}(\lambda, X_w^v)$, respectively.

\begin{prop}[{see, for instance, \cite[Proposition 6.7]{Fuj}}] 
The real closed cone $\widetilde{\mathscr{C}}_{\bm i}$ is a rational convex polyhedral cone, and it holds that $\widetilde{\mathscr{C}}_{\bm i} \cap (P_+ \times \z^N) = \widetilde{\mathscr{S}}_{\bm i}$.
\end{prop}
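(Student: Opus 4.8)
The plan is to reduce the statement to its cluster-theoretic counterpart by transporting everything along the unimodular transformation $\widetilde{\Upsilon}_{\bm i}$ from \cref{t:NO_body_parametrized_by_seeds_NZ}. Recall that $\widetilde{\Upsilon}_{\bm i}$ is multiplication by a unimodular $J \times J$ matrix $N_{\bm i}$ (here ${\bm i} \in R(w_0)$, so $|J| = N = \ell(w_0)$), hence an automorphism of the lattice $\z^N$, and that the seed $\mathbf{s}_{\bm i}^{\rm mut}$ occurring there lies in the fixed cluster pattern $\mathcal{S}$ for $\c[U_{w_0}^-]$ by \cref{t:NO_body_parametrized_by_seeds_NZ} (1). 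Set $\Theta \coloneqq {\rm id}_{P_\r} \times \widetilde{\Upsilon}_{\bm i}$, a linear automorphism of $P_\r \times \r^N$; since it acts as the identity on the first factor and $\widetilde{\Upsilon}_{\bm i}$ is unimodular, $\Theta$ preserves the lattice $P \times \z^N$ and also the subset $P_+ \times \z^N$.

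First I would record that $\widetilde{\mathscr{S}}_{\bm i} = \Theta(\mathscr{S}_{\mathbf{s}_{\bm i}^{\rm mut}})$. This is immediate from \cref{t:NO_body_parametrized_by_seeds_NZ} (2), which gives $\Psi_{\bm i}(\mathcal{B}(\lambda)) = \{\widetilde{\Upsilon}_{\bm i}(g_{\mathbf{s}_{\bm i}^{\rm mut}}(b)) \mid b \in {\bf B}_{w_0}[\lambda]\}$ for each $\lambda \in P_+$, together with the definitions of $\widetilde{\mathscr{S}}_{\bm i}$ (obtained from that of $\mathscr{S}_{\bm i}$ by replacing $\Phi_{\bm i}$ with $\Psi_{\bm i}$) and of $\mathscr{S}_{\mathbf{s}_t}$. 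Next I would apply the proposition immediately preceding \cref{t:main_result_semi-toric} to the seed $\mathbf{s}_{\bm i}^{\rm mut}$: it asserts that $\mathscr{C}_{\mathbf{s}_{\bm i}^{\rm mut}}$ is a rational convex polyhedral cone and that $\mathscr{C}_{\mathbf{s}_{\bm i}^{\rm mut}} \cap (P_+ \times \z^N) = \mathscr{S}_{\mathbf{s}_{\bm i}^{\rm mut}}$. Since $\Theta$ is a linear automorphism, it carries the smallest real closed cone containing a subset to the smallest real closed cone containing its image; hence $\widetilde{\mathscr{C}}_{\bm i} = \Theta(\mathscr{C}_{\mathbf{s}_{\bm i}^{\rm mut}})$, which, being the image of a rational convex polyhedral cone under a lattice automorphism, is again a rational convex polyhedral cone. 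Intersecting with $P_+ \times \z^N$ and using $\Theta(P_+ \times \z^N) = P_+ \times \z^N$ together with the injectivity of $\Theta$, I obtain
\[
\widetilde{\mathscr{C}}_{\bm i} \cap (P_+ \times \z^N) = \Theta\big(\mathscr{C}_{\mathbf{s}_{\bm i}^{\rm mut}} \cap (P_+ \times \z^N)\big) = \Theta(\mathscr{S}_{\mathbf{s}_{\bm i}^{\rm mut}}) = \widetilde{\mathscr{S}}_{\bm i},
\]
which finishes the argument.

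I do not expect a genuine obstacle here: the whole proof is a transport of structure along a unimodular change of coordinates, and every substantive ingredient — the polyhedrality and the saturation-type identity for $\mathscr{C}_{\mathbf{s}_t}$, and the extended $g$-vector description of $\Psi_{\bm i}$ in \cref{t:NO_body_parametrized_by_seeds_NZ} — is already in place. The only points requiring a little care are bookkeeping: verifying that $\mathbf{s}_{\bm i}^{\rm mut}$ genuinely occurs as a seed of the fixed cluster pattern (it does, by \cref{t:NO_body_parametrized_by_seeds_NZ} (1)), so that the cited proposition applies, and noting that $\Theta$ preserves $P_+ \times \z^N$ precisely because $\widetilde{\Upsilon}_{\bm i}$ is unimodular and acts only on the $\r^N$-factor. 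Should one prefer to avoid this reduction, an alternative is to repeat the argument used for the string-polytope cone $\mathscr{C}_{\bm i}$ (cited from \cite{BZ, Lit}) with $\Phi_{\bm i}$ replaced throughout by $\Psi_{\bm i}$; but the route via $\widetilde{\Upsilon}_{\bm i}$ is shorter and avoids re-deriving convexity.
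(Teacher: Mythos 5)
Your argument is correct, but it takes a different route from the paper: the paper offers no in-text proof at all and simply cites a direct crystal-theoretic result (\cite[Proposition 6.7]{Fuj}), where the cone generated by the Kashiwara-embedding data $\Psi_{\bm i}$ is shown to be rational polyhedral and saturated by working with the polyhedral realizations themselves, exactly parallel to the string-polytope statement quoted from \cite{BZ, Lit}. You instead deduce the statement from results already recorded in this paper: the proposition preceding \cref{t:main_result_semi-toric} applied to the seed $\mathbf{s}_{\bm i}^{\rm mut}$ (legitimate, since \cref{t:NO_body_parametrized_by_seeds_NZ} (1) places $\mathbf{s}_{\bm i}^{\rm mut}$ in the cluster pattern), together with the identity $\widetilde{\mathscr{S}}_{\bm i} = ({\rm id} \times \widetilde{\Upsilon}_{\bm i})(\mathscr{S}_{\mathbf{s}_{\bm i}^{\rm mut}})$ coming from \cref{t:NO_body_parametrized_by_seeds_NZ} (2); the transport of polyhedrality and of the lattice-point equality along the automorphism $\Theta = {\rm id}_{P_\r} \times \widetilde{\Upsilon}_{\bm i}$ is carried out correctly, including the point that $\Theta$ preserves $P_+ \times \z^N$ because $\widetilde{\Upsilon}_{\bm i}$ is unimodular and acts only on the second factor. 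What your route buys is economy inside this paper (no new combinatorics, everything follows from the cluster cone proposition), and it mirrors how the paper itself later transports \cref{t:main_result_semi-toric} to the Nakashima--Zelevinsky setting via \cref{p:g-vectors_for_Richardson_NZ}. What the paper's citation buys is logical independence: the results of \cite{FO2} underlying \cref{t:NO_body_parametrized_by_seeds_NZ} are themselves built on the polyhedral-realization theory of \cite{FN, Fuj}, so the cited proposition is the more primitive statement, whereas your derivation, while sound as a consequence of the theorems quoted in this paper, reproves it from machinery that ultimately sits on top of it.
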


In a way similar to the proof of \cref{p:g-vectors_for_Richardson}, we obtain the following.

\begin{prop}\label{p:g-vectors_for_Richardson_NZ}
Let ${\bm i} \in R(w_0)$, $\lambda \in P_+$, and $v, w \in W$ such that $v \leq w$.
Then it holds that
\begin{align*}
\Psi_{\bm i}\left(\mathcal{B}(\lambda) \setminus (\mathcal{B}_w(\lambda) \cap \mathcal{B}^v(\lambda))\right) &= \{\widetilde{\Upsilon}_{\bm i} (g_{\mathbf{s}_{\bm i}^{\rm mut}} (b)) \mid b \in {\bf B}_{w_0} [\lambda] \setminus {\bf B}_{w_0} [\lambda, X_w^v]\},\\
\Psi_{\bm i}(\mathcal{B}_w(\lambda) \cap \mathcal{B}^v(\lambda)) &= \{\widetilde{\Upsilon}_{\bm i} (g_{\mathbf{s}_{\bm i}^{\rm mut}} (b)) \mid b \in {\bf B}_{w_0} [\lambda, X_w^v]\}.
\end{align*}
\end{prop}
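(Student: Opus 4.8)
The plan is to repeat the argument in the proof of \cref{p:g-vectors_for_Richardson} almost verbatim, replacing each string-theoretic ingredient by its Nakashima--Zelevinsky counterpart: the reduced-word valuation $\tilde{v}_{\bm i}^{\rm low}$ on $\mathbb{C}(G/B)$ realizing $\Phi_{\bm i}$ is replaced by the valuation $\tilde{v}_{\bm i}$ on $\mathbb{C}(G/B)$ realizing the Kashiwara embedding $\Psi_{\bm i}$ that is used in \cite{FN, Fuj} to present $\widetilde{\Delta}_{\bm i}(\lambda)$ as a Newton--Okounkov body; the unimodular transformation $\Upsilon_{\bm i}$ and the FZ-seed $\mathbf{s}_{\bm i}$ are replaced by $\widetilde{\Upsilon}_{\bm i}$ and the mutated FZ-seed $\mathbf{s}_{\bm i}^{\rm mut}$ of \cref{t:NO_body_parametrized_by_seeds_NZ}; and the inputs \cite[Proposition 3.28]{FO} and \cite[Theorem 6.5]{FO2} are replaced by \cite[Proposition 3.29]{FO} and \cite[Theorem 6.24]{FO2}.

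First I would recall the positivity basis $\{\Xi^{\rm up}_\lambda(b) \mid b \in \mathcal{B}(\lambda)\}$ of $H^0(G/B, \mathcal{L}_\lambda)$ from \cite{FO}, and note, exactly as in the proof of \cref{p:g-vectors_for_Richardson}, that by \cite[Proposition 4.4]{FO} together with the symmetric argument using $B^-$ in place of $B$, this basis is compatible with opposite Schubert varieties $X^v$ and hence with Richardson varieties $X_w^v$: the set $\{\pi_w^v(\Xi^{\rm up}_\lambda(b)) \mid b \in \mathcal{B}_w(\lambda) \cap \mathcal{B}^v(\lambda)\}$ forms a $\mathbb{C}$-basis of $H^0(X_w^v, \mathcal{L}_\lambda)$, while $\pi_w^v(\Xi^{\rm up}_\lambda(b)) = 0$ for $b \in \mathcal{B}(\lambda) \setminus (\mathcal{B}_w(\lambda) \cap \mathcal{B}^v(\lambda))$. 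Next, using the Nakashima--Zelevinsky analog of \cite[Proposition 3.28 and Corollaries 3.20 (2), 5.2]{FO} (with Proposition 3.29 in place of Proposition 3.28), which gives $\tilde{v}_{\bm i}(\iota_\lambda(\Xi^{\rm up}_\lambda(b))) = \Psi_{\bm i}(b)$ for $b \in \mathcal{B}(\lambda)$, together with \cref{p:property_valuation} applied to the kernel $I_w^v(\lambda) \coloneqq \ker(\pi_w^v)$, I obtain
\[\Psi_{\bm i}\left(\mathcal{B}(\lambda) \setminus (\mathcal{B}_w(\lambda) \cap \mathcal{B}^v(\lambda))\right) = \{\tilde{v}_{\bm i}(\iota_\lambda(\sigma)) \mid \sigma \in I_w^v(\lambda) \setminus \{0\}\}.\]
Then \cite[Theorem 6.24]{FO2} yields $\tilde{v}_{\bm i}(\iota_\lambda(\sigma)) = \widetilde{\Upsilon}_{\bm i}(v_{\mathbf{s}_{\bm i}^{\rm mut}}(\iota_\lambda(\sigma)))$ for $\sigma \in I_w^v(\lambda) \setminus \{0\}$, and since ${\bf B}_{w_0}[\lambda] \setminus {\bf B}_{w_0}[\lambda, X_w^v]$ is a $\mathbb{C}$-basis of $\iota_\lambda(I_w^v(\lambda))$ by property ${\rm (T)_5}$ and $v_{\mathbf{s}_{\bm i}^{\rm mut}}(b) = g_{\mathbf{s}_{\bm i}^{\rm mut}}(b)$ by property ${\rm (T)_1}$ and \eqref{eq:g-vector_valuation}, property ${\rm (T)_2}$ and \cref{p:property_valuation} give the first displayed equality of the proposition. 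The second equality then follows from the first, because $\widetilde{\Upsilon}_{\bm i}$ restricts to a bijection from $\{g_{\mathbf{s}_{\bm i}^{\rm mut}}(b) \mid b \in {\bf B}_{w_0}[\lambda]\}$ onto $\Psi_{\bm i}(\mathcal{B}(\lambda))$ by \cref{t:NO_body_parametrized_by_seeds_NZ}(2).

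The only real point to check — and it is mild, since everything is parallel to the string case — is that the chain of references in \cite{FO, FO2} admits Nakashima--Zelevinsky versions of exactly the shape used above: that the single positivity basis $\{\Xi^{\rm up}_\lambda(b)\}$ (which does not depend on the reduced word) computes $\Psi_{\bm i}$ under the relevant valuation after passing to $\mathbf{s}_{\bm i}^{\rm mut}$, and that its compatibility with opposite Schubert varieties follows from \cite[Proposition 4.4]{FO} by the argument symmetric in $B$ and $B^-$. Once these are granted, the remainder is a formal transcription of the proof of \cref{p:g-vectors_for_Richardson}.
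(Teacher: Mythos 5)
Your proposal is correct and takes exactly the approach the paper intends: the paper's own "proof" of this proposition is the single sentence "In a way similar to the proof of \cref{p:g-vectors_for_Richardson}, we obtain the following," and you have carried out precisely that transcription, with the right substitutions ($\Phi_{\bm i}\mapsto\Psi_{\bm i}$, $\Upsilon_{\bm i}\mapsto\widetilde{\Upsilon}_{\bm i}$, $\mathbf{s}_{\bm i}\mapsto\mathbf{s}_{\bm i}^{\rm mut}$, \cite[Proposition 3.28]{FO}$\mapsto$\cite[Proposition 3.29]{FO}, \cite[Theorem 6.5]{FO2}$\mapsto$\cite[Theorem 6.24]{FO2}, and \cref{t:NO_body_parametrized_by_seeds_string}(2)$\mapsto$\cref{t:NO_body_parametrized_by_seeds_NZ}(2)) and with the same positivity basis $\{\Xi^{\rm up}_\lambda(b)\}$ and its Richardson-compatibility via \cite[Proposition 4.4]{FO}.
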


Combining \cref{t:main_result_semi-toric} with \cref{p:g-vectors_for_Richardson_NZ}, we deduce the following.

\begin{thm}
Let ${\bm i} \in R(w_0)$, and $v, w \in W$ such that $v \leq w$.
\begin{enumerate}
\item[{\rm (1)}] The set $\widetilde{\mathscr{C}}_{\bm i}(X_w^v)$ is a union of faces of $\widetilde{\mathscr{C}}_{\bm i}$, and the following equality holds:
\begin{align*}
\widetilde{\mathscr{C}}_{\bm i}(X_w^v) \cap (P_+ \times \z^N) &= \widetilde{\mathscr{S}}_{\bm i}(X_w^v).
\end{align*}
\item[{\rm (2)}] For $\lambda \in P_+$, the set $\widetilde{\Delta}_{\bm i}(\lambda, X_w^v)$ is a union of faces of $\widetilde{\Delta}_{\bm i}(\lambda)$.
\item[{\rm (3)}] For $\lambda \in P_{++}$, the Richardson variety $X_w^v$ degenerates to the union of irreducible closed toric subvarieties of $X(\widetilde{\Delta}_{\bm i}(\lambda))$ corresponding to the faces of $\widetilde{\Delta}_{\bm i}(\lambda, X_w^v)$.
\end{enumerate}
\end{thm}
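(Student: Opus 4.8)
The plan is to deduce the theorem from \cref{t:main_result_semi-toric} by transporting everything along the unimodular transformation $\widetilde{\Upsilon}_{\bm i}$, exactly in the spirit of the remark following \cref{p:g-vectors_for_Richardson}, which treats the string-polytope FZ-seed $\mathbf{s}_{\bm i}$. Fix ${\bm i} \in R(w_0)$, let $\mathbf{s}_{\bm i}^{\rm mut} \in \mathcal{S}$ and the unimodular $J \times J$ matrix $N_{\bm i}$ be as in \cref{t:NO_body_parametrized_by_seeds_NZ} (recall $|J| = N$), and choose $t_{\bm i} \in \mathbb{T}$ with ${\mathbf s}_{t_{\bm i}} = \mathbf{s}_{\bm i}^{\rm mut}$; then $g_{\mathbf{s}_{\bm i}^{\rm mut}} = g_{{\mathbf s}_{t_{\bm i}}}$, and $\mathscr{C}_{\mathbf{s}_{\bm i}^{\rm mut}} = \mathscr{C}_{{\mathbf s}_{t_{\bm i}}}$, $\mathscr{C}_{\mathbf{s}_{\bm i}^{\rm mut}}(X_w^v) = \mathscr{C}_{{\mathbf s}_{t_{\bm i}}}(X_w^v)$, and so on.

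First I would introduce the linear unimodular automorphism $\Theta_{\bm i} \colon P_\r \times \r^N \rightarrow P_\r \times \r^N$, $(\mu, {\bm a}) \mapsto (\mu, \widetilde{\Upsilon}_{\bm i}({\bm a})) = (\mu, {\bm a} \cdot N_{\bm i})$. Since $N_{\bm i}$ is unimodular, $\Theta_{\bm i}$ preserves the lattice $P \times \z^N$ and the cone $(\sum_{i \in I} \r_{\geq 0} \varpi_i) \times \r^N$, and it carries a face of any rational convex polyhedral cone to a face of its image. Combining \cref{t:NO_body_parametrized_by_seeds_NZ} (2) with the definitions of $\widetilde{\mathscr{S}}_{\bm i}$ and $\widetilde{\mathscr{C}}_{\bm i}$, one checks $\widetilde{\mathscr{S}}_{\bm i} = \Theta_{\bm i}(\mathscr{S}_{{\mathbf s}_{t_{\bm i}}})$ and hence $\widetilde{\mathscr{C}}_{\bm i} = \Theta_{\bm i}(\mathscr{C}_{{\mathbf s}_{t_{\bm i}}})$; similarly, using the second equality of \cref{p:g-vectors_for_Richardson_NZ}, $\widetilde{\mathscr{S}}_{\bm i}(X_w^v) = \Theta_{\bm i}(\mathscr{S}_{{\mathbf s}_{t_{\bm i}}}(X_w^v))$ and $\widetilde{\mathscr{C}}_{\bm i}(X_w^v) = \Theta_{\bm i}(\mathscr{C}_{{\mathbf s}_{t_{\bm i}}}(X_w^v))$. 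Part (1) is then immediate from \cref{t:main_result_semi-toric} (1): applying $\Theta_{\bm i}$ to the statement that $\mathscr{C}_{{\mathbf s}_{t_{\bm i}}}(X_w^v)$ is a union of faces of $\mathscr{C}_{{\mathbf s}_{t_{\bm i}}}$ shows $\widetilde{\mathscr{C}}_{\bm i}(X_w^v)$ is a union of faces of $\widetilde{\mathscr{C}}_{\bm i}$, and applying $\Theta_{\bm i}$ to $\mathscr{C}_{{\mathbf s}_{t_{\bm i}}}(X_w^v) \cap (P_+ \times \z^N) = \mathscr{S}_{{\mathbf s}_{t_{\bm i}}}(X_w^v)$ together with $\Theta_{\bm i}(P_+ \times \z^N) = P_+ \times \z^N$ yields the asserted lattice-point identity.

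For part (2), intersecting the face decomposition of part (1) with $\pi_1^{-1}(\lambda)$ and projecting by $\pi_2$ gives that $\widetilde{\Delta}_{\bm i}(\lambda, X_w^v)$ is a union of faces of $\widetilde{\Delta}_{\bm i}(\lambda)$, using $\widetilde{\Delta}_{\bm i}(\lambda) = \widetilde{\Upsilon}_{\bm i}(\Delta_{{\mathbf s}_{t_{\bm i}}}(\lambda))$ and $\widetilde{\Delta}_{\bm i}(\lambda, X_w^v) = \widetilde{\Upsilon}_{\bm i}(\Delta_{{\mathbf s}_{t_{\bm i}}}(\lambda, X_w^v))$ so that faces correspond to faces under $\widetilde{\Upsilon}_{\bm i}$; alternatively this can be copied verbatim from the deduction of part (2) from part (1) in the proof of \cref{t:main_result_semi-toric}. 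For part (3), I would use that a unimodular transformation of rational convex polytopes induces an isomorphism of the associated projective toric varieties sending the torus-orbit closure attached to a face to the one attached to the image face; hence the semi-toric degeneration of $X_w^v$ inside $X(\Delta_{{\mathbf s}_{t_{\bm i}}}(\lambda))$ furnished by \cref{t:main_result_semi-toric} (3) transports to the required semi-toric degeneration inside $X(\widetilde{\Delta}_{\bm i}(\lambda))$.

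The main — and rather mild — point to be careful about is the simultaneous compatibility: the single matrix $N_{\bm i}$ must intertwine the ambient cone $\mathscr{C}$, its face lattice, the lattice-point semigroup, the sliced polytopes $\Delta(\lambda)$ for all $\lambda$, the Richardson sub-data for all $(v, w)$, and the toric varieties with their orbit stratifications. Because $N_{\bm i}$ depends neither on $\lambda$ nor on $(v, w)$, and because both \cref{t:NO_body_parametrized_by_seeds_NZ} and \cref{p:g-vectors_for_Richardson_NZ} are stated with respect to this same $N_{\bm i}$, this compatibility is automatic, and no geometric input beyond \cref{t:main_result_semi-toric}, \cref{t:NO_body_parametrized_by_seeds_NZ}, and \cref{p:g-vectors_for_Richardson_NZ} is needed.
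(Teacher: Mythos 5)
Your proof is correct and takes essentially the same route as the paper: the paper deduces this theorem in a single sentence by ``combining'' \cref{t:main_result_semi-toric} with \cref{p:g-vectors_for_Richardson_NZ} via the unimodular map $\widetilde{\Upsilon}_{\bm i}$ from \cref{t:NO_body_parametrized_by_seeds_NZ}, and your argument just spells out explicitly how the face lattice, lattice points, sliced polytopes, and toric orbit stratifications transport along the induced automorphism $\Theta_{\bm i}$ of $P_\r \times \r^N$.
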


\bibliographystyle{jplain} 
\def\cprime{$'$} 

\end{document}